\documentclass[11pt]{amsart}
\usepackage{amsbsy,amssymb,amscd,amsfonts,latexsym,amstext,delarray,
amsmath,graphicx,color,caption}

\usepackage[percent]{overpic} 
\usepackage{tikz}
\usepackage{subcaption}

\usepackage{hyperref}
\hypersetup{
  colorlinks=true,
  linktoc=all,    
  linkcolor=black,
}

\usepackage{aliascnt} 
\usepackage{xparse}
\usepackage[capitalise,nameinlink]{cleveref} 

\let\oldtheorem\newtheorem
\RenewDocumentCommand{\newtheorem}{s m o m O{}}{%
\IfBooleanTF{#1}%
{\oldtheorem{#2}{#4}}%
{\IfNoValueTF{#3}{\oldtheorem{#2}{#4}[#5]}%
{\newaliascnt{#2}{#3}%
\oldtheorem{#2}[#2]{#4}%
\aliascntresetthe{#2}}}}

\usepackage{epsfig}
\input xy

\xyoption{all}
\def\C{{\mathbb C}}

\def\R{\mathbb{R}}
\def\Z{\mathbb{Z}}

\def\a{\alpha}

\def\lam{\lambda}

\def\bi{{\bf i}}

\def\cA{\mathcal A}

\def\cC{\mathcal C}

\def\cF{\mathcal F}
\def\cG{\mathcal G}

\newtheorem{theorem}{Theorem}[section]
 
\newtheorem{prop}[theorem]{Proposition}

\newtheorem{lemma}[theorem]{Lemma}
\newtheorem{definition}[theorem]{Definition}
\newtheorem{corollary}[theorem]{Corollary}

\numberwithin{table}{section}
\numberwithin{figure}{section}

\theoremstyle{remark}
\newtheorem{remark}[theorem]{Remark}

\providecommand{\abs}[1]{\left\lvert#1\right\rvert}
\providecommand{\cyc}[1]{\left\langle#1\right\rangle}

\providecommand{\set}[1]{\left\lbrace#1\right\rbrace}

\DeclareMathOperator{\Perm}{Perm}

\DeclareMathOperator{\Isom}{Isom}

\DeclareMathOperator{\Aut}{Aut}

\DeclareMathOperator{\PSL}{PSL}

\DeclareMathOperator{\matrixO}{O}

\DeclareMathOperator{\rk}{rk}
\DeclareMathOperator{\supp}{supp}

\DeclareMathOperator{\cone}{cone}

\DeclareMathOperator{\height}{ht}

\newcommand{\Waff}[1][]{\ensuremath{W^{\mathrm{aff}}_{#1}}}

\newcommand\restr[2]{{
  \left.\kern-\nulldelimiterspace 
  #1 
  \vphantom{\big|} 
  \right|_{#2} 
  }}

\newcommand\suchthat{ \mathrel{}\middle| \mathrel{}}

\renewcommand{\bold}[1]{\medskip \noindent {\bf #1 }\nopagebreak}

\begin{document}

\title{\bf{Piecewise isometry groups of Euclidean tessellations}}

\author{Robert Bieri, Alex Feingold, and Daniel Studenmund}

\date{30 July 2026}

\address{Department of Mathematics and Statistics, The State University of New York, Binghamton, New York 13902-6000}
\email{rbieri@binghamton.edu}
\email{feingold@binghamton.edu}
\email{dstudenm@binghamton.edu}

\begin{abstract}
  Given a tessellation of Euclidean or hyperbolic space, the piecewise
  isometry group is the group whose elements are given by cutting
  space into finitely many tessellated convex subsets and gluing them
  back together. Groups of piecewise isometries of tessellations
  generalize Houghton's groups and Thompson's group $V$, and for
  cubical tessellations were studied by Bieri and Sach. We prove
  structure results about groups of piecewise isometries of
  sufficiently nice tessellations of Euclidean space, such as
  tessellations associated to crystallographic root systems, in
  particular proving that they are elementary amenable. Future work in
  progress will prove finite generation and higher finiteness
  properties.
\end{abstract}

\maketitle


\section{Introduction}\label{intro}

Let $V$ be a finite-dimensional Euclidean or hyperbolic space, and
$\Omega$ a collection of codimension-1 totally geodesic subspaces in
$V$ such that $Isom(V,\Omega)$, the group of isometries of $V$
preserving $\Omega$, acts properly discontinuously on $V$ with finite
covolume and acts on $\Omega$ with finitely many orbits. Cutting $V$
along hyperplanes of $\Omega$ determines a tessellation $\Delta$ of
$V$. The piecewise isometry group $PI(\Delta)$ consists of
transformations $V\to V$ defined by cutting $V$ into finitely many
convex pieces along hyperplanes in $\Omega$ and pasting those pieces
by isometries bijectively onto the pieces of another such
decomposition of $V$.

The motivation of our project was triggered by the simple (but then
surprising) unifying observation of Bieri and Sach \cite{BS22}: The
piecewise isometric groups $PI(\Delta)$ constitute a geometrically
attractive and suggestive environment which hosts both Thompson's
groups and Houghton's groups. These two classes groups, which were
around but alien to one another for more than half a century, were now
prominently located in the low dimensional core of a new attractive
common homestead:
\begin{enumerate}
\item If $\Delta$ is the (unique) regular tessellation of the line,
  then the end-preserving subgroup of $PI(\Delta)$ quotiented by
  isometries of individual tiles is Houghton's group $H_2$. The higher
  Houghton group $H_n$ occur similarly when the tessellated line
  $\Delta$ is replaced by the disjoint union of $n$ parallel tessellated
  rays.
\item If $\Delta$ is the (unique) regular tessellation of the
  hyperbolic plane by ideal simplices, one observes that all pieces of
  finite decompositions are either half-spaces or finite
  tile-clusters, and the decompositions are partially ordered along a
  tree. Then the quotient of $PI(\Delta)$ by the subgroup of elements
  supported on finitely many triangles is Thompson's group $V$. This
  observation seems to be folklore; see Bieri--Sach
  \cite[\S2.6]{BS22}.
\end{enumerate}

Thompson's groups were the starting point of a whole new development
in group theory with an extended literature. This includes work based
on their faithful piecewise isometric action on the hyperbolic plane
with applications to (asymptotically rigid) mapping class groups of
surfaces \cite{Penner, FKS12, GLU22, ABKL24}.

In this paper, we consider the case that $V$ is a Euclidean space,
and $\Delta$ is cut out by finitely many families of parallel
hyperplanes such that the group of isometries preserving $\Delta$ acts
properly discontinuously and cocompactly. The most important examples
of tessellations satisfying these hypotheses are given by those
associated to crystallographic root systems \cite{Hum}, where the
isometry group of the tessellation is virtually an affine Weyl
group. Not all such tessellations arise directly from root systems:
for example, the trihexagonal tiling of the plane, also called the
kagome lattice. Our main theorem generalizes a result of Bieri and
Sach \cite{BS22}.

\begin{theorem} \label{thm:elem-amen} Let $\Delta$ be a tessellation
  of Euclidean space as above. Then $PI(\Delta)$ is elementary
  amenable.
\end{theorem}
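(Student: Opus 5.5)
We plan to prove \cref{thm:elem-amen} by building a finite chain of normal subgroups of $PI(\Delta)$ whose successive quotients are elementary amenable: finite groups, abelian groups, or directed unions of such. Elementary amenability is closed under extensions and directed unions, so this suffices. The guiding picture is the Bieri--Sach argument for the cubical case. An element of $PI(\Delta)$ is determined by a finite convex decomposition of $V$ along hyperplanes of $\Omega$. Far from the origin, each piece looks like a cone, or more precisely a polyhedral set whose recession cone is spanned by directions compatible with the finitely many parallel families. Because there are finitely many families, there are finitely many possible recession cones and linear parts of the isometries involved.

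\textbf{Step 1 (the bounded part).} Let $F\le PI(\Delta)$ be the normal subgroup of elements that agree with a single isometry of $V$ outside a bounded set. Write $F_0$ for the subgroup of elements supported on finitely many tiles. Then $F_0$ is a directed union of finite groups. Each such finite group consists of permutations of finitely many tiles, combined with tile symmetries where the tiling permits them. Hence $F_0$ is locally finite and elementary amenable. The quotient $F/F_0$ embeds into a subgroup of $\Isom(V,\Omega)$. That group is crystallographic, hence virtually abelian, so elementary amenable. Thus $F$ is elementary amenable.

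\textbf{Step 2 (germs at infinity).} Next, pass to the action on ``directions at infinity''. Consider the finite set of linear parts, which lie in the finite point group of $\Isom(V,\Omega)$. Consider also the finitely many rational faces of the fan cut out on the sphere at infinity by the hyperplane directions. By induction on the dimension of the faces, define a homomorphism recording, for each top-dimensional cone of a sufficiently fine fan, the linear part and translational germ of the element there. Deep inside a cone $C$, an element acts as $x\mapsto Ax+t$. The first task is to show that the set of cones and the permutation of germs they induce give a map to a finite group. Modulo that, the remaining data are translations, which contribute abelian groups, or groups of the same type in lower dimension: on a face of codimension $k$ the transverse data form a piecewise isometry group of a tessellation of lower dimension, up to finite index. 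So we would set up an induction on $\dim V$, proving the stronger statement for disjoint unions of finitely many tessellated polyhedral pieces, as Houghton's $H_n$ suggests. The dimension-zero base case is the trivial group.

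\textbf{Main obstacle.} The hard step is making the ``germ at infinity'' homomorphism well defined and showing that its kernel is exactly $F$, or is controlled by lower-dimensional groups. The specific difficulty is proving that any two convex decompositions have a common refinement whose unbounded pieces, outside a compact set, are products of a cone with a lower-dimensional tessellated piece. For arbitrary hyperplane families this is plausible but needs care. Our first attempt will use the finiteness of the families to refine every decomposition into one governed by a fixed complete fan $\Sigma$ defined by all hyperplane directions. Here we use that every convex tessellated set agrees, outside a compact set, with a union of translates of cones of $\Sigma$ intersected with slabs. We would then verify that each element permutes these cone-germs by finitely many linear maps. Once this structure lemma is in place, the extension argument is formal.
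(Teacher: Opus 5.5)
Your proposal has a genuine gap, and one of its concrete claims is false. The subgroup $F$ of elements that agree with a single isometry outside a bounded set is not normal once $\dim V\ge 2$. Take the square tessellation of $\R^2$. Let $h$ be the identity on $\{y\le 0\}$ and translation by $(1,0)$ on $\{y\ge 0\}$, and let $f\in F$ be translation by $(0,1)$. The conjugate of $f$ by $h$, in either convention, is translation by $(0,1)$ off the strip $\R\times[-1,0]$ and translation by $(\pm1,1)$ on it, so it is not in $F$.

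For the same reason, the kernel of your top-cone homomorphism cannot be $F$. It is exactly the group of elements whose support has rank at most $\dim V-1$ (the paper's $G_{\dim V-1}$, see \cref{lem:Grform}). That group contains, for example, the map translating the strip $\R\times[0,1]$ by $(1,0)$ and fixing everything else, which is not in $F$. Only $F_0$ (the paper's $G_0$) survives as a term of a normal series, and your argument that it is locally finite is fine. Everything therefore rests on the alternative you leave open, that the kernel is ``controlled by lower-dimensional groups''. For that alternative you give neither the map nor its kernel.

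That alternative is where the missing idea lies, and the picture you suggest for it is not accurate. Elements of $G_{\dim V-1}$ permute only finitely many germs of the next rank down (half-strip germs in the planar example), with finitely many possible linear parts. What remains after that is translation along germs. So the transverse data are finitary, not a piecewise isometry group of a lower-dimensional tessellation.

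The paper needs no induction on $\dim V$. It does the following:
\begin{itemize}
\item It defines germs in every rank: irreducible convex polyhedral sets modulo commensurability. The action on germs is made well defined by canonical translations (\cref{lem:translation-inside}).
\item It uses well-definedness of height (\cref{prop:height-well-def}) to show that an element of rank at most $r$ moves only finitely many rank-$r$ germs.
\item The crucial input, absent from your plan, is \cref{lem:firstfiltration}: an element fixing all rank-$r$ germs has support of rank at most $r$. Fixing them with trivial linear and translational parts forces support of rank at most $r-1$ (\cref{lem:Grform}).
\end{itemize}
Your top-level argument is then iterated down the ranks inside $PI(\Delta)$ itself. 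This gives $G_r/C_r\hookrightarrow\Perm_f(\Gamma^r)$, $C_r/C_r^+\hookrightarrow\bigoplus_{\gamma\in\Gamma^r}\matrixO(V,\Xi)$ and $C_r^+/G_{r-1}\hookrightarrow\bigoplus_{\gamma\in\Gamma^r}T$.

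Your proposed structure lemma, refining decompositions into cone-times-slab pieces, corresponds to the paper's decomposition into irreducibles and alcoves. That is not where the difficulty lies.
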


Theorem \ref{thm:elem-amen} is proved by using the geometry of the
tessellation to define a normal series in $PI(\Delta)$ and analyzing
the successive quotients. Given a convex polyhedral set $C$ cut out by
hyperplanes in $\Omega$, define its limit set $L(C)$ to be the set of
unit vectors $u\in V$ such that $C$ contains a ray in the direction
$u$. Define the rank of $C$ to be one more than the dimension of its
limit set, and say that $C$ is irreducible if it cannot be decomposed
as a union of two convex polyhedral sets of equal rank with disjoint
interiors. Define a {\em germ} to be an equivalence class of
irreducible convex polyhedral sets, where two are equivalent if each
has the same rank as the interior of their intersection.

The piecewise isometry group $PI(\Delta)$ acts on the set of germs
$\Gamma$ preserving rank. We use this action to define normal
subgroups $C_r^+, C_r, G_r \leq PI(\Delta)$ for each
$r=0,\dotsc, \dim(V)$, where $C_r$ is subgroup fixing each germ of
rank $r$. The main theorem follows immediately from the following
summary of the results of \cref{sec:sequences}.

\begin{prop} \label{prop:introchain}
  There is a chain of normal subgroups of $PI(\Delta)$
  \[
    \{1\} = C_0^+ \leq C_0 \leq G_0 \leq C_1^+ \leq \dotsb \leq
    C_{\dim(V)} \leq G_{\dim(V)} = PI(\Delta)
  \]
  where each quotient $G_r/C_r$ or $C_r/C_r^+$ is locally finite and
  each quotient $C_r^+/G_{r-1}$ is abelian.
\end{prop}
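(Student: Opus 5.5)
The plan is to read the germ action off one rank at a time, from the top rank $\dim(V)$ down to rank $0$, and to split the action on germs of a fixed rank into three layers. The first layer is a permutation of finitely many limit-set types. The second is the finite linear part of an isometry. The third is a translation. Everything rests on one finiteness fact, which I would establish first: each $g\in PI(\Delta)$ is given by a finite decomposition into convex polyhedral pieces. Hence, for each rank $r$, $g$ moves only those rank-$r$ germs that are not contained in the interior of a single piece. Far from the finitely many cutting hyperplanes, $g$ acts on germs by a single isometry.

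I would define the subgroups as follows. Let $G_r$ be the set of $g$ fixing every germ of rank $>r$. Let $C_r\le G_r$ consist of those $g$ that send each rank-$r$ germ $\gamma$ to a germ with the same limit set $L(\gamma)$, and whose linear part on $\gamma$ is trivial. Let $C_r^+\le C_r$ consist of those $g\in G_r$ that fix every rank-$r$ germ up to translation, meaning $g$ acts on a neighbourhood of $\gamma$ by a translation. Normality of all three in $PI(\Delta)$ follows because they are defined by conditions on the rank-preserving action on $\Gamma$ that are invariant under conjugation.

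For the chain itself, $G_{r-1}\le C_r^+$ holds because fixing a rank-$r$ germ is the case of zero translation. We have $G_{\dim V}=PI(\Delta)$ because there are no germs of rank $>\dim V$. Finally, $C_0^+=\{1\}$: rank-$0$ germs are germs of bounded sets, hence essentially tiles, so an element of $C_0^+$ fixes all unbounded germs and translates tiles. Since a nonzero translation cannot fix the tile germs far away, $C_0^+$ is trivial. I would need to check this last point carefully against the precise definition of germ equivalence.

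Next come the three quotients.

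1. For $G_r/C_r$: an element $g\in G_r$ fixes all germs of rank $>r$. By the finiteness fact, $g$ then acts on rank-$r$ germs through a finite-support permutation of the limit-set types, together with a point-group element from the finite linear part of $\Isom(V,\Omega)$. Since $\Omega$ has finitely many parallel families, there are only finitely many directions at each rank. So $G_r/C_r$ embeds in a group of finitary permutations decorated by a finite group, and such a group is locally finite.
2. For $C_r/C_r^+$: the remaining data is a finitely supported assignment, germ by germ, of an element of the finite point group. Any finitely generated subgroup is therefore finite.
3. For $C_r^+/G_{r-1}$: I would define a homomorphism by recording, for each rank-$r$ germ class, its translation vector modulo the directions of its own limit set. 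This takes values in a direct sum of lattices, and its kernel is exactly $G_{r-1}$. Hence $C_r^+/G_{r-1}$ is abelian.

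I expect two obstacles. The main one is making the translation homomorphism in item 3 well defined and multiplicative. This requires showing that an element of $C_r^+$ acts near each rank-$r$ germ by a single translation, namely that $g$ and the identity agree on a convex set of full rank inside the germ. It also requires showing that translations along $L(\gamma)$ fix $\gamma$, so the vector is well defined modulo those directions. I would try to prove this with an irreducibility argument: a rank-$r$ germ cannot be split into two pieces of rank $r$ on which $g$ acts differently. The second obstacle is the local finiteness in item 1, where I must check that the support of $g$ on rank-$r$ germs is finite modulo translation classes, rather than merely "small".
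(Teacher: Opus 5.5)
There is a genuine gap, and it lies in item 3. It comes from how you split the data among your layers: layer 1 records only how limit sets are permuted, and layer 3 keeps the translation near $\gamma$ only modulo the span $W_\gamma$ of $L(\gamma)$.

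\emph{Strict reading.} Suppose elements of $C_r^+$ fix every rank-$r$ germ and act near it by a translation; this is the reading you need for $C_0^+=\{1\}$. Then your invariant is identically zero. At $r=\dim V$ this holds because $W_\gamma=V$. In general it holds because a translation fixing $[P]$ must preserve every minimal layer containing the thin set $P$, so its vector lies in $W_\gamma$. The kernel is therefore all of $C_r^+$, which is not $G_{r-1}$. The unit translation of the tessellated line lies in $C_1^+$ but not in $G_0$. Likewise the paper's $f_3\in C_2^+\setminus G_1$ for $A_2$ acts near $[P_\emptyset]$ by $\omega_1^\vee$, a vector in the recession cone of $P_\emptyset$. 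Translations along a germ's own directions (Houghton-type shifts) are exactly what $C_r^+/G_{r-1}$ detects. Your remark that the vector is ``well defined modulo those directions'' throws them away.

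\emph{Permissive reading.} Suppose instead elements of $C_r^+$ may carry $\gamma$ by a translation to a parallel germ. Then the recorded vectors satisfy only the cocycle rule $v_\gamma(gh)=v_\gamma(g)+v_{\gamma g}(h)$, and the quotient is not abelian. In the $A_2$ tessellation let $B=\{x : 0\le (x,\alpha_1)\le 1,\ (x,\alpha_2)\ge 0\}$ and $v=-\alpha_2\in T$. The elements swapping $B\leftrightarrow B+v$, resp.\ $B+v\leftrightarrow B+2v$, by translations lie in $G_1$ and have trivial linear part everywhere. They generate $S_3$ acting on three distinct rank-$1$ germs, so their commutator is not in $G_0$.

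What you need is the paper's arrangement. First, the permutation of the germs themselves, not of their limit sets, goes into the locally finite layer: $G_r\to\Perm_f(\Gamma^r)$, with kernel $C_r$, the set of elements fixing every rank-$r$ germ. This permutation is finitary because $g\in G_r$ has support of rank $\le r$, and such a support contains only finitely many rank-$r$ germs. Your general ``finiteness fact'' is false: a single piece moved by a nontrivial isometry moves all the infinitely many lower-rank germs it contains. For the same reason $G_r$ must mean ``identity near every germ of rank $>r$'', i.e.\ $\rk(\supp g)\le r$, not ``fixes every germ of rank $>r$'' under the germ action. Otherwise the unit translation of the line, which fixes both ends, would lie in $G_0$ and induce a non-finitary permutation of tiles.

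Second, linear parts give $C_r\to\bigoplus_{\gamma\in\Gamma^r}\matrixO(V,\Xi)$ with kernel $C_r^+$. Third, the abelian layer records the full translation in $T$, giving $C_r^+\to\bigoplus_{\gamma\in\Gamma^r}T$. This is a homomorphism because all rank-$r$ germs are now fixed. Its kernel is $G_{r-1}$, since trivial linear part and zero translation near a germ mean $g$ is the identity on a representative, and then \cref{lem:Grform} applies. Finally, if you define $C_r$ as all germ-fixers rather than placing it inside $G_r$ by fiat, you also need \cref{lem:firstfiltration}, $C_r\le G_r$, to close the chain.
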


Future work will build on this description to prove finiteness
properties of the group $PI(\Delta)$, further generalizing results of
Bieri and Sach.

\subsection{Examples}

Our intuition about piecewise isometries was built by studying
examples arising from crystallographic root systems. Such root systems
are products of irreducible root systems, which are completely
classified. The irreducible root system of type $A_1$ gives rise to a
piecewise isometry group of the real line whose quotient by the
locally finite normal subgroup $C_0$ (as defined above) is Houghton's
group $H_2$. The reducible root system of type $(A_1)^n$ gives rise to
the piecewise isometry group studied by Bieri and Sach \cite{BS22}.

In \cref{sec:examples}, we give an explicit description of the set
of germs for crystallographic tessellations. We include illustrations
for the root system of type $A_2$, which generates the tessellation of
the plane by equilateral triangles, and provide examples of piecewise
isometries lying at different terms in the chain of normal subgroups
in \cref{prop:introchain}. We also provide pictures of irreducible
convex polyhedral sets for the tessellation corresponding to the root
system of type $A_3$ in $\R^3$. As examples of tessellations which are
not coming from an affine Weyl group we discuss the 2D and 3D kagome
tessellations. We include many figures to illustrate the beautiful
geometry in these examples, and hope that they spark joy for the reader
and they did for us.

\subsection{Context}

Piecewise-defined maps have long been a source of interesting examples
of groups. For example, the regular tessellation of the hyperbolic
plane by ideal triangles has orientation-preserving isometry group
$\PSL_2(\Z)$. The subgroup of $\operatorname{Homeo}(S^1)$ composed of
elements which are piecewise-$\PSL_2(\Z)$ is isomorphic to Thompson's
group $T$, a result attributed to Thurston \cite{Greenberg90}. Imbert
shows moreover that both are isomorphic to Penner's universal Ptolemy
group \cite{Imbert97,Penner}, a group generated by edge flips of ideal
triangle tessellations of the hyperbolic plane equipped with a
distinguished oriented edge. An application of that example in physics
was found by Osborne and Stiegemann \cite{OS_2020}.

One way to generalize this is to realize $PSL(2,\Z)$ as the even
subgroup of the hyperbolic Weyl group,
$$PGL(2,\Z)\cong T(2,3,\infty)$$
of a rank $3$ hyperbolic Kac-Moody Lie algebra,
$\cF = AE3 = A_1^{++}$, studied by Feingold and Frenkel \cite{FeFr83}
in 1983. Weyl groups of two rank $4$ hyperbolic Kac-Moody Lie algebras
contain finite-index subgroups $PSL(2,E)$ and $PSL(2,\Z[\bi])$, where
$E = \Z[e^{\pi\bi/3}]$ is the ring of Eisenstein integers and
$\Z[\bi]$ is the ring of Gaussian integers.  These Weyl groups were
studied by Feingold-Kleinschmidt-Nicolai
\cite{FeKlNi09,FeKlNi09a,FeKlNi09b} in 2009 in relation to normed
division algebras, and by Feingold-Valli\`eres \cite{FV18} in 2017 in
relation to Clifford algebras.  They are arithmetic subgroups of
$PSL(2,\C)$ which act properly discontinuously on hyperbolic
$3$-space, $H^3$. One motivation of our current project is to study
piecewise isometry groups of the tessellations of $H^3$ determined by
these hyperbolic Weyl groups, giving us definitions of new groups
which could be called $PPSL(2,E)$ and $PPSL(2,\Z[\bi])$. Associated to
each ideal vertex of these hyperbolic tessellations is a Euclidean
tessellation of a sufficiently small horosphere, and the piecewise
isometry groups of the Euclidean $A_2$ and $B_2$ tessellations arise
as parabolic subgroups of the hyperbolic piecewise isometry group.

Cutting and pasting also appears in the context of Hilbert's third
problem, which asks whether, given two polyhedra of equal volume, one
can be cut into finitely many polyhedral pieces and reassembled to
form the second. While the question is long-settled, research into
cut-and-paste equivalence continues using the Zakharevich's language
of assemblers \cite{Zak17}. See \cref{rem:zakh} for a definition of our
piecewise isometry groups following Kupers et al.~\cite{Kup24}.


\section{Piecewise isometries} \label{sec:piecewiseisometries}

Group actions will be right actions, following much of the literature
of piecewise groups. When it will not cause confusion, elements of
groups that are naturally functions may be written on the left in
function notation.

\subsection{Tessellations} \label{sec:tessellations}

Let $(V, (\cdot, \cdot))$ be a finite-dimensional real inner product
space. Let $\Omega$ be a collection of affine hyperplanes in $V$ such
that $\Isom(V,\Omega)$, the group of isometries of $V$ preserving
$\Omega$, acts on $V$ properly discontinuously and cocompactly, and
acts on $\Omega$ with finitely many orbits. 
This determines a tessellation $\Delta$ of $V$. The open top-dimensional tiles are
connected components of $V - \bigcup_{H\in \Omega} H$, called regions
of the arrangement, and their facets are regions of intersection
subspaces. Then $\Delta$ is a locally finite polyhedral complex with
isometry group $\Isom(\Delta)$ acting cocompactly on
$V$. Tessellations formed in this way are the central object of our
study.

By assumption, $\Isom(\Delta)$ is a lattice in the Lie group
$\Isom(V)$, and so the translation subgroup $T\leq \Isom(\Delta)$ has
finite index. It follows then that $\Omega$ admits a decomposition as
a disjoint union $\Omega = \Omega_1 \sqcup \dotsb \sqcup \Omega_m$ of
finitely many families of parallel hyperplanes. Given a hyperplane
$H\in \Omega$, let $\Omega_H \subset \Omega$ denote its parallelism
class.

Let $\mathcal{H}$ be the collection of all closed half-spaces
$M \subset V$ bounded by hyperplanes $H\in \Omega$. Let
$\iota:\mathcal{H} \to \mathcal{H}$ be the involution sending each
half-space $M$ to the half-space $\iota(M) = \overline{V-M}$. The map
$\mathcal{H}\to \Omega$ mapping $M\mapsto \partial M$ is a 2-to-1
surjection.

By a {\em maximal nested family} in $\mathcal{H}$ we mean a maximal
subset $\xi \subset \mathcal{H}$ with the property that for any two
half-spaces $M,M'\in \xi$, either $M\subset M'$ or $M'\subset
M$. Each maximal nested family in $\mathcal{H}$ is of the form
\[
  \xi_M = \set{ M'\in \mathcal{H} \suchthat M\subset M' \text{ or
    }M'\subset M}
\]
for some $M\in \mathcal{H}$. Each maximal nested family is totally
ordered by containment, and the boundaries of its elements constitute
a parallelism class $\Omega_H$ for some $H\in \Omega$.

\begin{definition} \label{def:phi}
Let $\Xi$ be the collection of all maximal nested families in
$\mathcal{H}$. The involution $\iota: \mathcal{H} \to \mathcal{H}$
descends to an involution of $\Xi$, which by abuse of notation we
also denote by $\iota$.
\end{definition}

\subsection{Convex polyhedral sets}

The building blocks for piecewise isometries are convex polyhedral
sets. 

\begin{definition}
  A {\em convex polyhedral subset} of $V$ is a subset $P\subseteq V$
  that is either empty or is a set with nonempty interior equal to the
  intersection finitely many closed half-spaces bounded by hyperplanes
  $H\in \Omega$.
\end{definition}

The full space $V$ and the empty set $\emptyset$ are convex polyhedral
subsets. Note that a nonempty intersection of such half-spaces has
empty interior if and only if it is contained in some hyperplane
$H\in \Omega$. The intersection of two convex polyhedral sets may have
empty interior, and therefore not be convex polyhedral. To remedy
this, define the {\em essential intersection} of convex polyhedral
sets $P,Q$ as
\[
  P \doublecap  Q = \begin{cases} \emptyset & \text{ if $P\cap Q$ has
      empty interior}\\
    P\cap Q & \text{ else.} \end{cases}
\]
Say that $P$ and $Q$ are {\em essentially disjoint} if $P\doublecap Q
= \emptyset$.

\begin{definition}
  A {\em finite decomposition} of a subset $X\subseteq V$ is a finite
  collection of convex polyhedral subsets $P_1,\dotsc, P_m\subseteq X$
  whose union is $X$ and which are pairwise essentially disjoint. If
  $X\subseteq V$ admits a finite decomposition, call it a {\em
    polyhedral set}.
\end{definition}

Note that the collection of polyhedral sets is closed under
intersections, unions, and complements.

\subsection{The limit complex at infinity}

A {\em direction} in $V$ is a parallelism class of rays in $V$. Let
$\partial V$ be the set of directions in $V$, thought of as the sphere
at infinity. Each convex subset $C\subset V$ has a limit set $L(C)$,
the set of directions contained in $C$. For any hyperplane
$H\in \Omega$, its limit set $L(H) \subset \partial V$ is a great
subsphere. The collection of all sets $L(H)$ for $H\in\Omega$ is a
finite collection of great subspheres, one for each parallelism class
$\Omega_H$.

The collection of all $L(H)$ for $H\in \Omega$ gives $\partial V$ the
structure of a spherical cell complex. The open cells of top dimension
are the components of the deleted space
$\partial V - \bigcup_{H\in \Omega} L(H)$. The open cells of dimension
$k$ are top-dimensional cells of the subsphere arrangements induced in
$k$-dimensional subspheres $S^k \subset \partial V$ which are
intersections of limit sets $L(H)$. In particular, the vertices of the
complex are elements $v\in \partial V$ such that the antipodal pair
$\{v, -v\}$ is equal to the intersection of a finite collection of
sets $L(H)$ for $H\in \Omega$. The collection of all open cells,
including the $(-1)$-dimensional empty cell, will be written
$L(\Delta)$. This complex has the structure of the set of (co)vectors
of an oriented matroid \cite[\S1.3.4]{Anderson}.

Each maximal nested family $\xi\in \Xi$ has a well-defined limit
set $L(\xi)$, by definition equal to the closed hemisphere $L(M)$
for any $M\in \xi$. The intersection
$L(\xi) \cap L(\iota(\xi))$ is equal the boundary
$\partial L(\xi)$, which is equal to the great subsphere
$L(\partial M)$ where $\partial M$ is the boundary hyperplane of any
representative $M\in \xi$. The closure of any cell in $L(\Delta)$
is then equal to an intersection of hemispheres
$L(\xi_1),\dotsc, L(\xi_n)$ for some
$\xi_1,\dotsc,\xi_n \in \Xi$, and such an intersection
represents the closure of an open cell if and only if it is minimal
among all such intersections of its dimension.

If convex sets $C$ and $D$ have nonempty intersection, then
$L(C\cap D) = L(C)\cap L(D)$. In particular, if a nonempty convex
polyhedral set $P$ is the intersection of half-spaces
$M_1,\dotsc, M_n$, then $L(P)$ is the intersection of the closed
hemispheres $L(M_1),\dotsc, L(M_n)$. Therefore $L(P)$ has the
structure of a subcomplex of the spherical cell complex $L(\Delta)$.

Let $S(V)$ be the unit sphere in $V$. There is an identification
between $\partial V$ and $S(V)$ sending each parallelism class of rays
to the unit vector in its direction. Under this identification, for
any convex set $C\subset V$ the limit set $L(C) \subset \partial V$ is
identified with $\cone(C)\cap S(V)$, where $\cone(C)$ is the recession
cone of $C$. The {\em recession cone} of a convex set $C$ is the set
of vectors $v\in V$ such that $x+tv\in C$ for any $t>0$ and $x\in C$
(cf.~\cite[1.5]{Zieg}). The recession cone is a cone at the origin.

\subsection{Definition of piecewise isometries}

We now define the main object of our study, the group $G = PI(\Delta)$
of piecewise isometries of $V$ determined by $\Delta$. For the
remainder of this section, let $\Gamma$ denote a subgroup of
$\Isom(\Delta)$. When $\Gamma$ is omitted from notation we assume
$\Gamma = \Isom(\Delta)$.

\begin{definition}
  A {\em piecewise-$\Gamma$ embedding} is an injective function
  $f:X_1 \to X_2$ between polyhedral subsets $X_1, X_2 \subset V$ such
  that $X_1$ admits a finite decomposition by convex polyhedral sets
  $P_1,\dotsc, P_n$ such that the restriction of $f$ to the interior
  each $P_i$ extends to an element of $\Gamma$.
\end{definition}

If the restriction of $f$ to the interior of a convex polyhedral set
$P$ extends to an element of $\Gamma$, we say that $f$ is
{\em $\Gamma$-isometric on $P$}. When $\Gamma$ is the full isometry
group $\Isom(\Delta)$, say simply that $f$ is isometric on $P$.

\begin{lemma}\label{lem:piecebasics}
  Consider subsets $X_1,X_2 \subset V$, a function
  $f : X_1 \to X_2$, and a convex polyhedral subset $P \subset X_1$ on
  which $f$ is $\Gamma$-isometric.
  \begin{enumerate}
  \item $f(P) \subset V$ is a convex polyhedral subset.
  \item Given a finite decomposition $Q_1,\dotsc, Q_m \subset f(P)$ of
    $f(P)$, setting \mbox{$P_i = \restr{f}{P}^{-1}(Q_i)$} defines a
    finite decomposition of $P$ by convex polyhedral subsets
    $P_1,\dotsc, P_m \subset P$ on which $f$ is $\Gamma$-isometric.
  \end{enumerate}
\end{lemma}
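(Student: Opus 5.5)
Let $g\in\Gamma$ be the isometry extending $\restr{f}{\mathrm{int}(P)}$. Since $P$ is a convex polyhedral set, it is either empty, where everything is trivial, or it has nonempty interior. In the latter case $P$ is the closure of its interior, so $f$ and $g$ agree on the interior of $P$. The plan is to transport everything through the global isometry $g$, which preserves $\Omega$ and hence $\mathcal{H}$, and then pull back along $g^{-1}$, which also lies in $\Gamma$.

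For (1), write $P = M_1\cap\dotsb\cap M_n$ with $M_i\in\mathcal{H}$. Because $g\in\Isom(\Delta)$ permutes the hyperplanes of $\Omega$, it permutes $\mathcal{H}$. Hence $g(P)=g(M_1)\cap\dotsb\cap g(M_n)$ is an intersection of finitely many half-spaces in $\mathcal{H}$, and it has nonempty interior $g(\mathrm{int}\,P)$ since $g$ is a homeomorphism. So $g(P)$ is convex polyhedral. The remaining point is that $f(P)=g(P)$, which is what the statement means by $f(P)$. I will note that $f$ agrees with $g$ on $\mathrm{int}(P)$, so $f(\mathrm{int}\,P)=\mathrm{int}\,g(P)$. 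The convex polyhedral set of interest is $\overline{f(\mathrm{int} P)} = g(P)$. If $f$ is only assumed to agree with $g$ on the interior, this is the interpretation I will use. Boundary behaviour is measure-zero bookkeeping, consistent with the use of essential intersections throughout.

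For (2), set $P_i=g^{-1}(Q_i)$; on the interior this coincides with $\restr{f}{P}^{-1}(Q_i)$. Each $Q_i$ is convex polyhedral, so by the argument of (1) applied to $g^{-1}\in\Gamma$, each $P_i$ is convex polyhedral. The sets $P_i$ lie in $g^{-1}(f(P))=P$ and their union is $g^{-1}(\bigcup Q_i)=P$. Pairwise essential disjointness is preserved because $g^{-1}$ is a homeomorphism: $\mathrm{int}(P_i\cap P_j)=g^{-1}(\mathrm{int}(Q_i\cap Q_j))=\emptyset$. Finally, $f$ restricted to $\mathrm{int}(P_i)\subset \mathrm{int}(P)$ equals $g$, so $f$ is $\Gamma$-isometric on each $P_i$.

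The only genuine subtlety, and the step I expect to need care, is the boundary issue. $f$ is only known to agree with $g$ on $\mathrm{int}(P)$, so literal preimages under $f$ of boundary points might differ from those under $g$. I would handle this by defining $P_i$ as $g^{-1}(Q_i)$, equivalently as the closure of $\restr{f}{\mathrm{int} P}^{-1}(\mathrm{int}\, Q_i)$, and remarking that this is the intended meaning. All remaining steps are immediate from $g$ preserving $\mathcal{H}$.
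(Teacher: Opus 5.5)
Your argument is correct and is the routine verification the paper dismisses as ``Evident'': everything is transported through the isometry $g\in\Gamma$ extending $\restr{f}{\mathrm{int}\,P}$, which permutes $\mathcal{H}$, and part (2) follows by pulling back along $g^{-1}\in\Gamma$. Your boundary caveat is genuinely needed, because $f$ is only required to agree with $g$ on interiors. For example, on $\R$ with $\Omega=\Z$, the bijection swapping $[0,1)$ and $[1,2)$ by translations has $f([0,1])=[1,2)\cup\{0\}$, so reading $f(P)$ as $\overline{f(\mathrm{int}\,P)}=g(P)$, as you do, is the right interpretation and the one the paper tacitly uses.
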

\begin{proof}
  Evident.
\end{proof}

\begin{lemma} \label{lem:compose}
  The composition of two piecewise-$\Gamma$ embeddings is a
  piecewise-$\Gamma$ embedding.
\end{lemma}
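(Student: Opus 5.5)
Let $f:X_1\to X_2$ and $g:X_2\to X_3$ be piecewise-$\Gamma$ embeddings. The plan is to refine the decompositions for $f$ and $g$ into a common one on which the composition is a single element of $\Gamma$ on each piece. Injectivity of $g\circ f$ is immediate, and $X_1$, $X_3$ are polyhedral by hypothesis. So the only thing to produce is a finite decomposition of $X_1$ on whose pieces $g\circ f$ is $\Gamma$-isometric.

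Choose a finite decomposition $P_1,\dotsc,P_n$ of $X_1$ on which $f$ is $\Gamma$-isometric, say $f=\gamma_i$ on the interior of $P_i$ with $\gamma_i\in\Gamma$. Choose a finite decomposition $Q_1,\dotsc,Q_m$ of $X_2$ on which $g$ is $\Gamma$-isometric, say $g=\delta_j$ on the interior of $Q_j$. By \cref{lem:piecebasics}(1), each $f(P_i)$ is a convex polyhedral set. Since $f(P_i)\subset X_2$, the sets $f(P_i)\doublecap Q_j$, for $j=1,\dotsc,m$, are convex polyhedral, being essential intersections of convex polyhedral sets. Their union is all of $f(P_i)$: the nonempty-interior intersections cover the interior of $f(P_i)$, and $f(P_i)$ is the closure of its interior. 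They are also pairwise essentially disjoint, so they form a finite decomposition of $f(P_i)$. \Cref{lem:piecebasics}(2) then gives a finite decomposition of $P_i$ into the sets $P_{ij}=\restr{f}{P_i}^{-1}(f(P_i)\doublecap Q_j)$, each convex polyhedral, with $f=\gamma_i$ on their interiors.

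Now $f$ maps the interior of $P_{ij}$ into the interior of $Q_j$. This is because $\gamma_i$ is a homeomorphism carrying $P_{ij}$ onto $f(P_i)\cap Q_j$ and interiors to interiors. Hence $g\circ f=\delta_j\gamma_i\in\Gamma$ on the interior of $P_{ij}$. The family $\{P_{ij}\}$ covers $X_1$ and its members are pairwise essentially disjoint. For distinct $i$ this follows because the $P_i$ are essentially disjoint, and for a fixed $i$ it follows from the previous step. So $\{P_{ij}\}$ is the required decomposition.

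The only delicate point is boundary behaviour. The lemmas constrain $f$ only on interiors, while $f$ is an arbitrary injective function on the lower-dimensional boundary pieces. I would handle this by working throughout with interiors and closures, as above. I would not claim that $f(P_i)$ equals $\gamma_i(P_i)$ on the boundary; I would only use that $\gamma_i(P_i)$ is the closure of $f(\operatorname{int}P_i)$. This is what \cref{lem:piecebasics} implicitly means by $f(P)$.
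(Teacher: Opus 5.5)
Your proof is correct and is essentially the paper's argument: decompose each $f(P_i)$ by the essential intersections $Q_j\doublecap f(P_i)$, pull these back to $P_i$ via \cref{lem:piecebasics}, and observe that $g\circ f$ agrees with the single element $\delta_j\gamma_i\in\Gamma$ on the interior of each resulting piece. The points you add are details the paper leaves implicit: that the nonempty essential intersections still cover $f(P_i)$, that interiors are carried into interiors of the $Q_j$, and that $f(P_i)$ must be read as $\gamma_i(P_i)$, so the pieces are really $\gamma_i^{-1}(\gamma_i(P_i)\doublecap Q_j)$.
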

\begin{proof}
  Given two piecewise-$\Gamma$ embeddings $f:X_1 \to X_2$ and
  $g:X_2 \to X_3$, let $P_1,\dotsc, P_m \subset X_1$ and
  $Q_1,\dotsc, Q_k \subset X_2$ be finite decompositions of $X_1$ and
  $X_2$ by convex polyhedral sets on which $f$ and $g$ are,
  respectively, $\Gamma$-isometric. Then for each $i$, the collection
  $Q_1 \doublecap f(P_i), \dotsc, Q_k \doublecap f(P_i)$ is a finite
  decomposition of $f(P_i)$, where $\doublecap$ is the essential
  intersection. Then by \cref{lem:piecebasics}, the collection
  $f^{-1}\left(Q_j \doublecap f(P_i) \right)$ forms a finite
  decomposition of $X_1$ by convex polyhedral sets on which $g\circ f$
  is $\Gamma$-isometric.
\end{proof}

Because the image of the interior of a convex polyhedral set under a
$\Gamma$-isometry is again the interior of a convex
polyhedral subset, the inverse of an invertible piecewise-$\Gamma$
embedding is again a piecewise-$\Gamma$ embedding. Say that an
invertible piecewise-$\Gamma$ embedding is a {\em piecewise-$\Gamma$
  isometry}.

\begin{definition}
  The {\em piecewise-$\Gamma$ isometry group of $\Delta$} is the group
  $PI(\Delta,\Gamma)$ of all piecewise-$\Gamma$ isometries $f:V \to V$
  under composition. When $\Gamma = \Isom(\Delta)$, we call this the
  {\em piecewise isometry group of $\Delta$} and write $PI(\Delta)$.
\end{definition}

\begin{remark}\label{rem:zakh}
  Piecewise isometries of tessellations may also be defined using
Zakharevich's language of assemblers \cite{Zak17}, following the
presentation of Kupers et al. \cite{Kup24}. We outline the
construction here, though we will not use it in the sequel. Let
$\cA_\Gamma$ be the category whose objects are convex polyhedral sets
and whose morphisms $P\to Q$ for nonempty $P$ are $f\in \Gamma$ such
that $f(P) \subset Q$, and for $P=\emptyset$ there is a unique
morphism $P\to Q$ for each $Q$. A covering family is a collection of
morphisms with the same target, the union of whose images is equal to
the target. This gives $\cA_\Gamma$ the structure of an
assembler. Then, in the notation of Kupers at al. \cite{Kup24}, the
piecewise isometry group $PI(\Delta, \Gamma)$ is the group
$\Aut_{\cG(\cA_\Gamma)}(V)$. Note however that $\cA$ is neither an
EA-assembler nor an S-assembler.
\end{remark}

\subsection{Translations} Recall that $T\leq \Isom(\Delta)$, the
group of all translations preserving the tessellation, acts properly
discontinuously and cocompactly on $V$. Given any hyperplane $H\in
\Omega$, there is a positive lower bound on the distance between $H$
and $Ht$ for $t\in T$, and therefore there is a translation $t_H\in T$
minimizing the distance.

\begin{lemma} \label{lem:stab-decomp}
  Given any hyperplane $H\in \Omega$, let $stab_T(H)$ be the
  stabilizer of $H$ in $T$. For any $t_H\in T$ minimizing the nonzero
  distance between $H$ and $Ht_H$, there is a direct sum decomposition
  $T \cong stab_T(H) \oplus \cyc{t_H}$.
\end{lemma}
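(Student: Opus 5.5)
Let $n$ be a unit normal vector to $H$, and consider the homomorphism $\phi: T \to \R$ sending a translation $t$ (by vector $v_t$) to the signed displacement $(v_t, n)$. The plan is to show that the image of $\phi$ is infinite cyclic, generated by $\phi(t_H)$. Then the splitting follows formally: $\ker\phi = stab_T(H)$, and since the image is free, the section $\phi(t_H)\mapsto t_H$ gives $T = stab_T(H)\oplus\cyc{t_H}$ ($T$ being abelian).

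First, the kernel. A translation $t$ fixes $H$ setwise exactly when $v_t$ is parallel to $H$, i.e.\ $(v_t,n)=0$. So $\ker\phi = stab_T(H)$.

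Second, the image. The distance between $H$ and $Ht$ equals $\abs{\phi(t)}$. The image $\phi(T)$ is a subgroup of $\R$, and by the fact recalled before the lemma (the positive lower bound on nonzero distances between $H$ and its translates, coming from proper discontinuity together with finiteness of $\Omega$ up to the action), its nonzero elements are bounded away from $0$. A subgroup of $\R$ with this property is discrete, hence cyclic. It is nonzero, because $T$ is cocompact, so its vectors span $V$ and some $v_t$ has a component along $n$. If the lower bound is only stated for hyperplanes in $\Omega$, one notes that $Ht\in\Omega$ for every $t\in T$, so the distances in question are among those controlled.

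Finally, the generator. By choice, $t_H$ minimizes $\abs{\phi(t)}$ over $t$ with $\phi(t)\neq 0$. The smallest positive element of a nonzero cyclic subgroup of $\R$ generates it, so $\phi(T)=\Z\,\phi(t_H)$. For any $t\in T$ write $\phi(t)=k\phi(t_H)$; then $t t_H^{-k}\in\ker\phi$, which gives $T = stab_T(H)+\cyc{t_H}$. The intersection $stab_T(H)\cap\cyc{t_H}$ is trivial, since $\phi(t_H^k)=k\phi(t_H)\ne 0$ for $k\ne 0$. As $T$ is abelian, this yields the direct sum.

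The only point needing care is the discreteness of $\phi(T)$. I would justify it directly: $T$ is a lattice $\Lambda\subset V$, and $\Omega_H$ is locally finite (by local finiteness of $\Delta$), so the translates $H+v$ for $v\in\Lambda$ lie in $\Omega_H$. Such translates cannot accumulate, which gives the required positive minimum $\abs{\phi(t_H)}$.
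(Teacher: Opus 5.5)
Your proof is correct and is essentially the paper's argument. The paper lets $T/stab_T(H)$ act on $\Omega_H$ and uses minimality of $t_H$ to show that every $T$-translate of $H$ equals $Ht_H^N$ for some $N$; this is the same Euclidean-division step as your observation that the displacement image in $\R$ is generated by $\abs{\phi(t_H)}$. Your version makes explicit what the paper leaves implicit: that the kernel is the stabilizer, that the image is discrete and nonzero, and that the section is a homomorphism, which gives a direct sum because $T$ is abelian.
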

\begin{proof}
  The quotient group $T / stab_T(H)$ acts on the set of hyperplanes
  $\Omega_H$. Given $\phi \in T/ stab_T(H)$, chose the $N$
  such that $H t_H^N$ lies closest to $H \phi$. Then $H \phi = H t_H^N$
  by minimality of $t_H$. The assignment $\phi \mapsto t^N$ is a
  section of the quotient map $T \to T/stab_T(H)$.
\end{proof}

\begin{lemma} \label{lem:trans-hyperplane}
  For each hyperplane $H\in \Omega$, the stabilizer $stab_T(H) \leq T$
  acts cocompactly on $H$. 
\end{lemma}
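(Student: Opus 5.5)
We want to show that $stab_T(H)$ acts cocompactly on $H$. The plan is to use \cref{lem:stab-decomp} to reduce to a statement about lattices. Since $T$ acts properly discontinuously and cocompactly on $V$ by translations, $T$ is a lattice in the vector group $V$: it is a discrete subgroup whose translation vectors span $V$, and $T\cong\Z^n$ with $n=\dim V$. Write $H = x_0 + H_0$, where $H_0$ is the linear hyperplane parallel to $H$. A translation $t_v$ by a vector $v$ preserves $H$ exactly when $v\in H_0$. Hence $stab_T(H)$ is the set of translations whose vectors lie in $T\cap H_0$, and the claim reduces to showing that $T\cap H_0$ spans $H_0$. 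A discrete subgroup of $H_0$ that spans $H_0$ is a lattice there, and a lattice of translations of $H$ acts cocompactly on $H$.

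To prove the spanning, apply \cref{lem:stab-decomp}: $T \cong stab_T(H)\oplus\cyc{t_H}$. Since $T\cong \Z^n$ is free abelian of rank $n$ and $\cyc{t_H}\cong\Z$ (as $t_H$ moves $H$ a nonzero distance), the summand $stab_T(H)$ is free abelian of rank $n-1$. Pick generators $s_1,\dots,s_{n-1}$ of $stab_T(H)$ with translation vectors $v_1,\dots,v_{n-1}\in H_0$, and let $w$ be the translation vector of $t_H$. Together these generate $T$, and the vectors of $T$ span $V$ by cocompactness, so $v_1,\dots,v_{n-1},w$ span $V$. Because $w\notin H_0$, the vectors $v_1,\dots,v_{n-1}$ must span $H_0$.

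Cocompactness then follows by writing down a compact fundamental set. Take $K=\{x_0+\sum a_i v_i : a_i\in[0,1]\}\subset H$. Every point $x_0+\sum c_i v_i$ of $H$ can be translated into $K$ by the element $\prod s_i^{-\lfloor c_i\rfloor}$ of $stab_T(H)$, so $H = K\cdot stab_T(H)$.

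The main obstacle is the first step: checking that $T$ really is a lattice of full rank $n$ spanning $V$. Discreteness comes from proper discontinuity. Spanning follows because, if the vectors of $T$ lay in a proper subspace $W$, then the quotient $V/T$ would surject onto the noncompact space $V/W$, contradicting cocompactness. Once full rank is established, everything else is linear algebra built on \cref{lem:stab-decomp}.
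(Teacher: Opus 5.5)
Your proof is correct, but it gets cocompactness by a different mechanism than the paper. Both arguments hinge on the splitting $T\cong stab_T(H)\oplus\cyc{t_H}$ of \cref{lem:stab-decomp}.

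The paper uses the splitting geometrically. It takes the slab $A$ between $H$ and $Ht_H$ and a compact fundamental domain $D$ for $T$, and argues that the $stab_T(H)$-translates of $D$ cover $A$.

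You use it linear-algebraically. Since the vectors of $T$ span $V$ and $T$ is generated by $stab_T(H)$ and $t_H$, the vectors of $stab_T(H)$ span a subspace of $H_0$ of dimension at least $n-1$, hence all of $H_0$. A fundamental parallelepiped then finishes. Two small remarks on your write-up:
\begin{itemize}
\item It is this dimension count, not the fact $w\notin H_0$, that forces $v_1,\dots,v_{n-1}$ to span $H_0$.
\item The rank computation is optional, since any $n-1$ independent elements of $stab_T(H)$ already give a covering parallelepiped.
\end{itemize}

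Your route relies on the standard fact that a discrete spanning subgroup of $V$ is a lattice. In return it shows that $stab_T(H)$ is a full lattice in $H_0$ with an explicit compact fundamental set.

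It also avoids a weak point in the paper's write-up. There, the claim that the Voronoi cell of $p$ lies in $A$ fails in general. Take the tessellation cut out by the lines $y\in\tfrac1{10}\Z$ and $x-5y\in\Z$, with $H$ the $x$-axis and $t_H$ translation by $(\tfrac12,\tfrac1{10})$. The lattice vector $(0,\tfrac15)$ then lets the Voronoi cell of $p=(0,\tfrac1{20})$ reach height $\tfrac3{20}$, outside $A$.

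The paper's approach is easily repaired: $D$ is compact, so $A$ is covered by $stab_T(H)$-translates of a compact set $\bigcup_{\abs{k}\le K}Dt_H^k$. In that repaired form it needs no lattice structure theory at all.
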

\begin{proof}
  Pick any $t_H\in T$ minimizing the nonzero distance between $H$ and
  $Ht_H$, and let $A \subset V$ be the convex set bounded by $H$ and
  $H t_H$. Let $p\in V$ be any point equidistant from $H$ and $H
  t_H$. A fundamental domain $D$ for the action of $T$ on $V$ is given
  by a Voronoi region about $p$, which is contained in $A$. By
  \cref{lem:stab-decomp}, the union of the $stab_T(H)$-orbit of $D$ is
  all of $A$. It follows that the action of $stab_T(H)$ on $A$ is
  cocompact, and so the action of $stab_T(H)$ on $H$ is cocompact.
\end{proof}

\begin{corollary} \label{cor:line-trans}
  If $L\subset V$ is an affine line that is the intersection of
  hyperplanes in $\Omega$, then $stab_T(L)$ acts cocompactly on $L$.
\end{corollary}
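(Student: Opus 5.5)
Since $L$ is an intersection of hyperplanes in $\Omega$, we can write $L = H_1\cap \dotsb \cap H_k$ for some hyperplanes $H_i\in\Omega$. I will prove by induction on $k$ a slightly more general statement. Let $A = H_1\cap\dotsb\cap H_k$ be a nonempty intersection of hyperplanes of $\Omega$. Then $stab_T(A)$ acts cocompactly on $A$. Equivalently, $stab_T(A)$ contains a lattice of translations of the affine subspace $A$, meaning the translation vectors of $stab_T(A)$ span the direction space $\vec A$.

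The approach is algebraic. The group $T$ is a lattice $\Lambda\subset V$ of full rank, since it acts properly discontinuously and cocompactly. A translation by $\lambda$ stabilizes a hyperplane $H$ exactly when $\lambda\in\vec H$. It therefore stabilizes $A$ exactly when $\lambda\in\vec A=\bigcap\vec H_i$. So $stab_T(A)=\Lambda\cap\vec A$, and the claim is that $\Lambda\cap\vec A$ spans $\vec A$, i.e.\ $\vec A$ is a rational subspace with respect to $\Lambda$. \cref{lem:trans-hyperplane} says exactly that each $\vec H_i$ is $\Lambda$-rational, since $stab_T(H_i)=\Lambda\cap\vec H_i$ acts cocompactly on $H_i$. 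The key step is then the standard fact that an intersection of rational subspaces is rational. To see this, note that $\vec W$ is rational iff $\Lambda\cap \vec W$ spans $\vec W$, iff $\vec W$ is cut out by linear functionals that are rational on $\Lambda$ (i.e.\ lie in $\Lambda^*\otimes\mathbb{Q}$). Intersections just concatenate these defining rational functionals. Then $\vec A$ is the kernel of a rational linear system, and Gaussian elimination over $\mathbb{Q}$ produces a rational basis of the kernel, whose multiples lie in $\Lambda\cap\vec A$.

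Once $\Lambda\cap\vec A$ is a lattice in $\vec A$, it acts cocompactly on the affine space $A$, which is a translate of $\vec A$. For $A=L$ a line, this gives the corollary.

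The main obstacle is justifying the equivalence between ``$\Lambda\cap\vec H$ spans $\vec H$'' and ``$\vec H$ is the kernel of a $\Lambda$-rational functional''. I would handle it in coordinates. Identify $\Lambda=\Z^n$ via a basis. If $\Lambda\cap\vec H$ contains $n-1$ independent integer vectors, their span is cut out by a functional obtained by solving a linear system over $\mathbb{Q}$, so it has rational coefficients. Conversely, rational equations have rational solution spaces with rational bases. The same coordinate argument gives the intersection step directly, so the whole proof reduces to \cref{lem:trans-hyperplane} plus linear algebra over $\mathbb{Q}$.
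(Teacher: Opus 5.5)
Your proof is correct, but it takes a different route from the paper's. The paper inducts on dimension geometrically. It regards a hyperplane $H\in\Omega$, together with the induced arrangement $\{H\cap H'\}$, as a tessellation in its own right, with $stab_T(H)$ playing the role of $T$; this group is cocompact on $H$ by \cref{lem:trans-hyperplane}. It then reapplies \cref{lem:trans-hyperplane} inside $H$ and descends until it reaches the line $L$.

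You instead apply \cref{lem:trans-hyperplane} only in $V$, once for each $H_i$, to get that each $\vec H_i$ is $\Lambda$-rational. You then identify $stab_T(A)=\Lambda\cap\vec A$ and finish with the linear-algebra fact that an intersection of rational subspaces is rational.

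Your route avoids having to check that the restricted arrangement on $H$ again satisfies the standing hypotheses. These include a positive minimal distance between parallel members (so that \cref{lem:stab-decomp} applies) and finitely many $stab_T(H)$-orbits of the sets $H\cap H'$; the paper asserts both in a single sentence. The paper's route stays inside the tessellation framework, and as a by-product shows that every flat of the arrangement carries an induced tessellation of the same type.

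One small presentational point: you announce an induction on $k$ but never use it. Concatenating the rational defining functionals already handles all $k$ at once.
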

\begin{proof}
  Given any $H\in \Omega$, choosing coordinates on $H$ gives $H$ the
  structure of an inner product space with a distinguished collection
  of hyperplanes, the collection of all proper subsets $H\cap H'$ for
  $H'\in \Omega$. This defines a tessellation of $H$ which has a
  cocompact and properly discontinuous group of isometries by
  \cref{lem:trans-hyperplane} with finitely many orbits of
  hyperplanes. The result now follows from \cref{lem:trans-hyperplane}
  by induction.
\end{proof}

\section{Structure of convex polyhedral sets}

\subsection{Bounds and rank}

Each nonempty convex polyhedral subset $P$ is a finite intersection of
half-spaces in $\mathcal{H}$. Since each maximal nested family is
discretely linearly ordered, this intersection contains minimal
elements in each such family. Here we introduce notation for this.

For each $\xi \in \Xi$, say that $P$ is {\em $\xi$-bounded} if
$P\subset M$ for some $M\in \xi$. If no such bounding half-space
exists, say that $P$ is {\em $\xi$-unbounded}. Let
$\Xi_P \subset \Xi$ be the collection of $\xi$ for which $P$ is
$\xi$-bounded. Given $\xi \in \Xi_P$, let
$b_P(\xi) \in \mathcal{H}$ be the minimal half-space $M\in \xi$
such that $P \subset M$. We evidently have the following description
of $P$:
\begin{equation}\label{eq:piecebounds}
  P = \bigcap_{\xi \in \Xi_P} b_P(\xi).
\end{equation}

A key tool in understanding the structure of polyhedral sets will be
decompositions into irreducible pieces. The appropriate notion of
irreducibility is defined using the concept of rank.

\begin{definition}
  The {\em rank} of a convex subset $P \subset V$, denoted $\rk(P)$,
  is the maximal number of linearly independent rays in $P$. Define
  the {\em rank} of the empty set to be $-1$.
\end{definition}

It is clear that if $Q\subseteq P$ then $\rk(Q)\leq \rk(P)$.  In fact,
the rank of a convex polyhedral set is related to the dimension of its
limit set.

\begin{lemma}\label{lem:limitdim}
  If $P$ is a convex polyhedral set of rank $k$ then
  $L(P)\subset \partial V$ is a closed submanifold with corners of
  dimension $k-1$, equal to the intersection of finitely many closed
  hemispheres.
\end{lemma}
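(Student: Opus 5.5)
We identify $L(P)$ with $\cone(P)\cap S(V)$, as explained above, and work with the recession cone. Write $P=M_1\cap\dotsb\cap M_n$ with $M_i\in\mathcal{H}$, say $M_i=\{x : (a_i,x)\le c_i\}$ with $a_i\neq 0$. For the nonempty set $P$ the recession cone is the polyhedral cone
\[
  \cone(P)=\{v\in V : (a_i,v)\le 0 \text{ for all } i\},
\]
which is the standard description of recession cones of polyhedra (cf.~\cite[1.5]{Zieg}). Consequently $L(P)=\bigcap_i L(M_i)$, and each $L(M_i)=\{u\in S(V): (a_i,u)\le 0\}$ is a closed hemisphere. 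This already follows from the remark above that $L(C\cap D)=L(C)\cap L(D)$ for convex sets with nonempty intersection, and it gives the second assertion directly.

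Next I relate the rank to the dimension of the cone. The main point is to show that $\rk(P)=\dim \operatorname{span}\cone(P)$. First, every ray in $P$ has its direction in $\cone(P)$: since $P$ is closed and convex, a ray $x+tv\subset P$ forces $v\in\cone(P)$. Hence any linearly independent rays in $P$ have linearly independent directions in $\cone(P)$, which gives $\rk(P)\le\dim\operatorname{span}\cone(P)$.

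For the reverse inequality, a convex cone spans a subspace of dimension $k$ exactly when it contains $k$ linearly independent vectors $v_1,\dotsc,v_k$. Fixing any $x\in P$, the rays $x+tv_j$ lie in $P$, so $\rk(P)\ge k$. Here ``linearly independent rays'' is interpreted as rays with linearly independent direction vectors; I will state this reading explicitly. Since a convex cone has nonempty relative interior in its span, $\cone(P)$ is then a $k$-dimensional polyhedral cone, and its intersection with $S(V)$ has dimension $k-1$.

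Finally I show that $L(P)$ is a submanifold with corners. The cone $\cone(P)$ is a full-dimensional polyhedral cone inside $W=\operatorname{span}\cone(P)$, and it is cut out by the restricted linear inequalities $(a_i,\cdot)\le 0$. If $\cone(P)$ is not all of $W$ and contains no line, then radial projection onto an affine slice identifies $L(P)$ with a compact convex polytope of dimension $k-1$, which is a manifold with corners. In general $\cone(P)=\operatorname{lin}\oplus C'$, where $\operatorname{lin}$ is the lineality space and $C'$ is pointed. Locally, $\cone(P)\cap S(W)$ is then modeled on a finite intersection of half-spaces in $\R^{k-1}$. Indeed, near any point $u$ only the constraints active at $u$ matter, and these are transverse to the radial direction, so the corner structure comes from the local cone of active constraints. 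The main obstacle is handling these degenerate cases cleanly: when $k=0$ we have $L(P)=\emptyset$, which matches dimension $-1$, and when $\cone(P)=W$ we get a great subsphere with no boundary. Both fit the statement under the natural conventions.
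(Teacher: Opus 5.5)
Your proof is correct and follows essentially the same route as the paper: express $L(P)$ as the intersection of the closed hemispheres $L(M_i)$ coming from a half-space presentation of $P$, then compute its dimension through the linear span $W$ of $\cone(P)$. You also justify the equality $\rk(P)=\dim W$ and the local corner structure (in the generalized, locally polyhedral sense the statement intends), both of which the paper simply asserts.
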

\begin{proof}
  From \cref{eq:piecebounds} we deduce the equality
  \begin{equation} \label{eq:limitsetdesc}
    L(P) = \bigcap_{\xi\in \Xi_P} L(\xi),
  \end{equation}
  hence $L(P) \subset \partial V$ is a closed submanifold with
  corners. For the computation of dimension, let $W \subset V$ be the
  linear span of the recession cone, $\cone(P)$. Then $L(P)$ is the
  intersection of the codimension-0 submanifold $\cone(P) \subset W$
  with the codimension-1 submanifold $S(V) \cap W \subset W$, so
  $L(P)$ has dimension one less than that of $W$. This completes the
  proof, since the dimension of $W$ is equal to the rank of $P$.
\end{proof}

\begin{lemma} \label{lem:unboundeddirection} Suppose $P$ is a convex
  polyhedral set. For each $\xi\in \Xi$, there is a containment
  $L(P)\subset L(\xi)$ if and only if $P$ is $\xi$-bounded.
\end{lemma}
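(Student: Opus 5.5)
We prove the two implications separately. Throughout, fix $\xi\in\Xi$ and a representative $M\in\xi$. Write $M=\{x\in V : (x,n)\le c\}$ with $n$ a unit normal. The maximal nested family $\xi$ then consists of all half-spaces $\{(x,n)\le c'\}$ with boundary in $\Omega_{\partial M}$. Under the identification of $\partial V$ with $S(V)$, the hemisphere $L(\xi)=L(M)$ is $\{u\in S(V) : (u,n)\le 0\}$.

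For the forward direction, suppose $P$ is $\xi$-bounded, so $P\subset M'$ for some $M'\in\xi$. Limit sets of convex sets are monotone under inclusion. Hence $L(P)\subset L(M')=L(\xi)$. (If $P=\emptyset$ the statement is trivial in this direction.)

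For the converse, assume $P$ is nonempty and $L(P)\subset L(\xi)$; we must show $\sup_{x\in P}(x,n)<\infty$. First we show that the linear functional $x\mapsto (x,n)$ is bounded above on $P$.

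By \cref{eq:piecebounds}, or directly from the definition, $P$ is a finite intersection of half-spaces. It is therefore a polyhedron in the classical sense, and Minkowski--Weyl gives $P=Q+\cone(P)$ with $Q$ a polytope. The hypothesis says that every $u\in\cone(P)$ satisfies $(u,n)\le 0$. Hence $\sup_P (x,n)\le \max_Q (x,n)<\infty$.

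Alternatively, one can avoid Minkowski--Weyl. If $(x,n)$ were unbounded above on $P$, then the linear program $\max (x,n)$ over $P$ would be unbounded. LP duality, or the standard fact that an unbounded linear functional on a polyhedron increases along some recession direction, then produces $u\in\cone(P)$ with $(u,n)>0$, contradicting the hypothesis.

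It remains to pass from boundedness of $(x,n)$ to containment in an actual half-space of $\xi$. This step needs the hyperplanes of $\Omega_{\partial M}$ to be unbounded in both normal directions, and it is the only place where the tessellation hypotheses enter. By \cref{lem:stab-decomp} the translation $t_H$ for $H=\partial M$ moves $H$ by a nonzero amount in the $n$-direction. The translates $Ht_H^N$ for $N\in\Z$ are then hyperplanes in $\Omega_H$ with $n$-levels $c+Nd$, where $d\neq 0$, and these levels tend to $+\infty$. Choose $N$ with $c+Nd\ge \sup_P(x,n)$. Then $P$ is contained in the corresponding half-space $\{(x,n)\le c+Nd\}$, which lies in $\xi$. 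So $P$ is $\xi$-bounded.

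The main obstacle is the converse's middle step: deducing boundedness of the functional from the recession-cone condition. This genuinely uses polyhedrality, since for general closed convex sets such as a parabola region it fails. I would cite Minkowski--Weyl \cite{Zieg} for it.
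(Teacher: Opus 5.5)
Your proof is correct, but it takes a different route from the paper's. The paper proves the converse by contraposition. If $P$ is $\beta$-unbounded, it asserts that $v_\beta$ is not a nonnegative combination of the normals $v_\xi$ for $\xi\in\Xi_P$. Farkas' lemma then yields a direction $v$ with $(v,v_\xi)\ge 0$ for all $\xi\in\Xi_P$, so $v\in L(P)$ by Equation \eqref{eq:limitsetdesc}, yet $v\notin L(\beta)$.

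You instead argue directly. Minkowski--Weyl bounds the functional $(x,n)$ on $P$ by its maximum on the polytope part $Q$. The translation $t_H$ then turns that numerical bound into an actual half-space of $\xi$ containing $P$. Your LP-duality alternative is essentially the paper's Farkas argument.

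Your version separates the two ingredients cleanly: polyhedral geometry and the tessellation hypothesis. It also makes explicit what the paper's word ``clearly'' conceals. Deducing that $v_\beta$ lies outside the cone of bounding normals needs exactly your last step: $\Omega_H$ must contain hyperplanes arbitrarily far out in both normal directions, which follows from the cocompact translation action.

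The paper's version stays within its own description of $L(P)$ as the intersection of the finitely many hemispheres $L(\xi)$, $\xi\in\Xi_P$. It needs only Farkas' lemma rather than the full decomposition $P=Q+\cone(P)$.
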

\begin{proof}
  It is clear that if $P$ is $\xi$-bounded then $L(P)\subset
  L(\xi)$. Conversely, let $\Xi_P$ be the set of $\xi\in \Xi$ such
  that $P$ is $\xi$-bounded and consider any $\beta \in \Xi -
  \Xi_P$. For each $\xi\in \Xi$ choose a vector $v_\xi\in V$ that is a
  positive normal vector to the hyperplane $\partial M$ for any
  $M\in \xi$. Since $P$ is $\beta$-unbounded, clearly $v_\beta$ cannot
  be written as a nonnegative linear combination of $v_\xi$ for
  $\xi\in \Xi_P$.  It follows from a standard formulation of the
  Farkas Lemma (for example, see Farkas Lemma 3 of Anderson
  \cite[\S1.5.2, page 30]{Anderson}) that there is a unit vector
  $v\in S(V)$ such that $(v,v_\beta) < 0$ and $(v,v_\xi) \geq 0$ for
  all $\xi\in \Xi_P$. It follows from the latter conditions and
  Equation \cref{eq:limitsetdesc} that $v\in L(P)$, and from the former
  condition that $v\notin L(\xi)$.
\end{proof}

We are interested in how rank changes upon ``cutting'' a polyhedral
set by a hyperplane. That is, we are interested in relating the rank
of $P$ to the rank of $P\doublecap M$ for $M\in \mathcal{H}$. The relation
between the ranks of these subsets depends on the boundedness of $P$.

\begin{prop} \label{prop:piececut} Suppose $P$ is a nonempty
  convex polyhedral set.  Fix $\xi\in \Xi$ and a half-space
  $M\in \xi$ such that $P\doublecap M$ is nonempty.
  \begin{enumerate}
  \item \label{unboundcutpart} If $P$ is $\xi$-bounded, then  
    $\rk(P\doublecap M) = \rk(P)$.
  \item \label{boundcutpart} If $P$ is $\xi$-unbounded and 
    $\iota(\xi)$-bounded, then  $\rk(P\doublecap M) < \rk(P)$.
  \item \label{doubleunboundcutpart} If $P$ is unbounded with respect
    to both $\xi$ and $\iota(\xi)$, then $\rk(P\doublecap M) = \rk(P)$.
  \end{enumerate}
\end{prop}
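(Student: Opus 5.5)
The plan is to reduce every rank comparison to a comparison of limit sets, using \cref{lem:limitdim} (rank is one more than the dimension of the limit set) together with the identity $L(C\cap D)=L(C)\cap L(D)$ for convex sets with nonempty intersection. Since $P\doublecap M$ is nonempty, it equals $P\cap M$, and so
\[
  L(P\doublecap M) = L(P)\cap L(\xi).
\]
Each part then reduces to computing the dimension of $L(P)\cap L(\xi)$ relative to that of $L(P)$. Equivalently, we compare the spans of the recession cones $\cone(P)\cap\cone(M)$ and $\cone(P)$.

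For part (\ref{unboundcutpart}), \cref{lem:unboundeddirection} gives $L(P)\subset L(\xi)$. Hence $L(P\doublecap M)=L(P)$, and the ranks agree.

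For part (\ref{boundcutpart}), \cref{lem:unboundeddirection} applied to $\iota(\xi)$ gives $L(P)\subset L(\iota(\xi))$. Therefore
\[
  L(P)\cap L(\xi)\subset L(\xi)\cap L(\iota(\xi))=\partial L(\xi),
\]
a great subsphere $L(\partial M)$. The same lemma applied to $\xi$ gives $L(P)\not\subset L(\xi)$, so $L(P)\not\subset \partial L(\xi)$. In cone terms, $\cone(P)\cap\cone(M)$ lies in the linear hyperplane $W_0$ parallel to $\partial M$, while $\cone(P)$ does not. It follows that the span of $\cone(P)\cap\cone(M)$ lies in $W\cap W_0$, where $W$ is the span of $\cone(P)$, and $W\cap W_0$ is a proper subspace of $W$. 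This gives strict inequality of ranks. The degenerate case $\cone(P)=\{0\}$ cannot occur, since then $P$ would be bounded in every family, contradicting $\xi$-unboundedness.

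For part (\ref{doubleunboundcutpart}), $L(P)$ is contained in neither closed hemisphere $L(\xi)$ nor $L(\iota(\xi))$. So $\cone(P)$ contains a vector $a$ with $(a,v_\xi)>0$ and a vector $b$ with $(b,v_\xi)<0$, where $v_\xi$ is a normal vector pointing into $M$. I want to show that $\cone(P)\cap\{(x,v_\xi)\ge 0\}$ spans all of $W=\operatorname{span}\cone(P)$. Take any $w\in\cone(P)$. If $(w,v_\xi)\ge 0$, then $w$ is already in the cut cone. Otherwise $w+ta$ lies in the cut cone for large $t$, and then $w=(w+ta)-ta$ is in the span. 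Since $\cone(P)$ spans $W$, the cut cone spans $W$ as well, so the ranks are equal. (Part (\ref{unboundcutpart}) is the case where no such $b$ exists.)

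The main obstacle is the dimension-counting step in part (\ref{boundcutpart}): one must make sure that a proper containment of limit sets really drops dimension. The argument handles this through linear spans of cones rather than manifold dimension, using the fact that $\cone(P)$ spans $W$ and that $W\not\subset W_0$.
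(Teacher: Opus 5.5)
Your proof is correct and follows the paper's route. Both arguments reduce to $L(P\doublecap M)=L(P)\cap L(\xi)$ and use \cref{lem:unboundeddirection} in each case to decide whether $L(P)$ lies in $L(\xi)$, in $L(\iota(\xi))$, or in neither. The one difference is the dimension count: you use linear spans of recession cones where the paper uses spherical-segment transversality, which makes the codimension claims in parts (2) and (3) fully precise and shows that part (3) needs only the direction $a$ with $(a,v_\xi)>0$.
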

\begin{proof}
  First suppose $P$ is $\xi$-bounded. Then \cref{unboundcutpart}
  follows from Equation \eqref{eq:limitsetdesc} since  
  \[ L(P\doublecap M) = L(P)\doublecap L(M) = \bigcap_{\xi \in
      \Xi_P} L(\xi)  \doublecap L(M) = L(P).
  \]
  
  Now suppose $P$ is $\xi$-unbounded and $\iota(\xi)$-bounded.  It
  follows from Equation \eqref{eq:limitsetdesc} that
  $L(P\doublecap M)$ is contained in $\partial L(\xi)$. By
  \cref{lem:unboundeddirection} there is some
  $v\in L(P) - \partial L(\xi)$. By spherical convexity of the limit
  set $L(P)$, there is a spherical segment from $v$ to $L(P\doublecap M)$
  intersecting $L(P\doublecap M)$ transversely, so the dimension of $L(P)$
  is at least one greater than the dimension of $L(P\doublecap M)$. This
  proves \cref{boundcutpart}.
   
  Now suppose $P$ is both $\xi$-unbounded and
  $\iota(\xi)$-unbounded.  By \cref{lem:unboundeddirection}, find
  $u \in L(P)- L(\xi)$ and $v\in L(P)- L(\iota(\xi))$. By
  convexity, the limit set $L(P)$ contains a spherical line segment
  from $u$ to $v$. This segment intersects the $(n-2)$-sphere
  $\partial L(\xi)$ transversely inside $L(P)$, so
  $L(P) \cap \partial L(\xi)$ has codimension 1 inside
  $L(P)$. Since $L(P\doublecap M)$ contains both
  $L(P) \cap\partial L(\xi)$ and $v$, it follows that the rank of
  $P\doublecap M$ is equal to $(\rk(P)-1)+1$, proving
  \cref{doubleunboundcutpart}.
\end{proof}

\subsection{Irreducibles}

The first step in defining a space on which $PI(\Delta)$ acts is to
define a set of classes of certain polyhedral sets on which
$PI(\Delta)$ acts. Because a piecewise isometry may not have a
well-defined associated isometry on a given convex polyhedral set, the
polyhedral sets we consider will be `small' in a sense that we define
in \cref{def:irred} and alternatively characterize in
\cref{lem:irred-form}.

\begin{definition} \label{def:irred} A convex polyhedral set $P$ is
  {\em irreducible} if, for any finite decomposition
  $P_1,\dotsc, P_m$ of $P$, exactly one of the sets
  $P_i$ has rank equal to the rank of $P$.
\end{definition}

It follows from Equation \eqref{eq:limitsetdesc} by induction that a
convex polyhedral set $P$ is irreducible if and only if for each
$M\in \mathcal{H}$, exactly one of $P\doublecap M$ and
$P\doublecap \iota(M)$ has rank equal to the rank of $P$. We will
often use this characterization of irreducibility in what follows.

\begin{definition} \label{def:thin}
  A nonempty convex polyhedral set $P$ is {\em thin} if for all
  $\xi \in \Xi$ with the property that $P$ is both
  $\xi$-bounded and $\iota(\xi)$-bounded, for any $M\in \xi$
  either $P\doublecap M = P$ or $P\doublecap M = \emptyset$.
\end{definition}

To understand this another way, define a {\em $\xi$-layer} to be a
nonempty polyhedral set of the form $M\doublecap M'$ some $M\in
\xi$ and $M'\in \iota(\xi)$. The set of $\xi$-layers is a
directed poset ordered by inclusion. Then a convex polyhedral set $P$
is thin if, for each $\xi \in \Xi$ such that $P$ is contained in
an $\xi$-layer, $P$ is contained in a minimal $\xi$-layer. Note
that an $\xi$-layer is thin if and only if it is minimal.

\begin{lemma} \label{lem:irred-form} For a nonempty convex polyhedral
  set $P$, the following are equivalent:
  \begin{enumerate}
  \item \label{cond-irred} $P$ is irreducible.
  \item \label{cond-bound} $P$ is thin and for each $\xi \in \Xi$, $P$
    is $\xi$-bounded or $\iota(\xi)$-bounded.
  \item $P$ is thin and $L(P)$ is a closed cell in the spherical
    complex $L(\Delta)$.
  \end{enumerate}
\end{lemma}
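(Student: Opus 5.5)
We use the reformulation of irreducibility stated after \cref{def:irred}: $P$ is irreducible if and only if for every $M\in\mathcal{H}$ exactly one of $P\doublecap M$ and $P\doublecap\iota(M)$ has rank $\rk(P)$. We then prove $(1)\Leftrightarrow(2)$ by a case analysis on $\xi$-boundedness using \cref{prop:piececut}. We prove $(2)\Leftrightarrow(3)$ by translating boundedness into containment of limit sets via \cref{lem:unboundeddirection}.

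\emph{$(2)\Rightarrow(1)$.} Fix $M\in\mathcal{H}$ and let $\xi\in\Xi$ be its family, so $\iota(M)\in\iota(\xi)$.
\begin{enumerate}
\item If $P$ is bounded with respect to both $\xi$ and $\iota(\xi)$, thinness gives $P\doublecap M\in\{P,\emptyset\}$. Since $P\doublecap M$ and $P\doublecap\iota(M)$ cover $P$ and are essentially disjoint, exactly one of them equals $P$ and the other is empty, of rank $-1$.
\item Otherwise, by (2), $P$ is bounded with respect to exactly one of $\xi,\iota(\xi)$; by symmetry say $P$ is $\xi$-unbounded and $\iota(\xi)$-bounded. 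Then $P\not\subset M$. Since $P$ is convex with nonempty interior, this forces $P\doublecap\iota(M)\neq\emptyset$. By \cref{prop:piececut}(1), applied to the $\iota(\xi)$-bounded set $P$, we get $\rk(P\doublecap\iota(M))=\rk(P)$. On the other side, either $P\doublecap M$ is empty, or \cref{prop:piececut}(2) gives $\rk(P\doublecap M)<\rk(P)$.
\end{enumerate}
In both cases exactly one of the two pieces has full rank.

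\emph{$(1)\Rightarrow(2)$.} We argue by contraposition.
\begin{enumerate}
\item Suppose $P$ is unbounded with respect to both $\xi$ and $\iota(\xi)$. Then $P$ lies in no member of $\xi$ or of $\iota(\xi)$. Hence, since the parallel family $\Omega_H$ is discrete, some hyperplane of that family meets the interior of $P$. Taking $M\in\xi$ bounded by it, both $P\doublecap M$ and $P\doublecap\iota(M)$ are nonempty. \cref{prop:piececut}(3) then gives both rank $\rk(P)$, contradicting irreducibility.
\item Suppose $P$ is bounded with respect to both $\xi$ and $\iota(\xi)$ but is not thin. Then there is $M\in\xi$ with $\emptyset\neq P\doublecap M\neq P$, so $P\doublecap\iota(M)\neq\emptyset$ as well. \cref{prop:piececut}(1), applied to $M$ and to $\iota(M)$, shows both pieces have full rank, again a contradiction.
\end{enumerate}

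\emph{$(2)\Leftrightarrow(3)$.} By \cref{lem:unboundeddirection}, the boundedness condition in (2) says precisely that for every $\xi$ we have $L(P)\subset L(\xi)$ or $L(P)\subset L(\iota(\xi))$. In other words, no great subsphere $\partial L(\xi)$ cuts $L(P)$. So it remains to show the following: for the nonempty-or-empty intersection of hemispheres $L(P)=\bigcap_{\xi\in\Xi_P}L(\xi)$ from \eqref{eq:limitsetdesc}, this no-cutting condition is equivalent to $L(P)$ being a closed cell of $L(\Delta)$. The empty case ($\rk P=0$) is trivially both; it is the $(-1)$-cell.

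A closed cell is the closure of a convex open cell, on which every $L(H)$ has constant sign. Hence it lies in one closed hemisphere of each subsphere, which gives $(3)\Rightarrow(2)$.

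For $(2)\Rightarrow(3)$, take $\xi$ with $L(P)\subset L(\xi)$. By spherical convexity, the relative interior of $L(P)$ lies either entirely in $\partial L(\xi)$ or entirely in the open hemisphere. Thus all relative-interior points of $L(P)$ have the same sign vector, and so lie in a single open cell $C$. Conversely, the points of $C$ satisfy the sign conditions $L(\xi)$ for $\xi\in\Xi_P$, so $C\subset L(P)$. Hence $L(P)=\overline{C}$.

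The step I expect to be the main obstacle is this last identification. It requires care with the oriented-matroid description of closed cells, specifically the minimality characterization stated in the paper, and with lower-dimensional $L(P)$ lying inside some subspheres. The sign-vector argument above is what I would try first.
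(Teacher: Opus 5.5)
Your proof is correct and follows the paper's. The $(1)\Leftrightarrow(2)$ argument is the same case analysis via \cref{prop:piececut} and the one-half-space reformulation of irreducibility. For $(2)\Leftrightarrow(3)$, the paper writes $L(P)$ as the subsphere $S=\bigcap_{\xi\in\Xi_P\cap\iota(\Xi_P)}L(\xi)$ cut by one closed hemisphere for each remaining $\beta$, and simply asserts that this is a closed cell exactly when every pair $\{\beta,\iota(\beta)\}$ contributes a hemisphere. Your relative-interior sign-vector argument is sound and supplies the detail the paper leaves implicit: the relative interior of $L(P)$ lies in a single open cell $C$, and $C\subset L(P)$ by \eqref{eq:limitsetdesc}, so $L(P)=\overline{C}$. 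The step you flagged as the main obstacle is therefore not a problem.
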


\begin{proof}
  To see that irreducibility implies the second condition, suppose $P$
  is irreducible. First suppose there exists $\xi \in \Xi$ such that
  $P$ is both $\xi$-unbounded and $\iota(\xi)$-unbounded. Then for any
  $M\in \xi$, both $P \cap M$ and $P\cap \iota(M)$ have rank equal to
  the rank of $P$ by \cref{prop:piececut}, while
  $P = (P\cap M) \cup (P\cap \iota(M))$ is a finite decomposition,
  contradicting irreducibility of $P$.
  
  Now suppose $P\subset M\cap M'$ where $M\in \xi$ and
  $M'\in \iota(\xi)$, and there is $M''\in\xi$ such that both
  $P\doublecap M''$ and $P\doublecap \iota(M'')$ are nonempty. Then
  both $P \cap M''$ and $P\cap \iota(M'')$ have rank equal to the rank
  of $P$ by \cref{prop:piececut}, while
  $P = (P\cap M'') \cup (P\cap \iota(M''))$ is a finite decomposition,
  contradicting irreducibility of $P$.
  
  To see that the second condition irreducibility, suppose $P$ is
  reducible, so that there is a finite decomposition
  $P = (P\cap M) \cup (P\cap \iota(M))$ for some $M\in \xi$, where
  both $P\cap M$ and $P\cap \iota(M)$ have the same rank as
  $P$. Suppose that $P$ is $\xi$-bounded or
  $\iota(\xi)$-bounded. By \cref{prop:piececut}, $P$ must be both
  $\xi$-bounded and $\iota(\xi)$-bounded. But since both
  $P\cap M$ and $P\cap \iota(M)$ are nonempty, $P$ cannot be thin.

  To finish the proof, we assume $P$ is thin and show that $P$ is
  $\xi$-bounded or $\iota(\xi)$-bounded for each $\xi\in \Xi$ if and
  only if its limit set is a closed cell in $L(\Delta)$.  Let $\Xi_P$
  be the set of $\xi\in \Xi$ for which $P$ is $\xi$-bounded. Consider
  $\Xi_P \cap \iota(\Xi_P)$, the set of $\xi$ for which $P$ is both
  $\xi$-bounded and $\iota(\xi)$-bounded. Consider the subsphere
  \[
    S = \bigcap_{\xi \in \Xi_P\cap \iota(\Xi_P)} L(\xi).
  \]
  Now, $L(P)$ is the intersection of $S$ with a closed half-space for
  each $\beta \in \Xi$ for which $P$ is $\beta$-bounded but not
  $\iota(\beta)$-bounded. The desired result now follows because the
  resulting set is a closed cell in $S$ precisely when this
  intersection includes exactly one half-space
  $M\in \beta\cup \iota(\beta)$ for each
  $\beta \in \Xi - (\Xi_P \cap \iota(\Xi_P))$.
\end{proof}

\subsection{Rank and height}

We define here the notion of rank and height of a general polyhedral
sets that will be used to define a normal series in the piecewise
isometry group $PI(\Delta)$ and allow for inductive arguments.

\begin{prop} \label{prop:reduction} Every nonempty polyhedral set $X$
  admits a finite decomposition into irreducibles.
\end{prop}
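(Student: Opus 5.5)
It suffices to treat a single nonempty convex polyhedral set $P$: a polyhedral set $X$ has a finite decomposition $P_1,\dotsc,P_m$ by convex polyhedral sets, and decompositions of the $P_i$ into irreducibles assemble into one of $X$. For $P$, we cut along finitely many hyperplanes of $\Omega$ until condition \eqref{cond-bound} of \cref{lem:irred-form} holds, and then invoke that lemma.

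\emph{Step 1: boundedness.} There are only finitely many parallelism classes, so only finitely many pairs $\{\xi,\iota(\xi)\}$. For each pair with $P$ both $\xi$-unbounded and $\iota(\xi)$-unbounded, choose one $M\in\xi$ and cut every current piece $Q$ into $Q\doublecap M$ and $Q\doublecap\iota(M)$, discarding empty pieces. Afterwards each piece lies in $M$ or in $\iota(M)$, so it is bounded with respect to that family. Boundedness persists under further intersection, so after treating all such pairs every piece is $\xi$-bounded or $\iota(\xi)$-bounded for every $\xi\in\Xi$. These are essential intersections of half-spaces with $P$, so they form a finite decomposition of $P$.

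\emph{Step 2: thinness.} Take a piece $Q$ and a $\xi$ for which $Q$ is both $\xi$-bounded and $\iota(\xi)$-bounded, that is, $Q\subset b_Q(\xi)\cap b_Q(\iota(\xi))$. The parallel hyperplanes of $\Omega_H$ are discretely ordered, because translation distances between parallel hyperplanes are bounded below, as in \cref{lem:stab-decomp}. Hence only finitely many hyperplanes of $\Omega_H$ lie strictly between the two boundaries. Cut $Q$ along all of them, keeping the nonempty essential pieces. Each resulting piece lies between consecutive hyperplanes, that is, in a minimal $\xi$-layer, so it is thin with respect to $\xi$. Cutting preserves both this property and the property from Step 1. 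Applying this for each of the finitely many relevant classes therefore yields pieces that are thin and satisfy \eqref{cond-bound}.

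By \cref{lem:irred-form} each piece is irreducible, which gives the required decomposition. The main point needing care is that Step 2 introduces no new doubly-unbounded directions and that thinness in one family survives cutting in another. Both hold because bounds only tighten under intersection: if $Q'\subset Q$ and $Q\subset M$, then $Q'\subset M$. Finiteness of the process rests on finitely many parallelism classes together with the discreteness of each class.
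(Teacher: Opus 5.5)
Your proposal is correct and follows essentially the paper's argument: first cut so that every piece is $\xi$- or $\iota(\xi)$-bounded for every $\xi$, then slice each piece along the finitely many hyperplanes in each doubly-bounded direction down to minimal layers, and conclude by condition (2) of \cref{lem:irred-form}. You leave one point implicit: a piece might a priori become doubly bounded in a new family after later cuts, and your remark about ``no new doubly-unbounded directions'' covers only the trivial direction. The paper settles this by noting that such cuts leave the limit set unchanged, so by \cref{lem:unboundeddirection} the pieces keep the same boundedness properties. Alternatively, your iteration still terminates, because lying in a minimal $\xi$-layer is inherited by sub-pieces and there are finitely many families.
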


\begin{proof}
  It suffices to show every convex polyhedral set $P$ admits a finite
  decomposition into irreducibles. We make the following reduction:
  Fix hyperplanes $H_1,\dotsc, H_n\in \Omega$ representing each
  parallelism class of hyperplane. It suffices to decompose the
  essential intersection of $P$ with the closure of each connected
  component of $V - \bigcup_{i=1,\dotsc, n} H_i$. To that end, we will
  suppose that for each $\xi\in \Xi$, our set $P$ is $\xi$-bounded or
  $\iota(\xi)$-bounded.
  
  Let $\mathcal{Q}$ be the collection of nonempty pieces formed by
  intersecting $P$ with every thin $\xi$-layer, as defined
  following \cref{def:thin}, for which $P$ is both $\xi$-
  and $\iota(\xi)$-bounded. That is, we `chop up' $P$ as much as
  possible while slicing only in directions that intersect $P$ in
  finitely many hyperplanes. That is, $\mathcal{Q}$ is the collection
  of all minimal nonempty polyhedral sets of the form
  \[ P \cap
    \bigcap_{\xi\in \Xi_P \cap \iota(\Xi_P)} M_\xi \cap M_\xi' \text{
      where } M_\xi\in \xi \text{ and }M_\xi'\in \iota(\xi),
  \]
  where $\Xi_P$ is the set of $\xi\in \Xi$ such that $P$ is
  $\xi$-bounded. Clearly $\mathcal{Q}$ forms a finite decomposition of
  $P$. We claim that each $Q\in \mathcal{Q}$ is irreducible.

  To that end, consider any $Q\in \mathcal{Q}$. To see that $Q$ is
  irreducible, we check the two conditions of
  \cref{lem:irred-form}. The first condition of \cref{lem:irred-form}
  follows immediately from the fact that $Q$ is a subset of $P$, which
  satisfies the condition by the reduction at the start of the proof.
  To see that $Q$ is thin, suppose $Q$ is both $\xi$-bounded and
  $\iota(\xi)$-bounded for some $\xi\in \Xi$.  By
  \cref{lem:unboundeddirection} we know $L(Q) = L(P)$, so $Q$ and $P$
  shared the same boundedness properties, so
  $\xi \in \Xi_P\cap \iota(\Xi_P)$. Thinness of $Q$ now follows
  from minimality of its defining intersection.
\end{proof}

\begin{definition}
  The {\em rank} of a polyhedral set $X$, denoted $\rk(X)$, is the
  maximal rank among all convex polyhedral subsets $P\subset X$.
\end{definition}

\begin{prop} \label{prop:height-well-def} Suppose a polyhedral set $X\subset V$ has
  rank $k$. Then the number of irreducible convex polyhedral sets of
  rank $k$ in a finite decomposition of $X$ into irreducibles is
  independent of choice of finite decomposition into irreducibles.
\end{prop}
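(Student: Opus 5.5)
The plan is the standard common-refinement argument. Given two finite decompositions $\mathcal{P}=\{P_1,\dotsc,P_m\}$ and $\mathcal{Q}=\{Q_1,\dotsc,Q_n\}$ of $X$ into irreducibles, consider the essential intersections $P_i\doublecap Q_j$. For fixed $i$, the nonempty sets among $P_i\doublecap Q_1,\dotsc,P_i\doublecap Q_n$ form a finite decomposition of $P_i$. They are convex polyhedral and pairwise essentially disjoint. They cover $P_i$ because the $Q_j$ cover $X$ and the pieces with empty interior lie in finitely many hyperplanes; since $P_i$ is the closure of its interior, the pieces with nonempty interior already cover $P_i$. Symmetrically, for fixed $j$ the nonempty sets among $P_1\doublecap Q_j,\dotsc,P_m\doublecap Q_j$ form a finite decomposition of $Q_j$.

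Now apply \cref{def:irred}. If $\rk(P_i)=k$, irreducibility of $P_i$ gives exactly one $j$ with $\rk(P_i\doublecap Q_j)=k$. If $\rk(P_i)<k$, then every $P_i\doublecap Q_j$ has rank $<k$ by monotonicity of rank. Hence the number of rank-$k$ members of $\mathcal{P}$ equals the number $N$ of pairs $(i,j)$ with $\rk(P_i\doublecap Q_j)=k$.

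By the symmetric argument, with $\rk(Q_j)\le \rk(X)=k$ and irreducibility of the $Q_j$, the number of rank-$k$ members of $\mathcal{Q}$ also equals $N$. We need $\rk(Q_j)\le k$ for the monotonicity step. This holds because $\rk(X)$ is defined as a maximum over convex polyhedral subsets of $X$, so no piece has rank exceeding $k$. Double counting the pairs then gives equality of the two counts.

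The only point needing care is that the intersections genuinely form finite decompositions in the sense required by \cref{def:irred}. Since the definition of irreducibility quantifies over all finite decompositions, it is enough to verify covering and essential disjointness, and the discarded empty-interior pieces do no harm. The covering step is where the work lies: a point of $P_i$ in the interior of $P_i$ but on the boundary of several $Q_j$ must still lie in some $P_i\doublecap Q_j$ with nonempty interior. I would show this by noting that a neighbourhood of the point meets the interiors of the $P_i\cap Q_j$, and that finitely many closed sets covering an open set have their essential parts covering it.
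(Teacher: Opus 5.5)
Your proof is correct and is essentially the paper's argument. The paper uses irreducibility of each rank-$k$ piece $P_i$ to get a unique $Q_{j_i}$ with $\rk(P_i\cap Q_{j_i})=k$, then checks that $i\mapsto j_i$ is a bijection onto the rank-$k$ pieces of the other decomposition; your set of pairs $(i,j)$ with $\rk(P_i\doublecap Q_j)=k$ is exactly the graph of that bijection, so the double count is the same argument. Your use of essential intersections, together with the covering check, is slightly more careful than the paper's plain $P_i\cap Q_j$: a plain intersection with empty interior is not a convex polyhedral set and can even have rank $k$.
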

\begin{proof}
  Suppose $X$ has two finite decompositions into irreducible convex
  polyhedral sets $P_1,\dotsc, P_m$ and $Q_1,\dotsc, Q_n$, where the
  rank-$k$ irreducibles in the former are $P_1,\dotsc, P_d$ for
  $d\leq m$. For each $i=1,\dotsc, d$, we know that
  $P_i \cap Q_1, \dotsc, P_i \cap Q_n$ is a finite decomposition of
  $P_i$, so by irreducibility there exists a unique index
  $1\leq j_i \leq n$ such that $rk(P_i \cap Q_{j_i}) = k$. Since
  $\rk(P_i\cap Q_{j_i}) \leq \rk(Q_{j_i})\leq k$, we conclude that
  $\rk(Q_{j_i}) = k$. Therefore the assignment $i \mapsto j_i$ is a
  well-defined map from $\set{1,\dots, d}$ to the set of indices $j$
  for which $\rk(Q_j)=k$. An inverse function is determined by
  assigning to each such index $j$ the unique index $i_j$ such that
  $rk(Q_j \cap P_{i_j}) = k$, so the proof is complete.
\end{proof}

\begin{definition}
  The {\em height} of a polyhedral set $X\subset V$, denoted
  $\height(X)$, is the number of convex polyhedral sets of rank
  $\rk(X)$ in any finite decomposition of $X$ into irreducibles.
\end{definition}

\subsection{Alcoves}

Here we define irreducible convex polyhedral sets of
particularly nice form. Recall that a {\em tile} is a top-dimensional
cell of the tessellation $\Delta$. That is, a tile is a connected
component of $V - \bigcup_{H\in \Omega} H$.
\begin{definition}
  Given a tile $\tau \in \Delta$ and a spherical cell $\sigma \in
  L(\Delta)$, the {\em alcove} $A[\tau,\sigma]$ is the intersection of
  all half-spaces $M \in \mathcal{H}$ such that $\tau \subset M$ and
  $\sigma \subset L(M)$.
\end{definition}

Note that every tile $\tau$ is itself an alcove, $\tau = A[\tau,
\emptyset]$. 

\begin{lemma} \label{lem:alcove-basics} For any tile $\tau \in \Delta$
  and open cell $\sigma\in L(\Delta)$, the alcove $A[\tau, \sigma]$
  has limit set equal to the closure of $\sigma$ and is irreducible.
\end{lemma}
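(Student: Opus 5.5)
Write $A = A[\tau,\sigma]$ and let $\bar\sigma$ denote the closure of $\sigma$. The plan is to compute the boundedness data of $A$ directly from the definition and then invoke the characterization of irreducibility in \cref{lem:irred-form}, condition (3).

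\textbf{Step 1: $A$ is a nonempty convex polyhedral set.} For each $\xi\in\Xi$ at most one half-space of $\xi$ can be a minimal one containing $\tau$. Indeed, the half-spaces $M\in\xi$ with $\tau\subset M$ form an up-closed subset of the discretely ordered chain $\xi$, and this subset has a least element $m_\xi(\tau)$, namely the half-space bounded by the hyperplane of $\Omega_H$ adjacent to $\tau$ on that side. Hence
\[
  A=\bigcap\set{ m_\xi(\tau) \suchthat \xi\in\Xi,\ \bar\sigma\subset L(\xi)},
\]
which is a finite intersection, since $\Xi$ is finite. The set $A$ contains $\tau$, so it has nonempty interior.

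\textbf{Step 2: the limit set.} By \cref{eq:limitsetdesc}, or directly because $A$ is an intersection of half-spaces containing a common point, we have $L(A)=\bigcap\set{L(\xi)\suchthat \bar\sigma\subset L(\xi)}$. The section on the limit complex notes that a closed cell is an intersection of hemispheres $L(\xi_i)$, and it is then the intersection of all hemispheres containing it. Thus $L(A)=\bar\sigma$. I expect this step to be the main point needing care. If the stated fact is not sufficient, I would argue as follows. Take a point $v\notin\bar\sigma$. Since $\bar\sigma$ is cut out by the great subspheres $L(H)$, some hemisphere $L(\xi)\supset\bar\sigma$ excludes $v$. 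Concretely, if $\sigma$ lies in the subsphere $\partial L(\xi)$, use whichever of $\xi,\iota(\xi)$ excludes $v$; otherwise $\sigma$ lies in the open side of one of them, and we use that one.

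\textbf{Step 3: boundedness.} I will show that for every $\xi\in\Xi$, $A$ is $\xi$-bounded or $\iota(\xi)$-bounded. Since $\sigma$ is an open cell, it lies either in $\partial L(\xi)$ or in one of the two open hemispheres. In every case $\bar\sigma\subset L(\xi)$ or $\bar\sigma\subset L(\iota(\xi))$, and the corresponding $m(\tau)$ is one of the defining half-spaces of $A$.

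\textbf{Step 4: thinness.} Suppose $A$ is both $\xi$-bounded and $\iota(\xi)$-bounded. By \cref{lem:unboundeddirection}, $\bar\sigma\subset L(\xi)\cap L(\iota(\xi))$, so both $m_\xi(\tau)$ and $m_{\iota(\xi)}(\tau)$ appear in the defining intersection. Therefore $A$ is contained in the minimal $\xi$-layer $m_\xi(\tau)\cap m_{\iota(\xi)}(\tau)$ containing $\tau$, i.e.\ the strip between adjacent parallel hyperplanes. Hence for every $M\in\xi$ we get $A\doublecap M\in\{A,\emptyset\}$.

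\textbf{Conclusion.} Steps 3 and 4 give condition (2) of \cref{lem:irred-form}, and Step 2 gives condition (3). Either one shows that $A$ is irreducible.
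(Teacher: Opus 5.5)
Your proof is correct and is essentially the paper's argument. The limit set is identified by noting that any point outside $\bar\sigma$ is excluded by some hemisphere $L(\xi)\supset\bar\sigma$ whose $\tau$-side half-space is among the defining half-spaces of $A[\tau,\sigma]$. Irreducibility then follows from \cref{lem:irred-form}, once you observe that whenever $A[\tau,\sigma]$ is both $\xi$- and $\iota(\xi)$-bounded it lies in the thin layer $b_\tau(\xi)\cap b_\tau(\iota(\xi))$ around $\tau$.

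One small correction: condition (3) of \cref{lem:irred-form} also requires thinness, so it is Steps 2 and 4 together, not Step 2 alone, that establish it.
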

\begin{proof}
  Clearly $\sigma \subset L(A[\tau,\sigma])$, and certainly
  $L(A[\tau, \sigma])$ is closed, as it is the intersection of closed
  hemispheres. On the other hand, for any point $p\in \partial V$ not
  in the closure of $\sigma$, by definition of the cells of the
  spherical complex $L(\Delta)$ there is a half-space $M\in \mathcal{H}$
  such that $L(M)$ contains $p$ in its interior and is disjoint from
  $\sigma$. We may choose $M$ to be disjoint from $\tau$, so that the
  half-space $V-M$ contains $\tau$. Since $p$ is in the interior of
  $L(M)$, it does not belong to $L(V-M)$, and so $p\notin
  L(A[\tau,\sigma])$.

  To see that $A[\tau, \sigma]$ is irreducible, by
  \cref{lem:irred-form} it remains only to check that it is thin. This
  is clear; if $A[\tau, \sigma]$ is contained in an $\xi$-layer
  $M\cap M'$, where $M\in \xi$ and $M'\in \iota(\xi)$, then it is
  contained in the thin $\xi$-layer
  $b_\tau(\xi) \cap b_\tau(\iota(\xi))$.
\end{proof}

\begin{prop} \label{lpropem:alcove-decomp}
  Every polyhedral set $P$ admits a finite decomposition into alcoves.
\end{prop}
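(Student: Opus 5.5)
The plan is to use \cref{prop:reduction} to reduce to the case where $P$ is irreducible, and then induct on the rank $k=\rk(P)$. The key step is to carve off one alcove with the same limit set as $P$ and check that what remains has strictly smaller rank.

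\textbf{Base case and setup.} If $\rk(P)=0$, then $P$ is a bounded convex polyhedral set. It is therefore a finite union of closures of tiles, and each tile is the alcove $A[\tau,\emptyset]$. Now let $P$ be irreducible of rank $k\geq 1$. By \cref{lem:irred-form}, $P$ is thin and $L(P)$ is the closure of an open cell $\sigma\in L(\Delta)$. Fix any tile $\tau\subset P$.

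\textbf{Step 1: $A[\tau,\sigma]\subset P$.} By \eqref{eq:piecebounds}, $P=\bigcap_{\xi\in\Xi_P} b_P(\xi)$. Each $b_P(\xi)$ contains $\tau$. Each also satisfies $\overline\sigma=L(P)\subset L(b_P(\xi))$, by \cref{lem:unboundeddirection}. Hence each $b_P(\xi)$ is one of the half-spaces intersected in the definition of $A[\tau,\sigma]$, and so $A[\tau,\sigma]\subset P$.

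\textbf{Step 2: $A[\tau,\sigma]$ is cut out by finitely many half-spaces.} Within each maximal nested family, only the minimal admissible half-space matters, and there are finitely many families. So $A[\tau,\sigma]=M_1\cap\dots\cap M_n$, where $M_i\in\xi_i$, $\tau\subset M_i$ and $\sigma\subset L(M_i)$. This gives
\[
P = A[\tau,\sigma]\ \cup\ \bigcup_{i=1}^n \bigl(P\cap M_1\cap\dots\cap M_{i-1}\cap \iota(M_i)\bigr),
\]
and after discarding the pieces with empty interior this is a finite decomposition. It suffices to show that each piece $P\doublecap\iota(M_i)$ is empty or has rank $<k$. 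Each of the remaining pieces is a convex polyhedral subset of $P\doublecap\iota(M_i)$, so it then also has rank $<k$. By \cref{prop:reduction} each piece decomposes into irreducibles of rank $<k$, and the induction hypothesis then decomposes those into alcoves.

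\textbf{Step 3 (main obstacle): the rank drops.} Suppose $P\doublecap\iota(M_i)\neq\emptyset$. The case analysis uses irreducibility together with \cref{prop:piececut}.
\begin{itemize}
\item $P$ is not unbounded in both $\xi_i$ and $\iota(\xi_i)$, by \cref{lem:irred-form}.
\item If $P$ were $\iota(\xi_i)$-bounded but $\xi_i$-unbounded, then \cref{lem:unboundeddirection} would give $L(P)\not\subset L(\xi_i)=L(M_i)$. This contradicts $\sigma\subset L(M_i)$, because $L(M_i)$ is closed and $L(P)=\overline\sigma$.
\item If $P$ is both $\xi_i$- and $\iota(\xi_i)$-bounded, then thinness applies. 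Since $P\doublecap M_i\supset\tau$ is nonempty, thinness forces $P\subset M_i$, so the piece is empty.
\item In the remaining case $P$ is $\xi_i$-bounded and $\iota(\xi_i)$-unbounded. Cutting by $\iota(M_i)\in\iota(\xi_i)$ falls under \cref{prop:piececut}\eqref{boundcutpart} with the roles of $\xi_i$ and $\iota(\xi_i)$ swapped. This gives $\rk(P\doublecap\iota(M_i))<k$.
\end{itemize}
This completes the induction.

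The delicate point is the second case of Step 3. It needs $L(M_i)$ to be closed, so that $\sigma\subset L(M_i)$ upgrades to $\overline\sigma\subset L(M_i)$. It also needs the exact form of \cref{lem:unboundeddirection} to rule that case out.
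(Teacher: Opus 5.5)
Your proof is correct and follows essentially the same route as the paper: induct on rank, reduce to irreducible $P$ via \cref{prop:reduction}, carve off the alcove $A[\tau,\sigma]$ with $\overline{\sigma}=L(P)$ by cutting along its finitely many bounding hyperplanes, and show that the remaining pieces have smaller rank. The one difference is how the rank drop is obtained: the paper gets it in one line from irreducibility of $P$ via \cref{prop:height-well-def}, using that $A[\tau,\sigma]$ itself has rank $k$ by \cref{lem:alcove-basics}, whereas your Step~3 proves it directly by a case analysis with \cref{lem:irred-form}, thinness and \cref{prop:piececut}.
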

\begin{proof}
  Proceed by induction on the rank of $P$. Every polyhedral set of
  rank 0 is a finite union of tiles, which are themselves alcoves, so
  the base case is immediate.

  Now suppose $P$ has rank $k$. By \cref{prop:reduction}, it suffices
  to consider the case that $P$ is irreducible. Fix a tile
  $\tau \subset P$ and consider the alcove $A[\tau, L(P)]$, which is
  well-defined by \cref{lem:irred-form}. Cutting $P$ along the
  hyperplanes bounding $\tau$ produces a finite decomposition of $P$,
  which by \cref{prop:reduction} can be refined into a finite
  decomposition into irreducibles $Q_1, Q_2,\dotsc, Q_\ell$ where
  $Q_1 = A[\tau, L(P)]$. By \cref{prop:height-well-def}, each $Q_i$ has
  rank less than $k$ when $i>1$, so we are done by induction.
\end{proof}

\subsection{Canonical translations} \label{sec:endotrans}

Let $P\subset V$ be a convex polyhedral set. An {\em edge-ray} of $P$
is a ray in $V$ that is contained in a 1-dimensional face of $P$. If
$r$ is an edge-ray of $P$, then $L(r)$ is a vertex of the complex
$L(\Delta)$ that is an extremal point in the spherical convex set
$L(P)$. Conversely, if $v\in L(P)$ is an extremal point, then there is
an edge-ray $r$ in $P$ with $L(r) = v$.  It follows from the standard
characterization of a polyhedron as the sum of a polytope and its
recession cone \cite[1.12.ii]{Zieg} that each edge-ray $r$ of $P$ is
parallel to an extreme ray of its recession cone $\cone(P)$, and
conversely for each extreme ray of $\cone(P)$ there is a parallel
edge-ray of $P$.

For each edge-ray $r$ of $P$, by \cref{cor:line-trans} there is a
nontrivial translation in $T$ stabilizing the line $\cyc{r}$ spanned
by $r$. Let $t_r \in T$ be the unique element of $stab_T(\cyc{r})$ of
minimal translation length such that $r t_r\subset r$. (Recall that
$T$ acts on the right.) Since $t_r$ is translation by a vector $v_r$
in $\cone(P)$, we see $P t_r \subset P$. Now let $\Xi_r^+$ be the set
of all $\xi \in \Xi$ such that $r$ has nontrivial intersection with
every half-space $M\in \xi$. Equivalently, $\Xi_r^+$ is the set of
$\xi \in \Xi$ such that $L(r) \notin L(\iota(\xi))$. For each
$\xi \in \Xi_r^+$, there is a strict containment
$b_{P t_r}(\xi)\subsetneq b_P(\xi)$. On the other hand, for each
$\beta \in \Xi - (\Xi_r^+ \cup \iota(\Xi_r^+))$, we have
$b_{P t_r}(\beta) = b_{P}(\beta)$.

\begin{definition}
  For each irreducible convex polyhedral set $P$, the associated {\em
    canonical translation} $t_P\in T$ is the composition of all $t_r$
  as $r$ runs over representatives of each parallelism class of
  edge-rays of $P$.
\end{definition}
Since the limit set of an irreducible convex polyhedral set $P$ is a
cell $L(P)$ in the spherical complex $L(\Delta)$ by \cref{lem:irred-form},
its canonical translation $t_P$ is the composition of one translation
$t_r$ for each vertex of the cell $L(P)$. In particular, the canonical
translation $t_P$ depends only on the limit set $L(P)$. The canonical
translation sends $P$ `deeply into itself' in the following sense:

\begin{lemma} \label{lem:translation-inside} Suppose $P\subset V$ is
  an irreducible convex polyhedral set with canonical translation
  $t_P \in T$. For each convex polyhedral set $Q \subset P$ such that
  $L(P) = L(Q)$, there is a natural number $N$ such
  that $t_P^N(P) \subset Q$.
\end{lemma}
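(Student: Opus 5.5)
Since $Q$ is a nonempty intersection of half-spaces, the plan is to compare the bounding half-spaces of $P$ and $Q$ family by family, using the description \eqref{eq:piecebounds}. Write $Q = \bigcap_{\xi\in\Xi_Q} b_Q(\xi)$. Because $L(Q)=L(P)$, \cref{lem:unboundeddirection} gives $\Xi_Q=\Xi_P$: $Q$ is $\xi$-bounded exactly when $P$ is. It therefore suffices to find, for each $\xi\in\Xi_P$, an exponent $N_\xi$ with $P t_P^{N} \subset b_Q(\xi)$ for all $N\ge N_\xi$. Since $\Xi_P$ contributes only finitely many distinct half-spaces (\eqref{eq:piecebounds} is a finite intersection up to redundancy, and only finitely many parallelism classes exist), we can then take $N=\max N_\xi$. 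Note also $P t_P^N \subset P$ for all $N$, because $t_P$ translates by a vector in $\cone(P)$.

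Fix $\xi\in\Xi_P$ and split into two cases according to $\iota(\xi)$.

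\emph{Case $\iota(\xi)\in\Xi_P$.} Here $P$ lies in a $\xi$-layer. By thinness (\cref{lem:irred-form}), $P$ lies in a minimal $\xi$-layer, so $P\doublecap M\in\{P,\emptyset\}$ for $M\in\xi$. Since $Q\subset P$ is nonempty with nonempty interior, $b_Q(\xi)\supseteq P$ already, and $N_\xi=0$ works.

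\emph{Case $\iota(\xi)\notin\Xi_P$.} Then $P$ is $\iota(\xi)$-unbounded, so by \cref{lem:unboundeddirection} $L(P)\not\subset L(\iota(\xi))$. Since $L(P)$ is a closed cell (\cref{lem:irred-form}), it is the spherical convex hull of its vertices, and $L(\iota(\xi))$ is a closed hemisphere. Hence some vertex $v$ of $L(P)$ satisfies $v\notin L(\iota(\xi))$. Choose an edge-ray $r$ of $P$ with $L(r)=v$; then $\xi\in\Xi_r^+$.

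From the discussion before the lemma, $b_{Pt_r}(\xi)\subsetneq b_P(\xi)$. Moreover, for every other edge-ray $r'$ the translation $t_{r'}$ is by a vector in $\cone(P)\subset$ the recession cone of $b_P(\xi)$, so it never enlarges $b_\cdot(\xi)$. Since $\xi$ is discretely linearly ordered, each application of $t_P$ therefore moves $b_{Pt_P^N}(\xi)$ strictly down at least one step in $\xi$. Because $b_Q(\xi)\subseteq b_P(\xi)$ sits only finitely many steps below, some $N_\xi$ gives $b_{Pt_P^{N}}(\xi)\subseteq b_Q(\xi)$ for all $N\ge N_\xi$. This uses that $t_P$ translates by a fixed vector $w$ with $(w,\nu_\xi)>0$ for the inward normal direction $\nu_\xi$ decreasing in $\xi$; that positivity is exactly $\xi\in\Xi_r^+$ combined with the nonnegativity of the other summands.

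The main obstacle is making the second case rigorous. Two points need care: the claim that a vertex of the cell $L(P)$ lies outside $L(\iota(\xi))$, which needs the cell to be the convex hull of its vertices (if $L(P)$ contains antipodal points this breaks, but irreducibility and the cell structure should exclude it, otherwise the rank-$0$ and sphere cases are handled trivially); and the monotonicity of $b_{\cdot}(\xi)$ under the translations $t_{r'}$. I would handle both by passing to normal vectors, as in the proof of \cref{lem:unboundeddirection}, and write $w=\sum_r v_r$ with each $(v_r,\nu_\xi)\ge 0$, strict for at least one $r$.
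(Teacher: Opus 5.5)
Your proposal is correct and is essentially the paper's argument. You compare $b_{Pt_P^N}(\xi)$ with $b_Q(\xi)$ family by family, use an edge-ray $r$ with $\xi\in\Xi_r^+$ to see that each application of $t_P$ pushes $b(\xi)$ strictly down the discretely ordered family $\xi$, and take the maximum exponent.

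You are, if anything, more careful than the paper, which leaves two points implicit: the doubly bounded families, which you handle by thinness, and the fact that the remaining factors $t_{r'}$ of $t_P$ never move $b(\xi)$ back up. Your worry about antipodal points in $L(P)$ is also harmless, since irreducibility together with the normals $v_\xi$ spanning $V$ forces $\cone(P)$ to be pointed.
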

\begin{proof}
  Since $L(Q) = L(P)$, for each $\xi \in \Xi$ we know $P$ is
  $\iota(\xi)$-unbounded if and only if $Q$ is
  $\iota(\xi)$-unbounded. For each $\xi \in \Xi$ such that $Q$ is
  $\iota(\xi)$-unbounded, it follows from \cref{lem:unboundeddirection}
  that there is an edge-ray $r$ of $P$ such that $\xi \in \Xi_r^+$. We
  therefore have a descending sequence
  $b_P(\xi) \supsetneq b_{P t_r}(\xi) \supsetneq b_{P t^2_r}(\xi)
  \supsetneq \dotsb$, from which we conclude there is a descending
  sequence
  \[
    b_P(\xi) \supsetneq b_{P t_P}(\xi) \supsetneq
    b_{P t^2_P}(\xi) \supsetneq \dotsb
  \]
  Since there are finitely many half-spaces between $b_P(\xi)$ and
  $b_Q(\xi)$, there is a natural number $n_\xi$ such that
  $b_{P t_r^{n_\xi}}(\xi) \subset b_Q(\xi)$. Let $N$ be the
  maximum value of $n_\xi$ where $Q$ is
  $\iota(\xi)$-unbounded. Then by construction we have
  $P t_P^N \subset Q$, which completes the proof.
\end{proof}

\subsection{Germs}

In this section we will define the equivalence classes of irreducible
convex polyhedral sets, called germs, and prove that the piecewise
isometry group acts on the set of germs.

\begin{definition}
  Suppose $P$ and $Q$ are nonempty irreducible convex polyhedral
  sets. Say that $P$ and $Q$ are {\em commensurable}, written
  $P\sim Q$, if $P\doublecap Q \neq \emptyset$ and $L(P) = L(Q)$.
\end{definition}

The following alternative characterizations of commensurability will
prove useful. Recall that the rank of the empty polyhedral set is $-1$.

\begin{lemma} \label{lem:commens-characterizations}
  Given nonempty irreducible convex polyhedral sets $P$ and $Q$ , the
  following are equivalent:
  \begin{enumerate}
  \item $P$ and $Q$ are commensurable,
  \item $L(P) = L(P\doublecap Q) = L(Q)$,
  \item $rk(P) = rk(P\doublecap Q) = rk(Q)$.
\end{enumerate}
\end{lemma}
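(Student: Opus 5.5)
We will prove the cycle of implications (1)$\Rightarrow$(2)$\Rightarrow$(3)$\Rightarrow$(1). Throughout we use the facts, recorded earlier, that $L(C\cap D)=L(C)\cap L(D)$ for convex sets with nonempty intersection, that $\rk$ is one more than the dimension of the limit set (\cref{lem:limitdim}), and that limit sets of irreducibles are closed cells of $L(\Delta)$ (\cref{lem:irred-form}).

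For (1)$\Rightarrow$(2): if $P\sim Q$ then $P\doublecap Q=P\cap Q$ is nonempty, so $L(P\doublecap Q)=L(P)\cap L(Q)=L(P)=L(Q)$. For (2)$\Rightarrow$(3): the hypothesis forces $L(P\doublecap Q)$ to be a nonempty-or-equal limit set, and since rank is determined by the dimension of the limit set (with the convention that the empty limit set corresponds to rank $0$ for a nonempty set), equal limit sets give equal ranks. Here we must check one subtlety: if $P\doublecap Q=\emptyset$ its rank is $-1$ while its limit set would be empty, which coincides with $L(P)$ only when $P$ is bounded of rank $0$. So in (2) we should read $L(P\doublecap Q)$ as requiring $P\doublecap Q\neq\emptyset$ (or argue directly that (2) as stated with the empty intersection is excluded). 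We plan to handle this by noting that (2) is intended with $P\doublecap Q$ nonempty, and in any case (3) excludes emptiness since $\rk(P)\ge 0$.

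The main step is (3)$\Rightarrow$(1). Suppose $\rk(P)=\rk(P\doublecap Q)=\rk(Q)=k\ge 0$. Then $P\doublecap Q$ is nonempty, and $L(P\cap Q)=L(P)\cap L(Q)$ is a closed subset of both $L(P)$ and $L(Q)$ of dimension $k-1$. Since $L(P)$ and $L(Q)$ are closed cells of $L(\Delta)$ of dimension $k-1$ by \cref{lem:irred-form}, and $L(P)\cap L(Q)$ is a subcomplex (an intersection of closed hemispheres $L(\xi)$) of full dimension $k-1$ in each, it must contain an open $(k-1)$-cell of each. The open top cells of the closed cell $L(P)$ consist of the single open cell $\sigma$ with $\overline\sigma=L(P)$; hence $\sigma$ meets, and so (cells being disjoint or equal) equals, the open top cell of $L(Q)$. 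Therefore $L(P)=\overline\sigma=L(Q)$, and together with $P\doublecap Q\ne\emptyset$ this is commensurability.

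The expected obstacle is justifying that a $(k-1)$-dimensional subcomplex of a closed $(k-1)$-cell must contain that cell's interior, i.e.\ that $L(P)\cap L(Q)$ is a union of closed cells of $L(\Delta)$ and the only $(k-1)$-cell in $L(P)$ is its interior. We would argue this from the description of cells as intersections of hemispheres $L(\xi)$ together with the observation in the text that such an intersection is a union of closed cells; alternatively, in the degenerate case $k=0$ all limit sets are empty and (1) follows trivially from nonemptiness of $P\doublecap Q$.
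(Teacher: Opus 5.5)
Your proof is correct and is essentially the paper's argument. In both, the decisive step is that $L(P\doublecap Q)$ is a $(k-1)$-dimensional subcomplex of the single closed $(k-1)$-cell $L(P)$ (and likewise of $L(Q)$), so it must equal that cell. The remaining differences are cosmetic: your use of $L(P\cap Q)=L(P)\cap L(Q)$ for (1)$\Rightarrow$(2) replaces the paper's ray argument with an equivalent fact, and your cycle (1)$\Rightarrow$(2)$\Rightarrow$(3)$\Rightarrow$(1) only reorders the paper's (1)$\Leftrightarrow$(2) and (2)$\Leftrightarrow$(3).

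Your caveat about (2) is well founded. For two distinct tiles $P,Q$, all three limit sets are empty, so (2) holds literally while (1) and (3) fail. The paper's ``clearly the second condition implies the first'' therefore tacitly needs the same convention $P\doublecap Q\neq\emptyset$ that you impose.
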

\begin{proof}
  We prove the first two conditions imply each other, and the latter
  two conditions imply each other.

  Clearly the second condition implies the first. On the other hand,
  suppose $P$ and $Q$ are commensurable. We clearly have
  $L(P\doublecap Q) \subset L(P)$. Conversely, any direction in $L(P)$
  is represented by a ray $r$ contained in $P$ originating at a point
  in $P\doublecap Q$. Since $L(P) = L(Q)$ this ray is also contained
  in $Q$, hence in $P\doublecap Q$. This proves
  $L(P) = L(P\doublecap Q)$, and the equality
  $L(P\doublecap Q) = L(Q)$ follows analogously.

  Now for the latter two conditions. Since $rk(P) = dim(L(P))+1$,
  clearly the equalities $L(P) = L(P\doublecap Q) = L(Q)$ imply
  $rk(P) = rk(P\doublecap Q) = rk(Q)$. On the other hand, suppose
  $rk(P) = rk(P\doublecap Q) = rk(Q)$. Since $L(P\doublecap Q)$ is a
  subcomplex of the single cell $L(P) \subset L(\Delta)$ whose
  codimension is equal to the corank of $P\doublecap Q \subset P$, the
  condition that $rk(P) = rk(P\doublecap Q)$ directly implies
  $L(P) = L(P\doublecap Q)$. We analogously see that
  $L(P\doublecap Q) = L(Q)$, completing the proof.
\end{proof}

\begin{lemma} \label{lem:comm-equiv-rel}
  Commensurability is an equivalence relation on the set of
  irreducible convex polyhedral sets.
\end{lemma}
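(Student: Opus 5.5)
Reflexivity and symmetry are immediate from the definition: $P\doublecap P = P\neq\emptyset$ and $L(P)=L(P)$, and both the condition $P\doublecap Q\neq\emptyset$ and $L(P)=L(Q)$ are symmetric in $P$ and $Q$. The content is transitivity. So suppose $P\sim Q$ and $Q\sim R$; we need $L(P)=L(R)$, which is trivial, and $P\doublecap R\neq\emptyset$, which is the real point.

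The plan is to use \cref{lem:translation-inside} to push a piece of $Q$ deep inside both $P$ and $R$. By \cref{lem:commens-characterizations}, $P\doublecap Q$ is a convex polyhedral subset of $Q$ with $L(P\doublecap Q)=L(Q)$, and likewise $Q\doublecap R\subset Q$ with $L(Q\doublecap R)=L(Q)$. Since $Q$ is irreducible, \cref{lem:translation-inside} gives natural numbers $N_1,N_2$ with $Q t_Q^{N_1}\subset P\doublecap Q$ and $Q t_Q^{N_2}\subset Q\doublecap R$. We have $Q t_Q\subset Q$, because $t_Q$ is a composition of translations $t_r$ each satisfying $P t_r\subset P$; hence the sets $Q t_Q^N$ are nested decreasing. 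Taking $N=\max(N_1,N_2)$, we get $Q t_Q^{N}\subset P\cap R$.

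Since $Q t_Q^N$ is a translate of the nonempty convex polyhedral set $Q$, it has nonempty interior. Therefore $P\cap R$ has nonempty interior, so $P\doublecap R\neq\emptyset$. Combined with $L(P)=L(Q)=L(R)$, this gives $P\sim R$.

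The only delicate point is checking the hypotheses of \cref{lem:translation-inside}: its statement asks for a convex polyhedral $Q'\subset Q$ with $L(Q')=L(Q)$. This is exactly what \cref{lem:commens-characterizations}(2) supplies for $Q'=P\doublecap Q$ and $Q'=Q\doublecap R$, and these essential intersections are nonempty, hence convex polyhedral. I do not expect a serious obstacle beyond this bookkeeping.
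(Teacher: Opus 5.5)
Your proof is correct and follows essentially the same route as the paper, which also uses \cref{lem:translation-inside} to put a translate inside $P\cap R$: it notes $t_P=t_Q=t_R=t$, pushes $P$ into $Q$ by $t^m$ and then $Q$ into $R$ by $t^n$, and concludes $P t^{m+n}\subset P\cap R$. Your variant instead pushes the middle set $Q$ into both $P\doublecap Q$ and $Q\doublecap R$ with the single translation $t_Q$ and takes $N=\max(N_1,N_2)$. This is slightly cleaner, because it applies the lemma literally to subsets of $Q$ and never needs to identify the three canonical translations.
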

\begin{proof}
  Symmetry and reflexivity are clear. To see that transitivity holds,
  we use the second characterization of commensurability in
  \cref{lem:commens-characterizations}.  Suppose that irreducible
  convex polyhedral sets $P,Q,R$ satisfy
  $L(P) = L(P\doublecap Q) =L(Q)= L(Q\doublecap R)= L(R)$. Since
  $L(P)=L(Q)=L(R)$, the canonical translations $t_P$, $t_Q$, and $t_R$
  as defined in \cref{sec:endotrans} are all equal to the same
  translation $t\in T$. By \cref{lem:translation-inside}, there are
  natural numbers $m$ and $n$ such that $t^m(P) \subset Q$ and
  $t^n(Q) \subset R$. It follows that $t^{m+n}(P) \subset P\cap R$, so
  $L(P) = L(P\doublecap R)$. Similarly noting
  $L(P\doublecap R) = L(R)$, the proof is complete.
\end{proof}

\begin{definition}
  A {\em germ} is a commensurability class of irreducible convex
  polyhedral sets. The commensurability class of an irreducible convex
  polyhedral set $C$ is written $[C]$.
  \begin{itemize}
    \item The set of all germs is $\Gamma$.
    \item For each $k\geq 0$, the set of all equivalence classes of
      irreducibles of rank $k$ is $\Gamma^k$.
    \item For any polyhedral set $P$,
      let $\Gamma^k(P)$ be the set of germs $[C]$ where $C$ is an
      irreducible convex polyhedral subset of $P$.
      \end{itemize}
\end{definition}

\begin{lemma} \label{lem:pi-alcove-characterization} Let $P$ be a
  polyhedral set and $f:P\to V$ an essentially injective map. Then $f$
  is a piecewise isometric embedding if and only if for each
  irreducible convex polyhedral subset $C\subset P$, there is some
  natural number $m$ such that the restriction of $f$ to $C t_C^m$ is
  isometric.
\end{lemma}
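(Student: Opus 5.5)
We prove the two implications separately. The forward one uses \cref{lem:translation-inside} together with the irreducibility of $C$. The converse uses a compactness-type argument over the finitely many irreducible pieces, obtained by induction on rank via a decomposition into alcoves.

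\emph{Forward direction.} Suppose $f$ is a piecewise isometric embedding. Fix a finite decomposition $P_1,\dotsc,P_n$ of $P$ on which $f$ is isometric. Let $C\subset P$ be irreducible. Then $C\doublecap P_1,\dotsc, C\doublecap P_n$ is a finite decomposition of $C$. Refining each piece into irreducibles by \cref{prop:reduction}, irreducibility of $C$ gives exactly one piece $Q$ of rank $\rk(C)$, and $Q$ lies in some $P_i$.

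Since $Q\subset C$ and $\rk(Q)=\rk(C)$, we get $L(Q)=L(C)$; this follows as in the proof of \cref{lem:commens-characterizations}, because $L(C)$ is a single closed cell by \cref{lem:irred-form}. By \cref{lem:translation-inside} there is an $m$ with $Ct_C^m\subset Q\subset P_i$, so $f$ is isometric on $Ct_C^m$.

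\emph{Converse.} Assume the condition on every irreducible $C\subset P$. We prove by induction on $k$ that every polyhedral subset $X\subset P$ of rank at most $k$ has a finite decomposition on which $f$ is isometric. For the base case $k=0$, $X$ is a finite union of tiles. Each tile $\tau$ is irreducible with limit set empty, so $t_\tau$ is trivial and the hypothesis says $f$ is isometric on $\tau$.

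For the inductive step, decompose $X$ into irreducibles by \cref{prop:reduction}. It suffices to treat one irreducible $C$ of rank $k$. Choose $m$ as in the hypothesis. The set $Ct_C^m$ is a convex polyhedral subset of $C$ with $L(Ct_C^m)=L(C)$, since it is a translate of $C$ by a vector in $\cone(C)$. Cutting $C$ along the finitely many bounding hyperplanes of $Ct_C^m$ gives a finite decomposition of $C$ in which $Ct_C^m$ is one piece. Refine the remaining pieces into irreducibles. By \cref{prop:height-well-def}, $C$ has height $1$, and $Ct_C^m$ already accounts for that one rank-$k$ piece, so every other piece has rank less than $k$. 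This is exactly the argument of \cref{lpropem:alcove-decomp}. The induction hypothesis handles those lower-rank pieces, and $f$ is isometric on $Ct_C^m$. Combining all of these gives a finite decomposition of $X$, hence of $P$, on which $f$ is isometric.

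Finally, $f$ is essentially injective, so $f$ is a piecewise isometric embedding. The images of interiors of distinct pieces are disjoint, and any boundary ambiguity is absorbed as in the paper's conventions.

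\emph{Main obstacle.} The delicate point is the height count in the converse: we need the complement $C\setminus \operatorname{int}(Ct_C^m)$ to decompose into pieces of strictly smaller rank. This rests on irreducibility (height one) and on $Ct_C^m$ having full rank. A secondary point is checking that $Ct_C^m\subset C$, which holds because $t_C$ translates by a vector in $\cone(C)$.
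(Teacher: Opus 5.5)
Your proposal is correct and follows essentially the same route as the paper. In the forward direction, you find the unique full-rank piece of $C$ inside a piece of an isometric decomposition and apply \cref{lem:translation-inside}; in the converse, you induct on rank, cutting the translate $Ct_C^m$ off each top-rank irreducible so that the remainder has strictly lower rank. Your version is just slightly more explicit than the paper's, for instance in checking that $Ct_C^m\subset C$ with $L(Ct_C^m)=L(C)$.
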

\begin{proof}
  Suppose $f$ is a piecewise isometric embedding, and let $C\subset P$
  be irreducible. Choose a finite decomposition of $P$ into
  irreducibles $Q_1,\dotsc, Q_k$ such that the restriction of $f$ to
  each $Q_i$ is isometric. There is a unique index $j$ such that
  $C\cap Q_j$ has rank equal to that of $C$. By
  \cref{lem:translation-inside} there is some $m$ such that $C t_C^m$
  is a subset of $Q_j$, so $f$ is isometric on $C t_C^m$.

  Conversely, suppose for each irreducible $C\subset P$ there is some
  $m$ such that $f$ is isometric on $C t_C^m$. We prove that $f$ is
  piecewise isometric by induction on the rank of $P$. When $P$ has
  rank $0$, each irreducible $C\subset P$ is a single tile and so
  $t_C$ is trivial. Therefore $f$ is piecewise isometric. For the
  inductive step, let $Q_1,\dotsc, Q_k$ be any finite decomposition of
  $P$ into irreducibles. For each irreducible $Q_j$ of top rank, find
  $m_j$ such that $f$ is isometric on $Q_j t_{Q_j}^{m_j}$. The
  complement of the union of these translations in $P$ is a
  sub-polyhedral set of lower rank by \cref{prop:height-well-def}, so
  we are done by induction.
\end{proof}

There is a natural action of $PI(\Delta)$ on $\Gamma$ preserving rank:
Given a germ $\gamma\in \Gamma$ and $f\in PI(\Delta)$, choose a
representative irreducible $P$ such that $\gamma = [P]$ and by
\cref{lem:pi-alcove-characterization} a natural number $m$ such that
$f$ is isometric on $P t_P^m$. Define the right action
\[
  \gamma f_\ast = [ P t_P^m f ].
\]

\begin{lemma}
  The action of $PI(\Delta)$ on $\Gamma$ is well-defined.
\end{lemma}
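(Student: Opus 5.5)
To prove the lemma, I check three things in turn: that $P t_P^m f$ is again an irreducible convex polyhedral set; that its germ does not depend on the choices of $m$ and of the representative $P$; and that the resulting assignment is a right action, $\gamma (fg)_\ast = (\gamma f_\ast) g_\ast$ with the identity acting trivially.

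\textbf{Step 1: $Pt_P^m f$ is irreducible.} Since $f$ is isometric on $P t_P^m$, there is an element $\phi\in\Isom(\Delta)$ agreeing with $f$ on the interior of $Pt_P^m$. Then $Pt_P^m f$ is the closure of the $\phi$-image of that interior, so it is a convex polyhedral set by \cref{lem:piecebasics}. Elements of $\Isom(\Delta)$ preserve $\Omega$, so they preserve essential intersections, finite decompositions and rank. Hence irreducibility is preserved by $\phi$. Also $Pt_P^m$ is a translate of $P$, so it is irreducible.

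\textbf{Step 2: independence of $m$.} Let $m'\ge m$. The set $Pt_P^{m'}$ is contained in $Pt_P^m$, and both have limit set $L(P)$. The map $f$ agrees with the single isometry $\phi$ on both sets. So $Pt_P^{m'}f\subset Pt_P^m f$, and the two have equal limit sets, namely $L(P)\phi$ for the linear part of $\phi$. By \cref{lem:commens-characterizations} the two images are commensurable. Any two admissible exponents are both dominated by their maximum, so the germ does not depend on $m$.

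\textbf{Step 3: independence of the representative.} Let $Q\sim P$. By \cref{lem:commens-characterizations} we have $L(P)=L(Q)$, so $t_P=t_Q=:t$. Choose $m$ so that $f$ is isometric on both $Pt^m$ and $Qt^m$, via isometries $\phi$ and $\psi$. The set $P\doublecap Q$ is convex polyhedral with limit set $L(P)$, and $(P\doublecap Q)t^m = Pt^m\doublecap Qt^m$. On the interior of this set, $f$ equals both $\phi$ and $\psi$. An isometry is determined by its values on an open set, so $\phi=\psi$. Then $Pt^m f$ and $Qt^m f$ both contain $(P\doublecap Q)t^m\phi$, which is nonempty with interior. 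Their limit sets are equal, both being $L(P)\phi$. Hence they are commensurable, and together with Step 2 the action is well defined on germs.

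\textbf{Step 4: the action axioms.} Given $f,g$, choose $m$ with $f$ isometric (via $\phi$) on $Pt^m$. Set $R=Pt^m f$; by Step 1 it is irreducible, and its canonical translation is $t_R=\phi^{-1}t\phi$. Choose $n$ with $g$ isometric on $Rt_R^n$. Then $fg$ is isometric on $Pt^{m+n}$, and we have $Pt^{m+n}fg = Rt_R^n g$. This gives $\gamma(fg)_\ast=(\gamma f_\ast)g_\ast$. The identity clearly acts trivially.

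\textbf{Main obstacle.} I expect the main obstacle to be the equivariance of canonical translations, namely $t_{C\phi}=\phi^{-1}t_C\phi$. This holds because $t_C$ is built from minimal translations along the edge-ray directions of $L(C)$, and conjugation by $\phi$ carries that data to the corresponding data for $C\phi$. The other delicate point is checking that isometries agreeing on an open set are equal, which is standard.
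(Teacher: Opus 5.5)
Your proposal is correct. Its skeleton (first independence of $m$, then independence of the representative) matches the paper's. The difference lies in how you compare two representatives $P\sim Q$.

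The paper chooses exponents $m,n$ separately for $P$ and $Q$. It then uses \cref{lem:translation-inside} to find $m'$ with $Pt^{m+m'}\subset Qt^n$. This reduces the comparison to the same ``subset with equal limit set'' argument used for independence of $m$: on $Pt^{m+m'}$, $f$ is simply the restriction of the isometry it induces on $Qt^n$.

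You instead take a common exponent $m$. You observe that the local isometries $\phi,\psi$ of $f$ on $Pt^m$ and $Qt^m$ agree on the nonempty open set $\operatorname{int}\bigl((P\doublecap Q)t^m\bigr)$, so they coincide. You then conclude from the overlap together with the equal limit sets $L(P)\phi=L(Q)\phi$.

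Your route avoids \cref{lem:translation-inside} and uses only the definition of commensurability plus rigidity of affine maps. It is exactly the reasoning the paper later invokes, without proof, when it says that $\bar\varphi_\gamma$ and the $T$-valued map in \cref{lem:crplus-mod-gr} are independent of the chosen representative. The paper's nesting, by contrast, needs no rigidity statement, since a single isometry serves for both sets.

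Your Steps 1 and 4 go beyond what the paper checks. The paper never verifies that $Pt_P^m f$ is irreducible, nor that $\gamma(fg)_\ast=(\gamma f_\ast)g_\ast$. Your equivariance $t_{C\phi}=\phi^{-1}t_C\phi$ is the right ingredient for the latter. It is valid because $t_r$ depends only on the direction of $r$, and $T$ is normal in $\Isom(\Delta)$.
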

\begin{proof}
  Fix any $f\in PI(\Delta)$ and $\gamma\in \Gamma$. First note that,
  for a given representative $\gamma = [P]$, the germ
  $[ P t_P^m f ]$ is independent of choice of $m$, since if $m<n$
  then $P  t_P^n f$ is a subset of $P t_P^m f$ with the same
  limit set. Now suppose irreducibles $P$ and $Q$ both represent
  $\gamma$. Then $L(P) = L(Q)$ by definition, so $P$ and $Q$ have the
  same canonical translation $t\in T$. Find $m,n$ such that $f$ is
  isometric on both $P t^m$ and $Q t^n$. By
  \cref{lem:translation-inside} there are $m', n'$ such that
  $P t^{m+m'} \subset Q t^n$ and $Q t^{n+n'} \subset P t^m$. We
  then have
  \[ \gamma f_\ast = [P t^m f ] = [P t^{m+m'} f ] = [ Q t^n f ].
  \]
\end{proof}

\subsection{A normal series in the piecewise isometry group}

The action of $PI(\Delta)$ on the set of germs of irreducible pieces is
used to define a normal series in $PI(\Delta)$, in \cref{prop:filtration} below.

\begin{definition} \label{def:ranksuppheight}
  For a piecewise isometry $f\in PI(\Delta)$,
  \begin{enumerate}
  \item The {\em support} of $f$ is the union of all convex polyhedral
    sets $P$ such that $f$ restricts to a nontrivial isometry on the
    interior of $P$, denoted $\supp(f)$;
  \item the {\em rank} of $f$ is the greatest rank of a
    convex polyhedral set $P\subset \supp(f)$, denoted $\rk(f)$;
  \item the {\em height} of $f$ is the number of irreducible pieces of
    rank $\rk(f)$ in any finite decomposition of $\supp(f)$ into
    irreducibles, denoted $\height(f)$.
  \end{enumerate}
\end{definition}

Given any finite decomposition $P_1,\dotsc, P_n$ of $V$ into subsets
on which $f$ is isometric, each $P_i$ is either contained in
$\supp(f)$ or contained in the closure of $V - \supp(f)$, so the
support of $f$ is a polyhedral set. The rank and height of $f$ are by
definition the rank and height of its support.

For each $r=0,1,\dotsc, \dim(V)$ define
\[
  G_r = \set{ f\in PI(\Delta) \suchthat \rk(f) \leq r}
\]
and
\[
  C_r = \set{ f\in PI(\Delta) \suchthat \gamma f_\ast = \gamma \text{ for all } \gamma\in
    \Gamma^r}.
\]
It is straightforward to see that each $G_r$ and each $C_r$ is a normal subgroup of
$PI(\Delta)$.

\begin{lemma} \label{lem:Grform} $G_r$ is the group of all
  $f\in PI(\Delta)$ such that each irreducible convex polyhedral set
  $P$ of rank strictly greater than $r$ is commensurable with an
  irreducible convex polyhedral set $Q$ on which $f$ is trivial.
\end{lemma}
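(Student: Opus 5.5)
Both inclusions come down to comparing ranks. On one side is the rank of an irreducible $P$; on the other is the rank of $\supp(f)$ and of its closed complement $Z=\overline{V-\supp(f)}$. Recall from the discussion after \cref{def:ranksuppheight} that $\supp(f)$ and $Z$ are polyhedral sets, they are essentially disjoint, and their union is $V$. On $Z$ the map $f$ restricts to the identity on the interior of every piece of a suitable finite decomposition.

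For the inclusion of $G_r$ in the described set, let $f\in G_r$, so $\rk(\supp(f))\le r$, and let $P$ be irreducible with $\rk(P)>r$. Choose a finite decomposition $P_1,\dotsc,P_n$ of $V$ into convex polyhedral sets on which $f$ is isometric. Each $P_i$ lies either in $\supp(f)$ or in $Z$. By \cref{prop:reduction}, refine the essential intersections $P\doublecap P_i$ into a finite decomposition of $P$ by irreducibles $Q_1,\dotsc,Q_m$. Irreducibility of $P$ gives exactly one $Q_j$ with $\rk(Q_j)=\rk(P)$. This $Q_j$ cannot lie in $\supp(f)$, since that set has rank $\le r<\rk(P)$. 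Hence $Q_j$ lies in some $P_i\subset Z$, so $f$ is trivial on $Q_j$. Finally, $Q_j\subset P$ and $\rk(P\doublecap Q_j)=\rk(Q_j)=\rk(P)$, so \cref{lem:commens-characterizations} shows $P\sim Q_j$.

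For the reverse inclusion, suppose $f$ has the stated property but $\rk(f)>r$. Then $\supp(f)$ contains a convex polyhedral set of rank $>r$. Take a finite decomposition of $\supp(f)$ into pieces on which $f$ is a nontrivial isometry, and refine it into irreducibles using \cref{prop:reduction} and \cref{lem:piecebasics}. Some irreducible $P\subset\supp(f)$ of rank $k>r$ then occurs, with $f$ agreeing on $P$ with a single nontrivial isometry $g\in\Isom(\Delta)$. By hypothesis there is an irreducible $Q\sim P$ on which $f$ is trivial. By \cref{lem:commens-characterizations}, $P\doublecap Q$ has nonempty interior, and on that interior $f$ equals both $g$ and the identity. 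An isometry that is the identity on an open set is the identity, so $g=1$. This contradicts the nontriviality of $f$ on $P$, and therefore $\rk(f)\le r$.

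The step needing the most care is the claim in the second paragraph that the distinguished top-rank piece $Q_j$ lies on one side of the support dichotomy. This holds because the $Q_j$ refine the decomposition $P\doublecap P_i$, so each $Q_j$ sits inside a single $P_i$. One should also note that ``trivial on $Q$'' means $f$ is the identity on the interior of $Q$, which is exactly what the pieces in $Z$ provide. The rest is direct bookkeeping with the rank comparison $\rk(Q)\le\rk(X)$ for $Q\subset X$.
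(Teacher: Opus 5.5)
Your proof is correct. It differs from the paper's mainly in that it never uses canonical translations.

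The paper gets the lemma from \cref{lem:translation-inside} and \cref{lem:pi-alcove-characterization}. For $f\in G_r$ and an irreducible $P$ with $\rk(P)>r$, its witness is the translate $Q=Pt_P^m$ on which $f$ is isometric. That isometry must be the identity, since otherwise $Pt_P^m\subset\supp(f)$ would force $\rk(f)\ge\rk(P)>r$. For the converse, \cref{lem:translation-inside} shows that any commensurable $Q$ on which $f$ is trivial contains a tail $Pt_P^N$. So $f$ cannot be a nontrivial isometry on any irreducible of rank $>r$ inside $\supp(f)$.

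You take a different witness: the unique top-rank irreducible $Q_j$ in a refinement of $P$ by the isometry pieces. This is exactly the first half of the proof of \cref{lem:pi-alcove-characterization}, stopped before the translation step. You then certify $P\sim Q_j$ with the rank criterion of \cref{lem:commens-characterizations}. For the converse you only use that $P\doublecap Q$ has nonempty interior, on which a nontrivial isometry would have to agree with the identity.

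Your route is more elementary and self-contained. The paper's route gives a witness that is a translate of $P$ itself, of the same form $Pt_P^m$ used to define the action $f_\ast$ on germs.

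One step you leave implicit is that some irreducible piece of $\supp(f)$ has rank equal to $\rk(f)$. This rests on the fact that a convex polyhedral set is not a finite union of convex polyhedral sets of strictly smaller rank. The paper also uses this tacitly, for example in \cref{def:irred} and \cref{prop:height-well-def}.
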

\begin{proof}
  This follows from \cref{lem:translation-inside} and
 \cref{lem:pi-alcove-characterization}. 
\end{proof}

Clearly $G_{r-1}\leq C_r$: if $f\in G_{r-1}$ then, for any
$\gamma \in \Gamma^r$, by \cref{lem:Grform} there is an irreducible $Q$
representing $\gamma$ such that $f$ is trivial on $Q$, so
$\gamma f_\ast = \gamma$.
  
\begin{lemma} \label{lem:firstfiltration}
  For each $0 < r \leq \dim(V)$ we have $C_r \leq G_r$.
\end{lemma}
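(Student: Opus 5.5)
By \cref{lem:Grform}, it suffices to take $f\in C_r$ and an irreducible convex polyhedral set $P$ of rank $k>r$, and to find an irreducible $Q\sim P$ on which $f$ is trivial. By \cref{lem:pi-alcove-characterization} there is $m$ such that $f$ agrees on $P t_P^m$ with a single isometry $g\in\Isom(\Delta)$. Since $P t_P^m\sim P$, the goal becomes: show $g=1$, using only that $f$ fixes every germ of rank $r$.

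\textbf{Step 1: rank-$r$ test pieces inside $P t_P^m$.} By \cref{lem:irred-form}, $L(P)$ is a closed cell of $L(\Delta)$ of dimension $k-1\geq r$. For each face $\sigma'$ of this cell of dimension $r-1$ (nonempty because $r>0$), and each tile $\tau\subset P t_P^m$, I will show $A[\tau,\sigma']\subset P t_P^m$. The set $P t_P^m$ is cut out by half-spaces $M\supset\tau$ with $L(M)\supset L(P)\supset\sigma'$, and each of these half-spaces appears in the intersection defining $A[\tau,\sigma']$. By \cref{lem:alcove-basics}, $A[\tau,\sigma']$ is irreducible of rank $r$, and $f$ acts on it by $g$. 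Since $f\in C_r$, we get $[A[\tau,\sigma']\,g]=[A[\tau,\sigma']]$. By \cref{lem:commens-characterizations} this means:
\begin{itemize}
\item the linear part of $g$ maps the closed cell $\overline{\sigma'}$ to itself, and
\item $A[\tau,\sigma']g$ meets $A[\tau,\sigma']$ essentially.
\end{itemize}

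\textbf{Step 2: linear part.} The $(r-1)$-faces of $L(P)$ are permuted trivially. So the linear part of $g$ preserves each of them, hence fixes every vertex of $L(P)$: each vertex is an intersection of such faces when $r\geq 2$, and is itself such a face when $r=1$. Therefore the linear part of $g$ is the identity on the span $W$ of $\cone(P)$.

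For the directions orthogonal to $W$, I use the bounded directions. For each $\xi$ with $P$ both $\xi$- and $\iota(\xi)$-bounded, the set $A[\tau,\sigma']$ lies in the thin $\xi$-layer of $\tau$. Commensurability forces $g$ to preserve that layer, since an essential intersection must lie in a common minimal layer by thinness. Applying this to tiles $\tau$ adjacent to one another inside $P t_P^m$ shows that $g$ preserves each hyperplane of $\Omega$ meeting $P t_P^m$ in a both-bounded direction. This pins down the linear part on $W^\perp$ as well, up to the finite ambiguity of reflections in such hyperplanes; that ambiguity is excluded because $g$ must also preserve the layer on each side.

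\textbf{Step 3: translation part (expected main obstacle).} At this point $g$ is expected to be a translation by a vector $v$. Commensurability of $A[\tau,\sigma']g$ with $A[\tau,\sigma']$ is then expected to force $v\in\operatorname{span}(\sigma')$ modulo a bounded error, since $A[\tau,\sigma']$ is thin in every direction transverse to $\sigma'$ in which it is doubly bounded. I would try first to make this error vanish, using the discreteness of $\Omega$ and the fact that $A[\tau,\sigma']$ lies in a single thin layer for each hyperplane class containing the directions of $\sigma'$. This would give exactly $v\in\operatorname{span}(\sigma')$ for every $(r-1)$-face $\sigma'$.

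Since $L(P)$ is a pointed cell of dimension $k-1>r-1$, the intersection of the spans of all its $(r-1)$-faces is $\{0\}$: two distinct vertices already give this when $r=1$, and one can induct on $r$ using intersections of faces. Hence $v=0$, so $g=1$, and $f$ is trivial on $Q=P t_P^m\sim P$. The delicate points are the exact (not merely bounded) control of $v$ from germ-equality in Step 3, and the treatment of the orthogonal directions in Step 2. If the direct layer argument fails there, I would fall back on choosing the tiles $\tau$ to range over all of $P t_P^m$ and using that $f$ fixes the germ for every such $\tau$ simultaneously.
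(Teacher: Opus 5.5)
\textbf{Verdict: there is a genuine gap in your Step 2, and it cannot be closed, because the statement as written is false in dimension $3$.}

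\textbf{Where Step 2 fails.} The problem is your treatment of the directions orthogonal to $W=\operatorname{span}\cone(P)$. Commensurability of $A[\tau,\sigma']g$ with $A[\tau,\sigma']$ only tells you that $g$ carries the minimal layer containing $A[\tau,\sigma']$ to a minimal layer containing $A[\tau,\sigma']$. It does not prevent $g$ from exchanging the two walls of that layer. The ``layers on each side'' you want to use to exclude this are not contained in $Pt_P^m$, so $f$ need not act on them by $g$. Letting $\tau$ range over all tiles of $Pt_P^m$ does not help either.

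\textbf{Counterexample.} Take $V=\R^3$ with $\Omega$ the planes $x,y,z\in\Z$, and $P=[0,1]\times[0,\infty)^2$, which is irreducible of rank $2$. Let $f$ be the reflection $(x,y,z)\mapsto(1-x,y,z)$ on $P$ and the identity on $\overline{V-P}$.
\begin{itemize}
\item This $f$ lies in $PI(\Delta)$ and $\supp(f)\supseteq P$, so $\rk(f)=2$ and $f\notin G_1$.
\item Every rank-$1$ germ is fixed. A beam whose tail lies in $P$ has the form $[0,1]\times[b,\infty)\times[c,c+1]$ or $[0,1]\times[b,b+1]\times[c,\infty)$, and $f$ maps it onto itself. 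Every other beam has a tail essentially disjoint from $P$, where $f$ is the identity.
\end{itemize}
Hence $f\in C_1\setminus G_1$. Here $df$ is the identity on $W=\{0\}\times\R^2$ and $-1$ on $W^\perp$, which is exactly the case your argument leaves open.

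The paper's proof breaks at the same spot. In its second case ($\tau f=\tau$) it asserts a corank-$1$ set $Q\subset P$ with $v\in\cone(Q)$ and $\pm df(v)\notin\cone(Q)$. No such $Q$ exists when $df$ fixes $\operatorname{span}\cone(P)$ pointwise.

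\textbf{What your argument does prove.} Step 1 and the $W$-part of Step 2 are correct. Step 3 is not actually an obstacle. Suppose $g$ is translation by $v$. For every family $\xi$ with $\overline{\sigma'}\subset\partial L(\xi)$, the set $A[\tau,\sigma']$ lies in the minimal $\xi$-layer $\lambda$ through $\tau$. The condition $A[\tau,\sigma']\doublecap(A[\tau,\sigma']+v)\neq\emptyset$ then forces $\lambda+v=\lambda$, since minimal layers of one family are equal or essentially disjoint. Because $\operatorname{span}(\sigma')$ is the intersection of the linear hyperplanes of the arrangement that contain it, this gives $v\in\operatorname{span}(\sigma')$ exactly, with no bounded error. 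Your face-intersection argument then gives $v=0$.

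So your approach shows $g=1$ whenever $dg=1$. This proves $C_r^+\le G_r$. It also proves $C_r\le G_r$ for those $P$ with $\rk(P)=\dim V$, for instance whenever $\dim V=2$.

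\textbf{Consequence for the main theorem.} This suffices to rescue the normal series: replace $C_r$ by $C_r\cap G_r$, giving $G_{r-1}\le C_r^+\le C_r\cap G_r\le G_r$. The successive quotients still embed in finitary permutations of $\Gamma^r$, in $\bigoplus_{\gamma\in\Gamma^r}\matrixO(V,\Xi)$, and in $\bigoplus_{\gamma\in\Gamma^r}T$ respectively, so elementary amenability survives.
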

\begin{proof}
  Suppose $f\in PI(\Delta) - G_r$. This means that there is an
  irreducible piece $P$ of rank $r+1$ such that $f$ is a nontrivial
  isometry on $P$. We may take $P$ to be an alcove
  $P = A[\tau, \sigma]$. Consider the image of the base tile
  $\tau$. There are two cases. First, if $\tau f \neq \tau$, then
  there is a hyperplane parallel to a face of $P$ separating the two
  tiles. Cutting along a hyperplane in this family produces a corank-1
  sub-convex set of $P$ which is moved by $f$, so $f\notin
  C_r$. Second, if $\tau f = \tau$ then $f$ has nontrivial
  differential $df$, which we consider as a linear map $V\to V$. Use
  this to find a corank-1 sub-convex set $Q\subset P$ whose recession
  cone contains a nonzero vector $v$ and does not contain $\pm
  df(v)$. Then $[Q]f_\ast \neq [Q]$, so $f\notin C_r$.
\end{proof}

We now further refine the normal series
$1\leq C_0 \leq G_0 \leq C_1\leq G_1\leq  \dotsb \leq PI(\Delta)$. For each germ
$\gamma \in \Gamma$ define the subgroup
\[C(\gamma) = \set{ f\in PI(\Delta) \suchthat \gamma f_\ast = \gamma}
  \leq PI(\Delta),
\]
so that $C_r$ is the intersection $C(\gamma)$ over all
$\gamma \in \Gamma^r$.

For each $\xi \in \Xi$, let $v_\xi$ be the positively-oriented unit
vector normal to the boundary of $M$. Let $\matrixO(V,\Xi)$ be the
finite subgroup of $\matrixO(V)$ preserving the set of vectors
$\{v_\xi\}_{\xi\in \Xi}$. If $f$ is an isometry of $V$ preserving
$\Delta$, then its differential $d_xf$ lies in $\matrixO(V,\Xi)$ for
any point $x\in V$, and is independent of $x$. It follows that for each
$\gamma\in \Gamma$, there is a well-defined homomorphism
\[
  \bar\varphi_\gamma : C(\gamma) \to \matrixO(V,\Xi)
\]
determined by choosing any irreducible convex polyhedral set $P$
representing $\gamma$ on which $f$ is isometric, and setting
$\bar\varphi_\gamma(f) = d \restr{f}{P}$.

For each $\gamma \in \Gamma$ define $C^+(\gamma) =
\ker(\bar\varphi_\gamma)$, so that $C^+(\gamma) \leq C(\gamma)$ is a
subgroup of finite index. Further define $C^+_r \leq C_r$ to be the
intersection of $C^+(\gamma)$ over all $\gamma \in \Gamma^r$. 
If $f\in G_{r-1}$ and $\gamma \in \Gamma^r$ then by
\cref{lem:Grform} there is an irreducible $Q$ representing
$\gamma$ such that $f$ is trivial on $Q$, so $G_{r-1} \leq C^+_r$. It
is straightforward to see that each $C^+_r$ is normal in
$PI(\Xi)$. We summarize this discussion:

\begin{prop} \label{prop:filtration} For each $0\leq r \leq \dim(V)$,
  the subgroups $G_r, C_r, C^+_r$ defined above are normal in
  $PI(\Delta)$, and for each $r>0$ there are containments
  \[
    G_{r-1} \leq C^+_r \leq C_r \leq G_r.
  \]
  \qed
\end{prop}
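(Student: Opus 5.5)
The statement gathers the preceding discussion, so the plan is to check three things in turn: normality of each subgroup, and then the three containments.

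For normality, I would first record the equivariance of the germ action. For $f,g\in PI(\Delta)$ we have $(\gamma f_\ast)g_\ast=\gamma (fg)_\ast$. To see this, pick a representative $P$ of $\gamma$ and, by \cref{lem:pi-alcove-characterization}, an exponent $m$ such that $f$ is isometric on $Pt_P^m$. Then pick $n$ such that $g$ is isometric on $(Pt_P^mf)t^n$, where $t$ is the canonical translation of $Pt_P^mf$. By \cref{lem:translation-inside}, some deeper translate $Pt_P^{m'}f$ lies inside $(Pt_P^mf)t^n$. This makes $fg$ isometric on $Pt_P^{m'}$, and the composite germ is the one obtained in two steps.

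Since the action preserves rank, conjugation permutes $\Gamma^r$. It follows that $C(\gamma)^g=C(\gamma g_\ast)$, so $C_r=\bigcap_{\gamma\in\Gamma^r}C(\gamma)$ is normal.

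For $C^+_r$, I would check that $\bar\varphi_{\gamma g_\ast}(g^{-1}fg)$ is conjugate in $\matrixO(V,\Xi)$ to $\bar\varphi_\gamma(f)$, namely by $d\restr{g}{Q}$ for a representative $Q$ of $\gamma$ on which $g$ is isometric. This holds because differentials of compositions of isometries multiply. Kernels are therefore permuted by conjugation, and the intersection $C^+_r$ is normal.

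For $G_r$, I would note that $\supp(g^{-1}fg)=\supp(f)g$ up to lower-dimensional sets, and a piecewise isometry preserves rank of polyhedral sets by \cref{lem:piecebasics}. So $\rk(g^{-1}fg)=\rk(f)$. Closure of $G_r$ under products follows from $\supp(fh)\subset\supp(f)\cup\supp(h)$.

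For the containments, $G_{r-1}\le C^+_r$ follows from \cref{lem:Grform}, as already argued: each rank-$r$ germ has a representative on which $f$ is trivial, so $f$ fixes the germ and has trivial differential there. The inclusion $C^+_r\le C_r$ holds by definition, and $C_r\le G_r$ is \cref{lem:firstfiltration}.

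The main obstacle is the equivariance and compatibility of the germ action with composition. This step needs care because the representative must be pushed deep enough for both maps at once. The tool for that is \cref{lem:translation-inside}, applied to the image irreducible $Pt_P^mf$, which is irreducible because $f$ is an honest isometry of $\Delta$ there.
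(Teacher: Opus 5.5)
Your normality arguments and the containments $G_{r-1}\le C^+_r\le C_r$ are correct and follow the paper's discussion before the statement. You also fill in what the paper calls straightforward: the compatibility $(\gamma f_\ast)g_\ast=\gamma(fg)_\ast$, the identity $C(\gamma)^g=C(\gamma g_\ast)$, and the conjugacy of $\bar\varphi_{\gamma g_\ast}(g^{-1}fg)$ with $\bar\varphi_\gamma(f)$.

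The problem is the last containment. Like the paper, you take $C_r\le G_r$ from \cref{lem:firstfiltration}. That lemma fails in general, so this step cannot be completed. Here is a counterexample.

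\begin{itemize}
\item Take the unit-cube tessellation of $\R^3$, with hyperplanes $x=k$, $y=k$, $z=k$ for $k\in\Z$.
\item Let $P=\{0\le x\le 1,\ y\ge 0,\ z\ge 0\}$, a set of rank $2$, and let $\phi(x,y,z)=(1-x,y,z)$, which lies in $\Isom(\Delta)$.
\item Since $\phi(P)=P$, the map $f$ equal to $\phi$ on $P$ and to the identity off $P$ lies in $PI(\Delta)$. Its support is $\supp(f)=P$, so $\rk(f)=2$ and $f\notin G_1$.
\item Every rank-$1$ germ is represented by a beam such as $[a,a+1]\times[b,b+1]\times[c,\infty)$, or its analogue in another coordinate direction. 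If the deep part of the beam lies in $P$, its $x$-range is $[0,1]$ and $\phi$ maps it onto itself. Otherwise $f$ is the identity on it.
\end{itemize}

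So $f$ fixes every rank-$1$ germ, i.e.\ $f\in C_1\setminus G_1$. The paper's proof of the lemma breaks in its second case. There $d\phi$ can fix $\cone(P)$ pointwise and act nontrivially only in the directions in which $P$ is thin. Then no corank-$1$ subset $Q$ with $\pm d\phi(v)\notin\cone(Q)$ exists.

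What you can prove for $r>0$ is $C^+_r\le G_r$. Suppose $f\in C^+_r$ agrees with a nontrivial isometry on a rank-$(r+1)$ alcove $P$.
\begin{itemize}
\item Its differential equals $\bar\varphi_\gamma(f)=1$ for a rank-$r$ germ $\gamma$ inside $P$, so it is a translation by some $v\neq 0$.
\item The vector $v$ must preserve the minimal layer containing $P$ in each thin direction; otherwise every rank-$r$ germ inside $P$ would be moved. Hence $v$ lies in the span $W$ of $\cone(P)$.
\item Since $\cone(P)$ is pointed and full-dimensional in $W$, its facet normals span $W$.
\item So $v$ moves the slab of $P$ between some facet hyperplane and the next parallel hyperplane of $\Omega$ to an essentially disjoint slab. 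This moves a rank-$r$ germ, a contradiction.
\end{itemize}

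Replacing $C_r$ by $C_r\cap G_r$ then gives a valid chain of normal subgroups $G_{r-1}\le C^+_r\le C_r\cap G_r\le G_r$. Its quotients embed as in \cref{lem:gr-mod-cr}, \cref{lem:cr-mod-crplus} and \cref{lem:crplus-mod-gr}, so elementary amenability survives. The proposition as stated, however, is false.
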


\subsection{Stabilizer quotients} \label{sec:sequences}

We now study the successive quotients of the normal series given in
\cref{prop:filtration}.

\begin{lemma} \label{lem:gr-mod-cr}
  There is an exact sequence
  \[
    1 \to C_r \to G_r \to \Perm_f(\Gamma^r).
  \]
\end{lemma}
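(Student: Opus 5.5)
The map $G_r \to \Perm_f(\Gamma^r)$ is the one induced by the action $\gamma \mapsto \gamma f_\ast$ on rank-$r$ germs. Here $\Perm_f(\Gamma^r)$ is the group of finitary permutations of $\Gamma^r$, i.e.\ those moving only finitely many germs. The plan has three steps: show the action restricts to a homomorphism into $\Perm(\Gamma^r)$, show its kernel is $C_r$, and show that every $f\in G_r$ moves only finitely many rank-$r$ germs.

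The first two steps are formal. The action of $PI(\Delta)$ on $\Gamma$ is well defined and preserves rank, so restricting it to the invariant subset $\Gamma^r$ gives a homomorphism $PI(\Delta)\to \Perm(\Gamma^r)$; we use right actions consistently. Its kernel is, by definition, the set of $f$ fixing every $\gamma\in\Gamma^r$, which is $C_r$. Since $C_r\leq G_r$ by \cref{prop:filtration}, restricting to $G_r$ gives exactness at $C_r$ and at $G_r$.

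The main step is finitariness. Fix $f\in G_r$ and a finite decomposition $P_1,\dotsc,P_n$ of $V$ into irreducibles on which $f$ is isometric; this exists by \cref{prop:reduction} after refining a decomposition on which $f$ is isometric. Let $\gamma\in\Gamma^r$ with representative $C$ of rank $r$. The sets $C\doublecap P_i$ decompose $C$, so by irreducibility exactly one index $i$ has $\rk(C\doublecap P_i)=r$. Then $C\doublecap P_i$ has a rank-$r$ irreducible piece $C'$ commensurable with $C$ and contained in $P_i$, by \cref{prop:reduction} and \cref{lem:commens-characterizations}. We then split into two cases.

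\begin{itemize}
\item \emph{$f$ is the identity on $P_i$.} Then $\gamma f_\ast=[C'f]=[C']=\gamma$.
\item \emph{$f$ is a nontrivial isometry on $P_i$.} Then $P_i\subset\supp(f)$, and since $\rk(f)\le r$ we get $\rk(P_i)\le r$. As $C'\subset P_i$ has rank $r$, $\rk(P_i)=r$, so $C'\subset P_i$ is a rank-$r$ irreducible inside the irreducible $P_i$. Applying irreducibility of $P_i$ to a decomposition refining $C'$ together with its complement, \cref{lem:commens-characterizations} gives $C'\sim P_i$. Hence $\gamma=[P_i]$.
\end{itemize}

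Therefore every germ of $\Gamma^r$ moved by $f$ lies in the finite set $\{[P_i] : \rk(P_i)=r,\ P_i\subset\supp(f)\}$, which has at most $\height(f)$ elements when $\rk(f)=r$ and is empty when $\rk(f)<r$. So the image of $f$ is finitary.

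The main obstacle is the commensurability claim in the second case: that a rank-$r$ irreducible subset of a rank-$r$ irreducible set is commensurable with it. Equal rank gives equal limit sets by the argument in \cref{lem:commens-characterizations}, since $L(C')$ is a subcomplex of the single cell $L(P_i)$ of the same dimension. A nonempty essential intersection is automatic, because $C'\subset P_i$ has nonempty interior.
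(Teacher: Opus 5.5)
Your proposal is correct and follows the paper's proof: the germ action restricted to $\Gamma^r$ is a homomorphism with kernel $C_r$, and the containment $C_r\le G_r$ gives exactness at $G_r$ (note that \cref{lem:firstfiltration} and \cref{prop:filtration} only provide this containment for $r>0$, so your argument, like the paper's, tacitly assumes $r>0$). The paper simply asserts that the image lies in $\Perm_f(\Gamma^r)$, and your argument correctly supplies that omitted step: every rank-$r$ germ moved by $f$ is $[P_i]$ for one of the finitely many rank-$r$ irreducible pieces $P_i$ on which $f$ is a nontrivial isometry.
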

\begin{proof}
For each $r$, the action of piecewise isometries on germs determines a
homomorphism $G_r \to \Perm_f(\Gamma^r)$ whose kernel is $C_r\cap
G_r$, which is equal to $C_r$ by \cref{lem:firstfiltration}. 
\end{proof}

\begin{lemma} \label{lem:cr-mod-crplus}
  There is an exact sequence
  \[
    1 \to C^+_r \to C_r \to \bigoplus_{\gamma \in \Gamma^r}
    \matrixO(V,\Xi).
  \]
\end{lemma}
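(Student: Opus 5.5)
The plan is to assemble the homomorphisms $\bar\varphi_\gamma$, for $\gamma\in\Gamma^r$, into one map
\[
  \Phi_r : C_r \to \prod_{\gamma\in\Gamma^r}\matrixO(V,\Xi), \qquad \Phi_r(f) = \bigl(\bar\varphi_\gamma(f)\bigr)_{\gamma\in\Gamma^r}.
\]
Then I will show three things: $\Phi_r$ is a homomorphism, its kernel is $C_r^+$, and its image lies in the direct sum. Since $C_r\le C(\gamma)$ for every $\gamma\in\Gamma^r$, each coordinate is defined on $C_r$. Each coordinate is a homomorphism; for completeness I will check this directly. For $f,g\in C(\gamma)$, choose a representative $P$ of $\gamma$ on which $f$ is isometric, shrinking it via \cref{lem:pi-alcove-characterization} and \cref{lem:translation-inside}. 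Then $Pf$ again represents $\gamma$, so after passing to $Pt_P^m$ we may also assume $g$ is isometric on $Pf$. The chain rule then gives $d(fg)|_P = df|_P\, dg|_{Pf}$, which is compatible with right actions. Hence $\Phi_r$ is a homomorphism into the full product. Its kernel is $\bigcap_{\gamma\in\Gamma^r}\ker\bar\varphi_\gamma = C_r^+$ by definition, which gives exactness at $C_r$.

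The substantive step is finite support: for each $f\in C_r$, only finitely many $\gamma\in\Gamma^r$ have $\bar\varphi_\gamma(f)\neq 1$. First I fix a finite decomposition $P_1,\dots,P_n$ of $V$ on which $f$ is isometric. I refine it into irreducibles using \cref{prop:reduction}. Let $\gamma=[Q]\in\Gamma^r$ be arbitrary. The sets $Q\doublecap P_i$ form a finite decomposition of $Q$, so by irreducibility exactly one of them, say $Q\doublecap P_j$, has rank $r$. By \cref{lem:commens-characterizations}, $Q\doublecap P_j$ contains a representative of $\gamma$ on which $f$ is isometric with differential $d f|_{P_j}$. So $\bar\varphi_\gamma(f) = df|_{P_j}$.

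If $df|_{P_j}$ is the identity, the coordinate is trivial. It therefore suffices to show that each $P_j$ with nontrivial differential contains only finitely many germs of rank $r$. Since $f\in C_r\le G_r$ by \cref{lem:firstfiltration}, every piece of rank $>r$ is commensurable with one on which $f$ is trivial. Refining further, I can assume that every piece with nontrivial differential has rank at most $r$. A polyhedral set of rank $\le r$ determines only finitely many germs of rank $r$. I will justify this as follows. Any irreducible $Q\subset P_j$ of rank $r$ meets, in rank $r$, one of the finitely many rank-$r$ irreducible pieces of a fixed decomposition of $P_j$; this uses \cref{prop:height-well-def} and its proof. Irreducible sets of equal limit set and equal rank whose essential intersection has full rank are commensurable, so $[Q]$ is one of at most $\height(P_j)$ germs. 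Thus $\bar\varphi_\gamma(f)\ne 1$ for at most $\sum_j\height(P_j)$ germs $\gamma$, and the image lies in $\bigoplus_{\gamma\in\Gamma^r}\matrixO(V,\Xi)$.

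I expect this finiteness argument to be the main obstacle. The delicate points are, first, ensuring the rank-$>r$ parts of $f$ carry trivial differential; this is where $C_r\le G_r$ is used. Second, the count of rank-$r$ germs in a rank-$r$ polyhedral set must be made rigorous. For the latter I would use the injection $\gamma\mapsto$ (the unique top-rank piece meeting a representative in full rank), exactly as in \cref{prop:height-well-def}.
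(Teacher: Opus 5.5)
Your outline is the paper's own proof with the details filled in. You assemble the $\bar\varphi_\gamma$, read off the kernel $C_r^+$, and get the direct sum from $C_r\le G_r$ plus finiteness of rank-$r$ germs in a support of rank $\le r$; your homomorphism check and germ count are fine. The gap is exactly the step you flagged, $f\in C_r\Rightarrow f\in G_r$, and the paper's proof has the same gap. First, \cref{lem:firstfiltration} is only asserted for $r>0$, so at $r=0$ you have no justification. The conclusion is in fact false there. In the square tessellation of $\R^2$, let $f$ be the reflection $(x,y)\mapsto(x,1-y)$ on the half-strip $\{x\ge 0,\ 0\le y\le 1\}$ and the identity elsewhere. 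Then $f$ maps every tile onto itself, so $f\in C_0$. Yet $\bar\varphi_\tau(f)=\mathrm{diag}(1,-1)$ for each of the infinitely many tiles $\tau=[k,k+1]\times[0,1]$, $k\ge 0$.

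Second, the same phenomenon defeats \cref{lem:firstfiltration} for $r>0$, within the paper's hypotheses (the cubical case $(A_1)^n$). In the cube tessellation of $\R^3$, let $f$ be $(x,y,z)\mapsto(x,y,1-z)$ on $P=\{x\ge 0,\ y\ge 0,\ 0\le z\le 1\}$ and the identity elsewhere. Every rank-$1$ germ that $f$ does not fix trivially has a representative in $P$ of the form $[N,\infty)\times[b,b+1]\times[0,1]$ or $[b,b+1]\times[N,\infty)\times[0,1]$ with $b\ge 0$, and $f$ maps each of these onto itself. So $f\in C_1$, yet $\rk(f)=2$ and $\bar\varphi_\gamma(f)=\mathrm{diag}(1,1,-1)$ for infinitely many $\gamma\in\Gamma^1$. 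The proof of \cref{lem:firstfiltration} breaks exactly when $df\neq 1$ fixes the span of $\cone(P)$ pointwise: then no corank-one $Q$ with the required property exists. Thus $\prod_\gamma\bar\varphi_\gamma$ need not land in the direct sum, and no sharper counting can repair this.

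What your argument does prove, essentially verbatim, is exactness of $1\to C_r^+\cap G_r\to C_r\cap G_r\to\bigoplus_{\gamma\in\Gamma^r}\matrixO(V,\Xi)$. For $f\in G_r$, every piece on which $f$ is a nontrivial isometry lies in $\supp(f)$ and so already has rank $\le r$, which makes your \emph{refining further} step automatic. Running the filtration inside $G_r$ in this way is enough for the amenability application.
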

\begin{proof}
  The product of the homomorphisms $\bar \varphi_\gamma$ is the
  rightmost homomorphism. The codomain is a direct sum since any $f\in
  C_r$ is an element of $G_r$ by \cref{lem:firstfiltration}, and
  its support contains finitely many germs of rank $r$. The kernel is
  $C^+_r$ by definition.
\end{proof}

\begin{lemma} \label{lem:crplus-mod-gr}
  There is an exact sequence
  \[
    1 \to G_{r-1} \to C_r^+ \to \bigoplus_{\gamma \in \Gamma^r} T
  \]
\end{lemma}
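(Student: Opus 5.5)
We will construct, for each germ $\gamma\in\Gamma^r$, a homomorphism $\psi_\gamma : C^+_r \to T$ recording the translation part of $f$ on a representative of $\gamma$. Then we take the product of the $\psi_\gamma$ and identify its kernel with $G_{r-1}$.

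\textbf{Constructing $\psi_\gamma$.} Fix $f\in C^+_r$ and $\gamma\in\Gamma^r$. By \cref{lem:pi-alcove-characterization}, choose an irreducible $P$ with $[P]=\gamma$ on which $f$ is isometric. Since $\bar\varphi_\gamma(f)=1$, the isometry $\restr{f}{P}$ has trivial differential, so it is a translation $x\mapsto x+v$ by some element $t\in\Isom(\Delta)$ with trivial linear part. Hence $t\in T$, and we set $\psi_\gamma(f)=t$.

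\textbf{Well-definedness and the homomorphism property.} Any other representative $Q$ on which $f$ is isometric satisfies $P\doublecap Q\neq\emptyset$, and on this set of nonempty interior the two isometries agree. So $\psi_\gamma(f)$ is independent of the representative. For $f,g\in C^+_r$, shrink $P$ via \cref{lem:translation-inside} so that $f$ is isometric on $P$ and $g$ is isometric on $Pf$. This is possible because $[Pf]=\gamma f_\ast=\gamma$, so $Pf$ is a representative of $\gamma$ and we may translate deeper by $t_P$. The composite $fg$ on $P$ is then translation by $\psi_\gamma(f)\psi_\gamma(g)$, and since $T$ is abelian the order of composition causes no trouble. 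The product $\prod_\gamma\psi_\gamma$ lands in the direct sum because $f\in C^+_r\le G_r$ has support with only finitely many rank-$r$ germs, as in the proof of \cref{lem:cr-mod-crplus}. Off the support, $f$ is the identity, so $\psi_\gamma(f)=1$ there.

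\textbf{Kernel equals $G_{r-1}$.} We already know $G_{r-1}\le C^+_r$, and for $f\in G_{r-1}$, \cref{lem:Grform} gives a representative of each rank-$r$ germ on which $f$ is trivial, so $\psi_\gamma(f)=1$. This gives $G_{r-1}\le\ker$.

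The converse is the main step. Suppose $f\in C^+_r$ with $\psi_\gamma(f)=1$ for all $\gamma\in\Gamma^r$. Choose a finite decomposition of $V$ into pieces on which $f$ is isometric, and refine it into irreducibles using \cref{prop:reduction}. Any irreducible piece $Q$ of rank $\ge r$ on which $f$ is nontrivial gives a contradiction, as follows.
\begin{itemize}
\item If $\rk(Q)>r$, then $f\in G_r$ (by \cref{lem:firstfiltration}) is already contradicted.
\item If $\rk(Q)=r$, then $\psi_{[Q]}(f)$ is the isometry of $f$ on $Q$, which is nontrivial.
\end{itemize}
Hence $\supp(f)$ is covered by pieces of rank $<r$, so $\rk(f)\le r-1$ and $f\in G_{r-1}$.

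The subtle point is the case $r=\dim(V)$ and the rank bookkeeping. There we need that the support of $f$ has rank exactly the maximum rank of the pieces in a decomposition into irreducibles. This follows from the definition of rank of a polyhedral set together with \cref{prop:height-well-def}.
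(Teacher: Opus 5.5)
Your proposal is correct and follows the same route as the paper. You take the translation part of $f$ on a representative of each rank-$r$ germ; this is well defined because representatives have nonempty essential intersection, and it lands in the direct sum because $\supp(f)$ contains only finitely many rank-$r$ germs. You then identify the kernel with $G_{r-1}$, and your kernel argument (a decomposition into irreducibles plus $C_r\le G_r$) just unpacks the paper's appeal to \cref{lem:Grform}, which tacitly uses the same containment for germs of rank greater than $r$. Nothing special happens at $r=\dim(V)$.
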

\begin{proof}
  Given $f\in C^+_r$ and $\gamma\in \Gamma^r$, choose an irreducible
  convex polyhedral set $C$ representing $\gamma$ on which $f$ is
  isometric by \cref{lem:pi-alcove-characterization}. The restriction
  of $f$ to $C$ is a translation in $T \leq \Isom(\Delta)$ since it
  extends to an isometry of $V$ with trivial differential. The element
  of $T$ is independent of choice of representative $C$ since any two
  choices have intersection with nonempty interior by definition, and
  it is similarly easy to see that this defines a homomorphism. The
  kernel is $G_{r-1}$ by \cref{lem:Grform}.
\end{proof}

\begin{theorem}
  The group $PI(\Delta)$ is elementary amenable.
\end{theorem}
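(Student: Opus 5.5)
The plan is to climb the normal series of \cref{prop:filtration} and use that the class of elementary amenable groups contains all abelian and all finite groups and is closed under subgroups, quotients, extensions and directed unions. In particular it contains all locally finite groups. We show by induction on $r$ that $G_r$ is elementary amenable; since $G_{\dim(V)} = PI(\Delta)$ (every piecewise isometry has rank at most $\dim(V)$), this gives the theorem.

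First we pin down the bottom of the chain. Every germ of rank $0$ is represented by a single tile, and the canonical translation of a tile is trivial. Hence an element of $C_0^+$ fixes every tile with trivial differential, so it is the identity; thus $C_0^+=\{1\}$. We still need to verify that $C_0^+$, $C_0$, $G_0$ are defined compatibly with \cref{prop:filtration} at $r=0$; the inclusion $C_0 \le G_0$ there is trivial, since elements of rank $0$ are supported on finitely many tiles, and we set $G_{-1}=\{1\}$. With this convention \cref{lem:crplus-mod-gr} is meaningful at $r=0$.

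For the inductive step, assume $G_{r-1}$ is elementary amenable. We climb one level in three stages.
\begin{enumerate}
\item By \cref{lem:crplus-mod-gr}, $C_r^+/G_{r-1}$ embeds in $\bigoplus_{\gamma\in\Gamma^r} T$, which is abelian because $T$ is a free abelian translation group. So $C_r^+$ is an extension of $G_{r-1}$ by an abelian group, hence elementary amenable.
\item By \cref{lem:cr-mod-crplus}, $C_r/C_r^+$ embeds in $\bigoplus_{\gamma} \matrixO(V,\Xi)$. This is a direct sum of finite groups, hence locally finite, so $C_r$ is elementary amenable.
\item By \cref{lem:gr-mod-cr}, $G_r/C_r$ embeds in $\Perm_f(\Gamma^r)$, the group of finitely supported permutations, which is locally finite. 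So $G_r$ is elementary amenable.
\end{enumerate}

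The group-theoretic steps are formal. The main thing to check is that the target of \cref{lem:gr-mod-cr} really is the finitary permutation group, that is, that each $f\in G_r$ moves only finitely many germs of rank $r$. The plan for this is as follows. Since $f\in G_r$, \cref{lem:Grform} gives that $\supp(f)$ has rank at most $r$. A rank-$r$ germ $\gamma$ moved by $f$ must then be represented by an irreducible $P$ on which $f$ is nontrivial. Such a $P$ has, after translating by $t_P^m$, a rank-$r$ intersection with $\supp(f)$, so it is commensurable with one of the finitely many rank-$r$ irreducible pieces in a decomposition of $\supp(f)$. By \cref{prop:height-well-def} there are only $\height(f)$ such pieces, and this bounds the number of moved germs. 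The same finiteness argument also justifies the direct sums appearing in the other two lemmas.
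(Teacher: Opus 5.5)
Your strategy is the paper's: climb $G_{r-1}\le C_r^+\le C_r\le G_r$ using \cref{lem:crplus-mod-gr}, \cref{lem:cr-mod-crplus} and \cref{lem:gr-mod-cr}. The gap is the inclusion $C_r\le G_r$. Step (3) needs it for ``$G_r/C_r$'' to make sense. Step (2) needs it for the direct sum, because your finiteness argument starts from ``since $f\in G_r$'' and so says nothing about elements of $C_r$.

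Your justification at $r=0$ is a non sequitur: finite support of elements of $G_0$ does not show that an element fixing every tile has rank $0$. The inclusion is in fact false. In the square tiling of $\R^2$, let $f$ be the reflection $(x,y)\mapsto(1-x,y)$ on the half-strip $S=[0,1]\times[0,\infty)$ and the identity off $S$. Every tile is mapped to itself, so $f\in C_0$. But $\supp(f)=S$ has rank $1$, so $f\notin G_0$. Moreover $\bar\varphi_{[\tau]}(f)\neq 1$ for each of the infinitely many tiles $\tau\subset S$.

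The same construction in the cubical tiling of $\R^3$ (reflect $x\mapsto 1-x$ on $[0,1]\times[0,\infty)^2$, identity elsewhere) fixes every germ but has rank $2$. So it lies in $C_1\setminus G_1$, contradicting \cref{lem:firstfiltration}. The second case of that proof needs a corank-one $Q\subset P$ and a nonzero $v\in\cone(Q)$ with $\pm df(v)\notin\cone(Q)$, and no such $Q$ exists when $df$ fixes $\cone(P)$ pointwise. So this hole is inherited from the paper, but it is real for tessellations of type $(A_1)^n$.

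The repair is to intersect with $G_r$. The subgroups $C_r\cap G_r$ and $C_r^+\cap G_r$ are normal, and $G_{r-1}\le C_r^+\cap G_r\le C_r\cap G_r\le G_r$, using $G_{r-1}\le C_r^+$ as in the paper. The kernel of the action map $G_r\to\Perm(\Gamma^r)$ is exactly $C_r\cap G_r$. Every element now in play lies in $G_r$, so your height-counting argument legitimately gives three embeddings:
\begin{itemize}
\item $G_r/(C_r\cap G_r)$ into $\Perm_f(\Gamma^r)$;
\item $(C_r\cap G_r)/(C_r^+\cap G_r)$ into $\bigoplus_{\gamma}\matrixO(V,\Xi)$;
\item $(C_r^+\cap G_r)/G_{r-1}$ into $\bigoplus_\gamma T$.
\end{itemize}
The kernel of the last map is $G_{r-1}$ by \cref{lem:Grform}, since an element of $G_r$ that is trivial near every rank-$r$ germ is trivial near every germ of rank $\ge r$. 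With these replacements your induction runs verbatim, starting from $C_0^+\cap G_0=C_0^+=\{1\}$, which you verified correctly.
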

\begin{proof}
  The quotients $G_r/ C_r$ and $C_r / C_r^+$ are locally finite by
  \cref{lem:gr-mod-cr} and \cref{lem:cr-mod-crplus}, therefore
  elementary amenable. The quotients $C_r^+ / G_{r-1}$ are abelian by
  \cref{lem:crplus-mod-gr}, and therefore elementary amenable. It
  follows then from \cref{prop:filtration} that $PI(\Delta)$, which is
  equal to $G_{\dim(V)}$, is an iterated extension of elementary
  amenable groups by elementary amenable groups, and is therefore
  elementary amenable.
\end{proof}

Future work will identify the images of the rightmost maps in the
above exact sequences, and use this to prove finite generation and
other finiteness properties of $PI(\Delta)$.

\section{Examples} \label{sec:examples}

\subsection{Tessellations associated to affine Weyl groups}

Suppose $\Phi$ is a crystallographic root system in $\R^n$ with inner product $(u,v) = u\cdot v$. 
For our notation and conventions we follow Humphreys \cite{Hum}.  For each
root $\alpha\in \Phi$ and integer $k\in \Z$, consider the hyperplane
$H_{\alpha, k} = \set{v\in \R^n \suchthat (v,\alpha) = k}$. The set
$\Omega$ of all such hyperplanes determines a cocompact tessellation
$\Delta$ of $V$ satisfying our standing assumption.

Suppose the simple roots are $\alpha_1,\dotsc, \alpha_n \in
\Phi$. This determines a set of positive roots $\Phi^+ \subset
\Phi$. The affine Weyl group $\Waff[]$, which is generated by reflections
over the hyperplanes in $\Omega$, acts simply transitively on tiles of
$\Delta$, so that every convex polyhedral set contains a tile in the
orbit of the fundamental tile
\[
  \tau_0 := \set{v\in \R^n \suchthat 0< v\cdot \alpha < 1\text{ for
      all } \alpha\in \Phi^+}.
\]

The set of germs $\Gamma$ admits a nice description in this case,
which we give in \cref{cor:germclassification} and
\cref{cor:germclassification2}. For each subset
$I\subset \set{\alpha_1,\dotsc, \alpha_n}$, let $V_I \subset \R^n$ be
the linear span of $I$ and set $\Phi_I = \Phi \cap V_I$, so that
$\Phi_I$ is a crystallographic root system in $V_I$. For each such
$I$, define the {\em standard convex polyhedral set}
\[
  P_I = \set{ v\in \R^n \suchthat v\cdot \alpha_i \geq 0 \text{ for }
    i=1,\dotsc, n \text{ and } v\cdot \alpha \leq 1 \text{ for }\alpha\in
    \Phi_I^+ }.
\]
Then each set $P_I$ is irreducible of rank $n-\abs{I}$, and the limit
set $L(P_I)$ is a closed simplex in the geometric realization of the
Coxeter complex associated to $\Phi$. For example,
$P_{\set{\alpha_1,\dotsc, \alpha_n}}$ is equal to the fundamental tile
$\tau_0$, and $P_\emptyset$ is equal to the fundamental alcove for the
action of the finite Weyl group $W$. The sets $P_I$ are precisely the
alcoves $A[\tau_0, \sigma]$ where $\sigma$ is a cell of the Coxeter
complex which is a face of the simplex corresponding to the
fundamental alcove for the finite Weyl group.

Let $W$ be the finite Weyl group associated to $\Phi$, and $W_I$ and
$\Waff[I]$ the finite and affine Weyl groups of the root system
$\Phi_I$ in $V_i$. Each group acts on the set of irreducible convex
polyhedral sets and induces an action on the spherical complex
$L(\Delta)$. Each closed cell of the Coxeter complex $L(\Delta)$ is of
the form $L(P_I) w$ for some $w\in W$ and
$I\subset \set{\alpha_1,\dotsc, \alpha_n}$.

\begin{prop} \label{prop:irreds-with-same-limit}
  Suppose irreducible convex polyhedral sets $C$ and $D$ satisfy $L(C)
  = L(D) = L(P_I) w$. Then there is a unique element $\rho \in
  \Waff[I]$ such that $Cw^{-1}\rho w$ is commensurable with $D$.
\end{prop}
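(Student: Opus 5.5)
The plan is to reduce to $w=1$, and then to show that a germ with limit set $L(P_I)$ is determined exactly by an alcove of the $\Phi_I$-tessellation of $V_I$. On these alcoves $\Waff[I]$ acts simply transitively, which gives both existence and uniqueness of $\rho$.

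\emph{Reduction.} The finite Weyl group $W$ acts linearly and preserves $\Omega$. Replacing $C,D$ by $Cw^{-1},Dw^{-1}$ therefore preserves irreducibility and commensurability, and moves the limit sets to $L(P_I)$. The statement for $w$ is then equivalent to finding a unique $\rho\in\Waff[I]$ with $C\rho\sim D$, using $Cw^{-1}\rho w\sim D \iff (Cw^{-1})\rho\sim Dw^{-1}$. So assume $w=1$.

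\emph{Description of irreducibles with limit set $L(P_I)$.} From the defining inequalities, $\cone(P_I)$ is cut out by $v\cdot\alpha_i\ge 0$ for all $i$ and $v\cdot\alpha\le 0$ for $\alpha\in\Phi_I^+$. This forces $v\cdot\alpha=0$ for all $\alpha\in\Phi_I$, so $L(P_I)$ lies in the sphere of $V_I^\perp$.

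\begin{itemize}
\item \emph{Roots in $\Phi_I$.} By \cref{lem:unboundeddirection}, an irreducible $C$ with $L(C)=L(P_I)$ is both $\xi$- and $\iota(\xi)$-bounded for every family $\xi$ whose normal lies in $\Phi_I$. Thinness then places $C$ in a single minimal layer $k_\alpha<v\cdot\alpha<k_\alpha+1$ for each $\alpha\in\Phi_I^+$. Equivalently, the orthogonal projection $\pi_I(C)$ to $V_I$ lies in a single tile $\tau_I(C)$ of the $\Phi_I$-tessellation of $V_I$.
\item \emph{Roots outside $\Phi_I$.} For every other $\alpha$, $C$ is bounded on exactly one side, and which side is dictated by the cell $L(P_I)$ via \cref{lem:irred-form}.
\end{itemize}

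\emph{Key claim: $C\sim D$ if and only if $\tau_I(C)=\tau_I(D)$.}

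Necessity is immediate. If $C\doublecap D\neq\emptyset$, an interior point projects into both $\tau_I(C)$ and $\tau_I(D)$, and distinct tiles are disjoint.

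Sufficiency is the step I expect to be the main obstacle. I would argue as follows.
\begin{enumerate}
\item Pick $x\in C$, $y\in D$ with the same image in the open tile $\tau_I(C)$, and a vector $u$ in the relative interior of $\cone(P_I)\subset V_I^\perp$.
\item Translating along $u$ does not change projections to $V_I$, so the $\Phi_I$-constraints remain satisfied by small open neighborhoods of $x+su$ and $y+su$.
\item For each $\alpha\notin\Phi_I$, the bounding half-space of $C$ and of $D$ lies on the same side, with $u\cdot\alpha$ strictly of the correct sign. Hence for $s\gg 0$ all the finitely many remaining inequalities hold simultaneously near $x+su$ and near $y+su$.
\item Since $x+su$ and $y+su$ differ only in the $V_I^\perp$-component by a bounded amount, enlarging $s$ further produces a common open set in $C\cap D$.
\end{enumerate}
This step could alternatively be phrased with the canonical translation $t$ and \cref{lem:translation-inside}, noting that $t$ translates by a vector in $V_I^\perp$. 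With $C\doublecap D\ne\emptyset$ and $L(C)=L(D)$, commensurability follows from the definition.

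\emph{Conclusion.} $\Waff[I]$ is generated by reflections in $H_{\alpha,k}$ with $\alpha\in\Phi_I$. These reflections fix $V_I^\perp$ pointwise in direction, so they preserve $L(P_I)$, preserve $\Omega$, and hence preserve irreducibility. They also commute with $\pi_I$ in the sense that $\tau_I(C\rho)=\tau_I(C)\rho$. Since $\Waff[I]$ acts simply transitively on the tiles of the $\Phi_I$-tessellation of $V_I$, there is a unique $\rho$ with $\tau_I(C)\rho=\tau_I(D)$. By the key claim, this $\rho$ is exactly the unique element with $C\rho\sim D$.
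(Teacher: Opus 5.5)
Your proposal is correct and follows essentially the same route as the paper: reduce to $w=1$, observe that an irreducible with limit set $L(P_I)$ projects orthogonally into the closure of a single tile of the $\Phi_I$-tessellation of $V_I$, and use simple transitivity of $\Waff[I]$ on those tiles. Your key claim also proves two points the paper only asserts: that equal projected tiles give $C\rho\doublecap D\neq\emptyset$, and that commensurability forces equal tiles, which is what makes $\rho$ unique. (Your step~3 works for any base point lying over the open tile, so you do not need to choose $x$ and $y$ separately.)
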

\begin{proof}
  We prove this in the case that $w$ is trivial; the general case
  follows. Let $T_C$ and $T_D$ be the orthogonal projections of $C$
  and $D$ to $V_I$. From the assumption that $L(C) = L(D) = L(P_I)$ we
  know that $T_C$ and $T_D$ are bounded subsets of $V_I$, and by
  irreducibility of $C$ and $D$ it follows that $T_C$ and $T_D$ are
  each the closure of a tile of the tessellation of $V_I$. Then there
  is a unique element $\rho \in \Waff[I]$ such that $T_C \rho =
  T_D$. It follows that $C \rho \cap D$ has nonempty interior. Since
  $L(C \rho) = L(C) = L(D)$, we conclude that $C \rho$ is
  commensurable with $D$.
\end{proof}
\begin{corollary} \label{cor:germclassification}
  For any $r\geq 0$ we have
  \[
    \Gamma^r = \bigcup_{\substack{I\subset \set{\alpha_1,\dotsc,
          \alpha_n}\\ \dim(V_I) = n-r}} \left(
      \bigcup_{[w]\in W/W_I} \left( \bigcup_{\rho \in \Waff[I]}
        [P_I \rho w] \right) \right)
  \]
  Moreover, each of these unions is disjoint.
\end{corollary}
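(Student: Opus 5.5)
We prove \cref{cor:germclassification} by showing two things: every germ of rank $r$ is represented by some $P_I\rho w$ with $\dim(V_I)=n-r$, $[w]\in W/W_I$ and $\rho\in\Waff[I]$, and different index triples $(I,[w],\rho)$ give different germs.

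\textbf{Surjectivity.} Let $\gamma\in\Gamma^r$ and choose an irreducible representative $D$. By \cref{lem:irred-form}, $L(D)$ is a closed cell of the Coxeter complex $L(\Delta)$. We have already observed that every closed cell has the form $L(P_I)w$ for some $I$ and some $w\in W$.

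\begin{itemize}
\item By \cref{lem:limitdim}, the rank of $D$ is $\dim L(D)+1$.
\item The cell $L(P_I)$ has dimension $n-\abs{I}-1$, since $P_I$ has rank $n-\abs{I}$.
\item Because the simple roots are linearly independent, $\abs{I}=\dim(V_I)$.
\end{itemize}

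So $\rk(D)=r$ forces $\dim(V_I)=n-r$. Now set $C=P_Iw$. This is irreducible, since $w$ is an isometry of $\Delta$, and $L(C)=L(P_I)w=L(D)$. \cref{prop:irreds-with-same-limit} then gives $\rho'\in\Waff[I]$ with $Cw^{-1}\rho' w=P_I\rho'w$ commensurable with $D$. Hence $\gamma=[P_I\rho' w]$. Replacing $w$ by another coset representative $w_0w$ with $w_0\in W_I$ only changes $\rho'$ within $\Waff[I]$, since $W_I\le\Waff[I]$ normalizes it, so the union may be indexed by cosets. I will need to check the side convention ($W/W_I$ versus $W_I\backslash W$) against the right action.

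\textbf{Disjointness.} Suppose $[P_I\rho w]=[P_J\rho' w']$. Commensurable sets have equal limit sets. Since $\Waff[I]$ consists of affine maps whose linear parts lie in $W_I$, and $W_I$ fixes $L(P_I)$ (which lies in the orthogonal complement of $V_I$), we get
\[
L(P_I\rho w)=L(P_I)w .
\]
Thus $L(P_I)w=L(P_J)w'$.

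Two facts from the Coxeter complex now apply. The simplices $L(P_I)$ are the faces of the fundamental chamber, and every cell lies in the $W$-orbit of exactly one such face; this gives $I=J$. The stabilizer of the face $L(P_I)$ in $W$ is exactly the parabolic subgroup $W_I$; this gives $[w]=[w']$.

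We may then take $w=w'$, adjusting $\rho'$ by an element of $W_I$ as above. Then $P_I\rho w$ and $P_I\rho' w$ are commensurable with common limit set $L(P_I)w$. The uniqueness clause of \cref{prop:irreds-with-same-limit}, applied with $C=P_I\rho w$ and $D=P_I\rho' w$, forces $\rho=\rho'$.

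The main obstacle is this last uniqueness step, which reduces to the projection argument. The orthogonal projection of $P_I$ to $V_I$ is the closure of the fundamental tile of the $\Phi_I$-tessellation of $V_I$. Commensurable sets have projections sharing interior, and $\Waff[I]$ acts simply transitively on tiles of $V_I$, so $\rho=\rho'$. I would also verify that the conjugations by $w$ are consistent with right actions throughout.
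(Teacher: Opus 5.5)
Your proposal is correct and is essentially the paper's proof. For surjectivity you write $L(D)=L(P_I)w$ and apply \cref{prop:irreds-with-same-limit} to $C=P_Iw$; for disjointness you read off $I$ and the coset of $w$ from the Coxeter complex and then invoke the uniqueness clause of that proposition, exactly as the paper does. Your side-convention concern resolves as you suspect: with right actions the relevant cosets are $W_Iw$, which is what the paper's condition $w_1w_2^{-1}\in W_I$ encodes, and your final projection paragraph just re-proves the uniqueness clause already contained in \cref{prop:irreds-with-same-limit}.
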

\begin{proof}
  Suppose $P$ is an irreducible convex polyhedral set. Since the limit
  set of an irreducible is a simplex in the Coxeter complex, we know
  $L(P) = L(P_I) w$ for some $I\subset S$ and $w\in W$. By
  \cref{prop:irreds-with-same-limit} we conclude there is some
  $\rho \in \Waff[I]$ such that $P \sim P_I \rho w$.  To see that the
  unions are disjoint, suppose $Q_1\sim Q_2$ where
  $Q_1 = P_I \rho_1 w_1$ and $Q_2 = P_J \rho_2 w_2$ for
  $w_1, w_2 \in W$, $I,J\subset \set{\alpha_1,\dotsc, \alpha_n}$,
  $\rho_1 \in \Waff[I]$, and $\rho_2 \in \Waff[J]$. Then
  $L(P_I) w_1 = L(Q_1) = L(Q_2) = L(P_J) w_2$, and so it follows from
  the structure of the Coxeter complex that $I = J$ and
  $w_1 w_2^{-1} \in W_I$. Disjointness now follows from the uniqueness
  statement in \cref{prop:irreds-with-same-limit}.
\end{proof}

Since each simplex of the Coxeter complex is identified with
$L(P_I) w$ for some $w\in W$ and
$I\subset \set{\alpha_1,\dotsc,\alpha_n}$, and the stabilizer of
$L(P_I) w$ under the action of $\Waff[]$ is equal to
$w^{-1} \Waff[I] w$, we have the following classification of germs in
terms of the action of the affine Weyl group on the Coxeter complex
$L(\Delta)$:

\begin{corollary} \label{cor:germclassification2}
  The set of germs can be written as a disjoint union
  \[
    \Gamma = \bigcup_{\sigma:= wL(P_I)\subset L(\Delta)} \left(
      \bigcup_{\rho \in stab_{\Waff[]}(\sigma)}  [P_I \rho w] \right)
  \]
\end{corollary}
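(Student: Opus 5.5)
The plan is to reindex the disjoint union of \cref{cor:germclassification} by the simplices of the Coxeter complex instead of by pairs $(I,[w])$.

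First, I would establish the bijection between the index sets. Every closed simplex $\sigma$ of $L(\Delta)$ can be written as $\sigma = L(P_I)w$ for some subset $I\subset\set{\alpha_1,\dotsc,\alpha_n}$ and some $w\in W$. The type $I$ is uniquely determined by $\sigma$. The coset $W_I w$ is also determined by $\sigma$, because $W_I$ is exactly the stabilizer in $W$ of $L(P_I)$; this is the standard fact that stabilizers of simplices in the Coxeter complex are standard parabolic subgroups. So the map $(I, W_I w)\mapsto L(P_I)w$ is a bijection from the index set of \cref{cor:germclassification}, taken over all $r$, onto the set of simplices of $L(\Delta)$. Taking the union over all $r$ turns $\Gamma = \bigsqcup_r \Gamma^r$ into the outer union over $\sigma$. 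Disjointness of the outer union follows from the disjointness statement of \cref{cor:germclassification}, or directly from the fact that commensurable sets have equal limit sets.

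Second, I would fix $\sigma = L(P_I)w$ and identify the inner union. The germs with limit set $\sigma$ are exactly the classes $[P_I\rho w]$ for $\rho\in \Waff[I]$, with distinct $\rho$ giving distinct germs; this is the uniqueness statement in \cref{prop:irreds-with-same-limit} with $C = P_I w$. The remaining task is to match the index set $\Waff[I]$ with the stabilizer of $\sigma$, using the remark preceding the corollary that $stab_{\Waff[]}(\sigma) = w^{-1}\Waff[I]w$. Conjugation $\rho\mapsto w^{-1}\rho w$ is a bijection $\Waff[I]\to stab_{\Waff[]}(\sigma)$. Moreover $P_I w\cdot (w^{-1}\rho w) = P_I\rho w$, so indexing by the stabilizer lists the same germs.

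I expect the main obstacle to be the bookkeeping of conventions rather than any real difficulty. The statement writes $wL(P_I)$ with $w$ on the left, while the action is on the right, and it writes $[P_I\rho w]$ with $\rho$ ranging over the stabilizer rather than over $\Waff[I]$. I would read the formula as the set $\set{[P_I w\rho'] : \rho'\in stab(\sigma)} = \set{[P_I\rho w] : \rho\in \Waff[I]}$ and verify that the two descriptions agree, using the conjugation identity above. I would also check the claim that the stabilizer of $\sigma$ acting on $L(\Delta)$ through differentials is $w^{-1}\Waff[I]w$. Here $\Waff[I]$ is generated by reflections in hyperplanes $H_{\alpha,k}$ with $\alpha\in\Phi_I$, and the linear parts of these reflections fix $L(P_I)$ pointwise, since $L(P_I)$ lies in $V_I^\perp$ intersected with the appropriate chamber.
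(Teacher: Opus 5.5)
Your route is the paper's: reindex \cref{cor:germclassification} by simplices $\sigma = L(P_I)w$, then pass from $\Waff[I]$ to $stab_{\Waff[]}(\sigma)$ via the identity $stab_{\Waff[]}(\sigma) = w^{-1}\Waff[I]w$. The problem is that this identity, which you propose to verify, is false. Your verification only gives the inclusion $w^{-1}\Waff[I]w\subseteq stab_{\Waff[]}(\sigma)$.

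As you say yourself, $\Waff[]$ acts on $L(\Delta)$ through differentials. Hence every translation in $\Waff[]$ fixes every cell, and $stab_{\Waff[]}(\sigma)$ is the full preimage of $w^{-1}W_Iw$ under $\Waff[]\to W$. The identity you need holds in $W$, which is your first step, but not in $\Waff[]$. The translations in $w^{-1}\Waff[I]w$ have vectors in $V_I w$, so the inclusion is strict whenever $\sigma\neq\emptyset$.

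A chamber ($I=\emptyset$) shows this concretely. There $\Waff[\emptyset]$ is trivial, while the stabilizer is the whole translation subgroup of $\Waff[]$; the paper's remark that top-dimensional simplices have trivial stabilizer is the same slip. Moreover, every translate of $P_\emptyset w$ by such a translation is commensurable with $P_\emptyset w$. So if $\rho$ literally ranges over the stabilizer, the inner union produces the right set of germs but repeats each germ infinitely often, and the disjointness assertion fails.

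The repair is to take the inner index set to be $w^{-1}\Waff[I]w$ itself. It has an intrinsic description: the subgroup of $\Waff[]$ generated by the reflections in those $H\in\Omega$ with $\sigma\subset L(H)$. This holds because $L(P_I)$ spans $V_I^\perp$, so $L(P_I)\subset L(H_{\beta,k})$ exactly when $\beta\in\Phi_I$.

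With the statement read this way, your second step is already a complete proof:
\begin{itemize}
\item the germs with limit set $\sigma$ are exactly the classes $[P_I\rho w]$ for $\rho\in\Waff[I]$;
\item these are pairwise distinct by the uniqueness in \cref{prop:irreds-with-same-limit};
\item conjugation $\rho\mapsto w^{-1}\rho w$ transports this indexing to $w^{-1}\Waff[I]w$.
\end{itemize}
Your bookkeeping with right cosets $W_Iw$ and the right action is correct.
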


For example, each top-dimensional simplex of $L(\Delta)$ has trivial
stabilizer under the action of $\Waff[]$, so the top-rank germs
$\Gamma^n$ are in bijection with top-dimensional simplices of
$L(\Delta)$, which are in bijection with the finite Coxeter group
$W$. At the other extreme, the empty simplex is trivially stabilized by
all of $\Waff[]$, and the fact that the germs of rank $0$ are in
bijection with $\Waff[]$ is simply a reflection of the fact that the
affine Weyl group acts simply transitively on tiles.

Germs of rank $r$ are organized into parallel families, one family for
each cell $L(P_I) w$ in the Coxeter complex, where
$I\subset \set{\alpha_1,\dotsc,\alpha_n}$ is a subset of size
$n-r$. The number of such families associated to $I$ is
$\abs{W} / \abs{W_I}$ by the orbit-stabilizer theorem. The argument in
the proof of \cref{prop:irreds-with-same-limit} shows that
an irreducible polyhedral convex set with limit set $L(P_I)w$ has
cross-sections isometric to a tile in the induced tessellation of
$V_I$. This shows that o in general that there are irreducible convex polyhedral sets
that are not commensurable to isometric irreducibles.

\bold{Canonical translations.} Consider a given edge-ray $r$ of the
cone $P_\emptyset$. Then $r$ lies transverse to a unique facet of
$P_\emptyset$ lying in a hyperplane $H_{\alpha_i,0}$ for some
$\alpha_i$. The translation $t_r$ along $r$ is given by the
fundamental coweight $\omega^\vee_i$, determined by
$\omega^\vee_i \cdot \alpha_j = \delta_{ij}$. Since every vertex in
$L(\Delta)$ is obtained from a vertex of the base simplex by applying
an element of the finite Weyl group, every edge-ray translation
translates by a coweight $\omega^\vee_i w$ for some $w\in W$ and
fundamental coweight $\omega^\vee_i$.

\bold{Irreducible root systems.} Every crystallographic root system is
a product of irreducible root systems, which are classified by type
$X_n$ belong to one of the families $A_n$, $B_n$, $C_n$ or $D_n$, or
one of five exceptional cases. Suppose $X_n$ is irreducible with
simple roots $\a_1, \cdots, \a_n$ and $n\times n$ Cartan matrix
$$\cC = \cC(X_n) = \left[ \frac{2\a_i\cdot\a_j}{\a_j\cdot\a_j} \right]
= [c_{ij}]\quad\hbox{with inverse}\quad \cC^{-1} = D = [d_{ij}].$$
The finite Weyl group is $W(X_n)$, and the associated affine Weyl
group is $W(X_n^+)$, where $X_n^+$ is the affine root system obtained
from the associated affine Kac-Moody algebra.

There is a unique highest root $\theta \in X_n$, and the fundamental
tile is given by
\[
  \tau_0 = \set{v\in \R^n \suchthat 0< v\cdot \alpha_i \text{ for }
    i=1,\dotsc,n \text{ and } v\cdot \theta < 1}.
\]
In this case, the closure of $\tau_0$ is a simplex whose vertices are
$0$ and multiples of $\lambda_1,\dotsc, \lambda_n$, the fundamental
weights of $X_n$ given by
$$\lam_j = \sum_{i=1}^n d_{ij} \a_i, \quad\hbox{for  } 1\leq j\leq
n.$$

\subsection{Equilateral triangle tessellation of $\R^2$ from root system of type $A_2$}

The irreducible root system $A_2$ has two simple roots $\alpha_1$ and
$\alpha_2$ of length $\sqrt{2}$ at angle $2\pi/3$.  Associated to
$A_2$ is the tessellation $\Delta$ of $\R^2$ by equilateral triangles.
The associated affine Weyl group $W(A_2^+)$ is the triangle group,
$T(3,3,3)$, generated by the reflections in the three sides satisfying
$|r_1 r_2| = 3$, $|r_1 r_3| = 3$, $|r_2 r_3| = 3 $ so each pair
generates a permutation group, $S_3$.

The spherical complex $L(\Delta)$ on the circle at infinity consists of six
vertices, one antipodal pair for each of the three families of
parallel lines, and six $1$-cells. Those $1$-cells are the limit sets
of the six alcoves for the action of the finite Weyl group $S_3$, each
of which is an irreducible convex polyhedral set of rank $2$ in
$\Delta$. By \cref{cor:germclassification2}, each vertex in $L(\Delta)$
corresponds to a $D_\infty$-family of rank $1$ germs, each of which
can be represented by a beam in the direction of a ray, bounded by a
pair of adjacent parallel lines.

\begin{figure}[htb] 
  \centering
  \begin{subfigure}[b]{0.45\textwidth}
      \begin{tikzpicture}
        \node[anchor=south west, inner sep=0] (image) at (0,0)
        {\includegraphics[width=\textwidth]{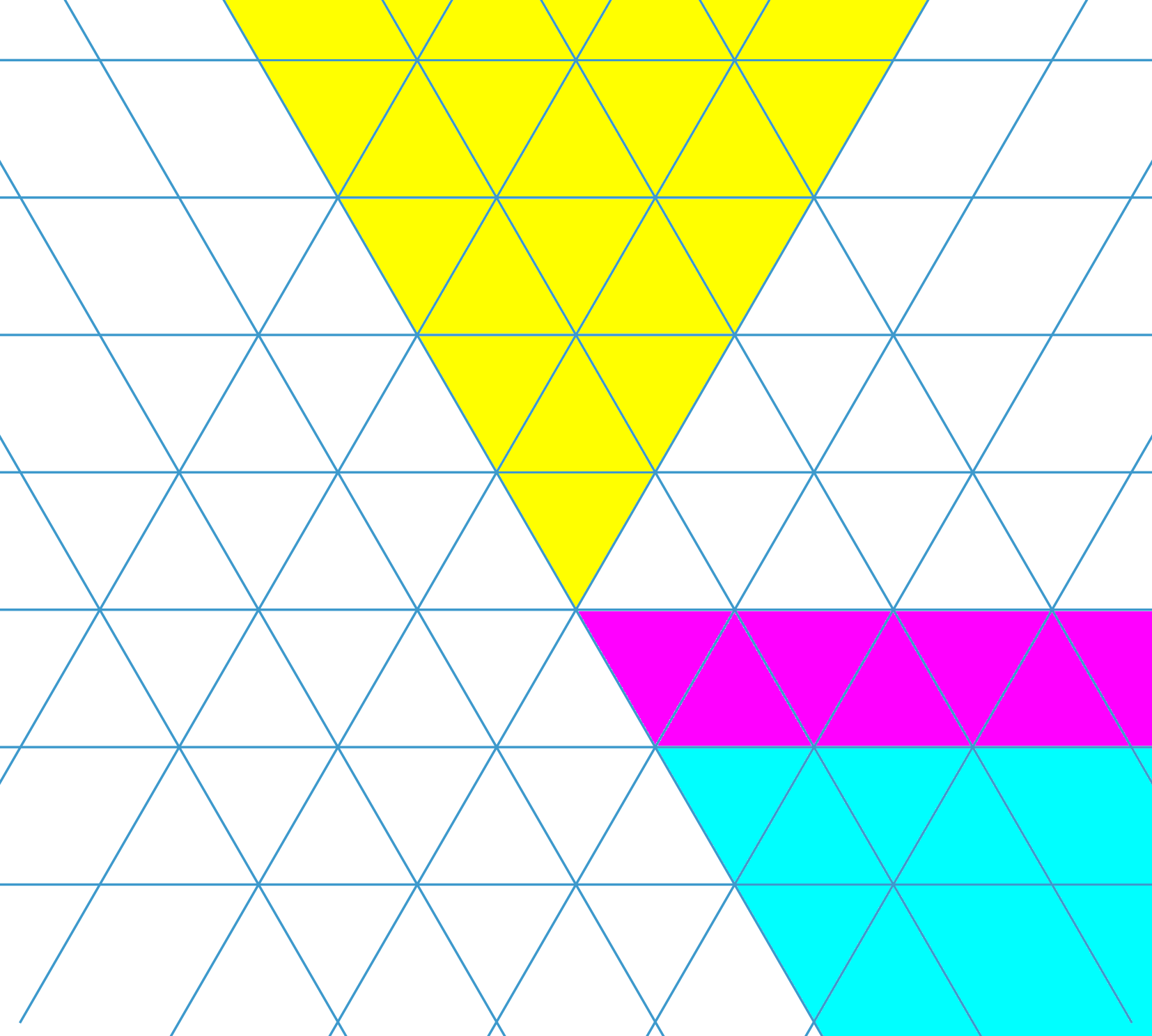}};
        
        \begin{scope}[x={(image.south east)},y={(image.north west)}]

          \node[color=black] at (0.5, 0.76) {$P_{\emptyset}$};
          \node[color=black] at (0.72, 0.35) {$P_{\{1\}}w$};
          \node[color=black] at (0.83, 0.2)
          {$P_{\emptyset}w+\omega_1^\vee w$};
          
          \node[color=black] at (0.755, 0.565) {$\alpha_1$};
          \node[color=black] at (0.24, 0.565) {$\alpha_2$};
          \node[color=black] at (0.62, 0.58) {$\omega^\vee_1$};
          \draw[-latex, thick] (0.5, 0.415) -- (0.71, 0.545); 
          \draw[-latex, thick] (0.5, 0.415) -- (0.29, 0.545); 
          \draw[-latex, thick,black] (0.5, 0.415) -- (0.57, 0.545); 

        \end{scope}
      \end{tikzpicture}
    \end{subfigure}
    \qquad
    \begin{subfigure}[b]{0.45\textwidth}
      \begin{tikzpicture}
        \node[anchor=south west, inner sep=0] (image) at (0,0)
        {\includegraphics[width=\textwidth]{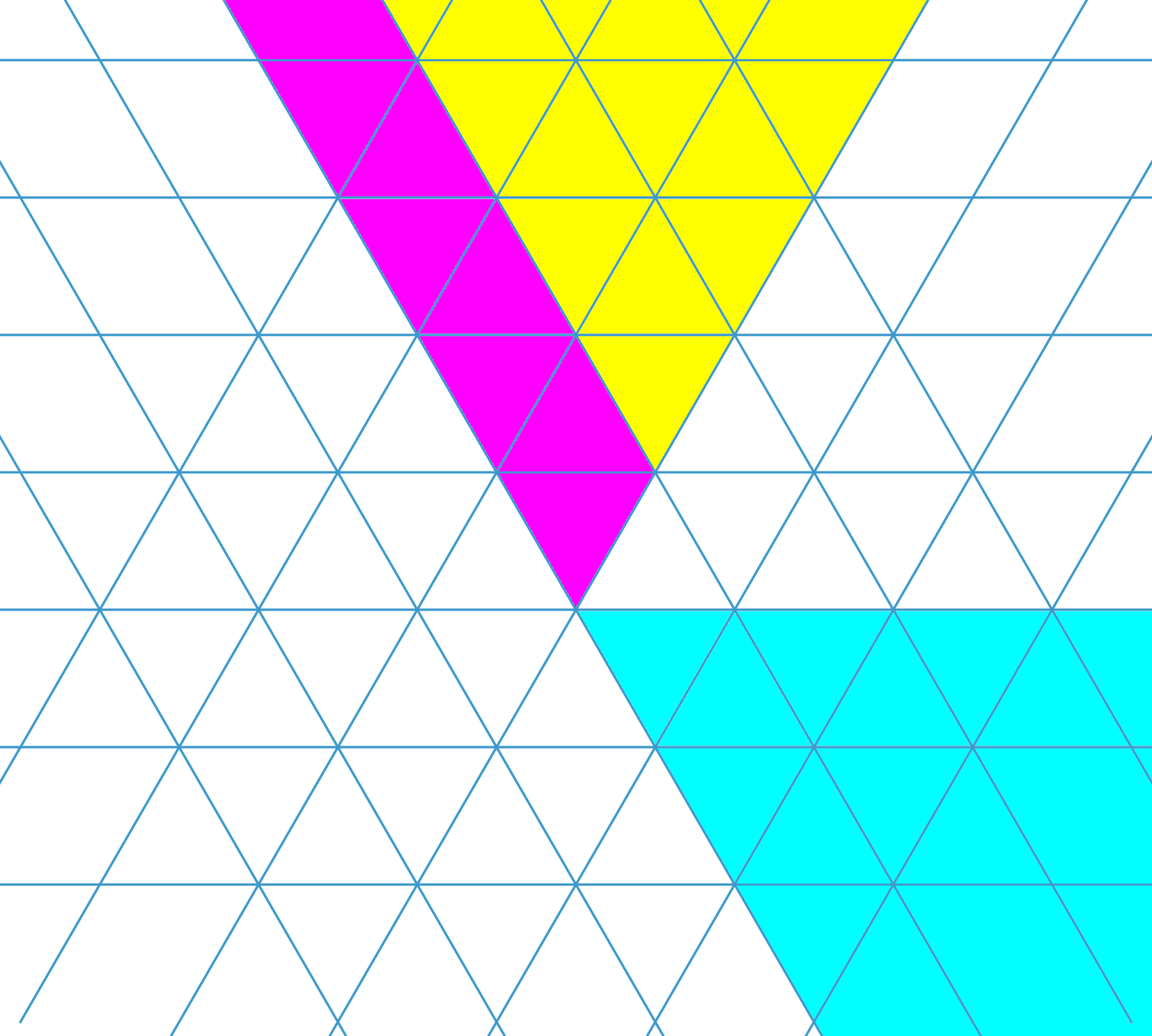}};
        
        \begin{scope}[x={(image.south east)},y={(image.north west)}]
          
          \node[color=black] at (0.6, 0.86) {$P_{\emptyset} + \omega_1^\vee$};
          \node[color=black] at (0.425, 0.72) {$P_{\{1\}}$};
          \node[color=black] at (0.78, 0.23) {$P_{\emptyset}w$};

          \node[color=black] at (0.755, 0.565) {$\alpha_1$};
          \node[color=black] at (0.24, 0.565) {$\alpha_2$};
          \node[color=black] at (0.62, 0.58) {$\omega^\vee_1$};
          \draw[-latex, thick] (0.5, 0.415) -- (0.71, 0.545); 
          \draw[-latex, thick] (0.5, 0.415) -- (0.29, 0.545); 
          \draw[-latex, thick,black] (0.5, 0.415) -- (0.57, 0.545); 
        \end{scope}
      \end{tikzpicture}
      \end{subfigure}
 \caption{\label{fig:PIA2} The domain and codomain finite
  decompositions of the plane for the piecewise isometry $f_3\in C_2^+
  - G_1$ when $w\in W(A_2)$ is rotation by $2\pi/3$}
\end{figure}

We use this example to illustrate examples of piecewise isometries
exhibiting a variety of placements in the normal series described in
\cref{sec:sequences}. 
\begin{itemize}
  \item An example of an element $f_1 \in G_2 - C_2$ is a piecewise isometry which
isometrically exchanges two alcoves of the finite Weyl group,
$P_\emptyset$ and $P_\emptyset w$, and is the identity outside of
$P_\emptyset \cup P_\emptyset w$.
\item An example of an element 
$f_2\in C_2 - C_2^+$ is the piecewise isometry which isometrically sends
$P_\emptyset$ to itself by an internal reflection, and is the identity
outside of $P_\emptyset$.
\item An example of an element $f_3 \in C_2^+ - G_1$, shown in 
  \cref{fig:PIA2}, is the piecewise isometry which, for a fixed
  nontrivial element $w\in W$,
\begin{enumerate}
\item translates $P_\emptyset$ into itself by the fundamental coweight
  $\omega^\vee_1$,
\item translates $P_\emptyset w -  P_{\{1\}}w$ isometrically onto
  $P_\emptyset w$ by $- \omega^\vee_1 w$,
\item isometrically maps $P_{\{1\}} w$ to $P_{\{1\}}$ via
  $w^{-1}$, and
\item is the identity outside of
  $P_\emptyset \cup P_{\emptyset} w$.
\end{enumerate}
\end{itemize}

\subsection{Tetrahedron tessellation of $\R^3$ from the affine Weyl group of type $A_3$}

Next we examine the tessellation of $\R^3$ by a tetrahedron which is
the fundamental domain for the affine Weyl group, $W(A_3^+)$.  In this
example, the fundamental tile $\tau_0$ is a tetrahedron with one pair
of opposite edges of length $1$ and the other four edges of length
$\sqrt{3}/2$. There are $24$ copies of $\tau_0$ meeting at each vertex
of the tessellation, corresponding to the finite subgroup, $S_4$, the
Weyl group $W(A_3)$. The union of the $24$ copies meeting at the
origin forms a rhombic dodecahedron, whose boundary is a polygonal
complex isomorphic to the Coxeter complex $L(\Delta)$. See 
\cref{fig:RDandA3Sphere}.

\begin{figure}[htb]
  \centering
  \includegraphics[scale=0.7]{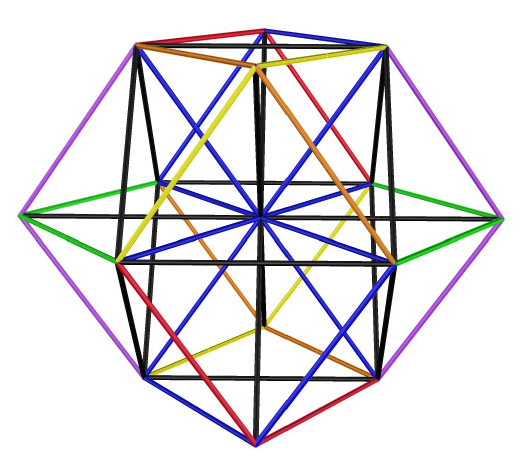}
  \qquad
  \includegraphics[scale=0.6]{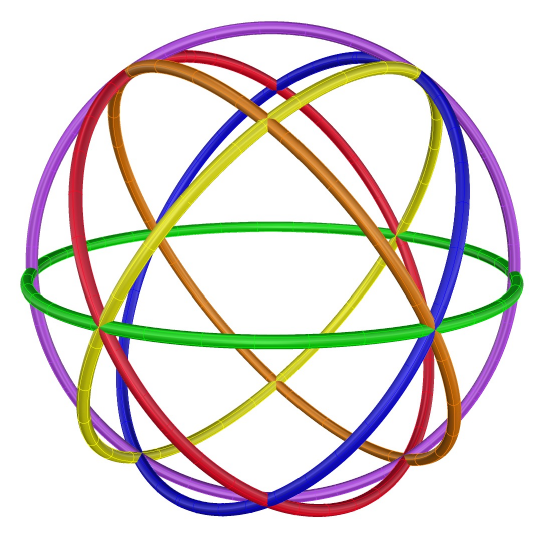}
\caption{\label{fig:RDandA3Sphere} Affine $A_3$ rhombic dodecahedron
  of $24$ tetrahedral tiles, and the associated spherical complex at infinity}
\end{figure}

As in Corollary \cref{cor:germclassification2}, there are three
$W(A_3^+)$-orbits of rank-2 germs, corresponding to the standard
convex polyhedral sets $P_{1}$, $P_{2}$, and $P_{3}$. The sets $P_{1}$
and $P_{3}$ are isometric by the order-2 isometry of $P_\emptyset$
given by the linear map fixing $\alpha_2$ and exchanging roots
$\alpha_1$ and $\alpha_3$. The set $P_{2}$ is isometric to neither, as
its recession cone has angle $\pi/2$, while the others have angle
$\pi/3$.

There are similarly three $W(A_3^+)$-orbits of rank-1 germs,
corresponding to $P_{1,2}$, $P_{1,3}$, and $P_{2,3}$. The sets
$P_{1,2}$ and $P_{2,3}$ are isometric, and look like beams whose
cross-section are equilateral triangles, corresponding to a tile in
the tessellation of $V_{1,2}$ which has type $A_2$. The set $P_{1,3}$
is a beam whose cross-section is a square, corresponding to a tile
in the tessellation of $V_{1,3}$ which has type $A_1\times A_1$.

\begin{figure}[htb]
  \centering
  \includegraphics[scale=0.7]{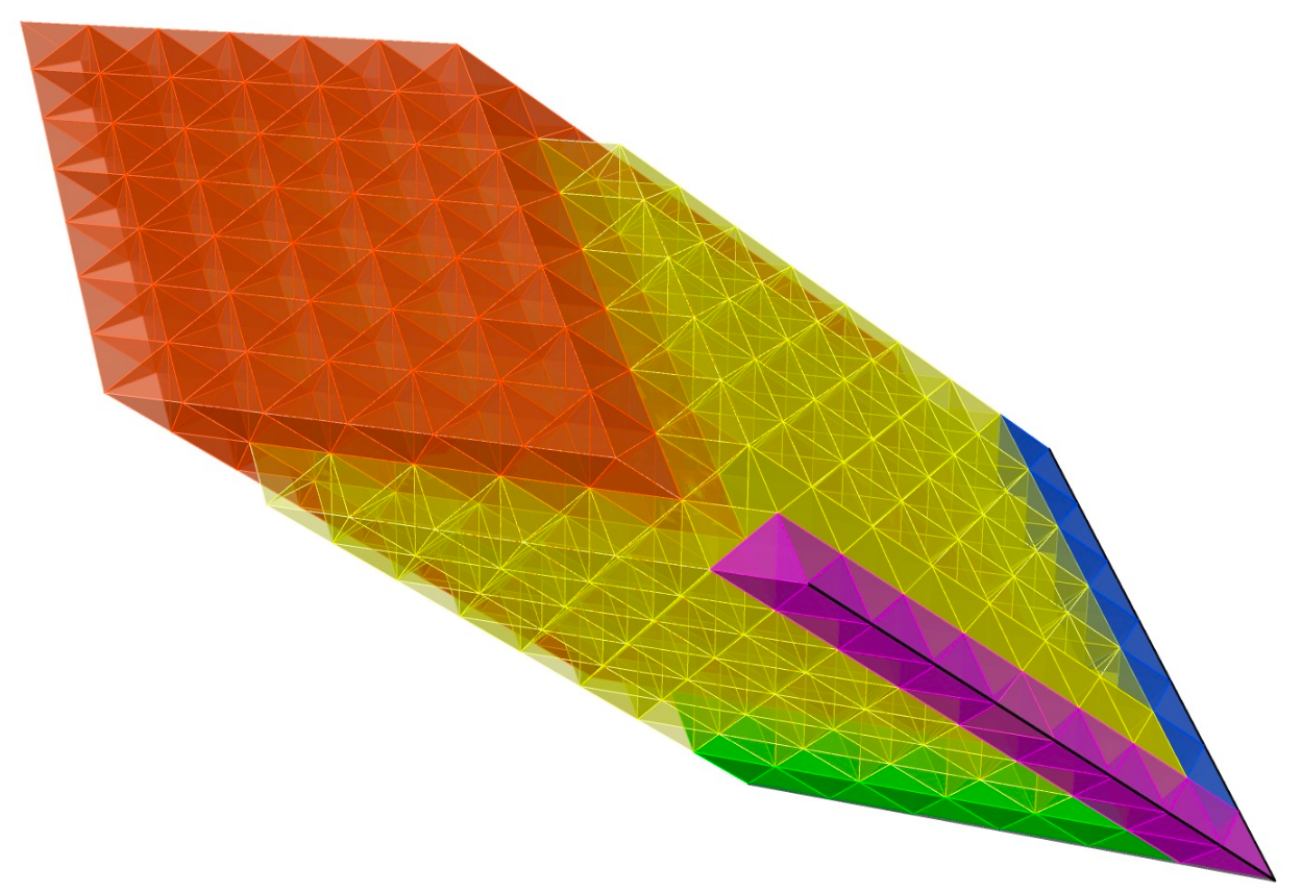}
\caption{\label{fig:A3alcoves} A rank-3 alcove in orange, with a
  translate in yellow. The rank-1 alcoves along the edges of the
  translate are colored purple, blue, and green.}
\end{figure}

\subsection{2D and 3D kagome tessellations}

Our results apply to tessellations that are not directly induced from
affine Weyl groups. Two such examples are the 2-dimensional kagome
tessellation, also known as the trihexagonal tiling, and its
3-dimensional generalization, also known as a quarter cubic
honeycomb. Both the 2- and 3-dimensional kagome tessellations may be
constructed in the following way. Consider a regular Euclidean simplex
$T$ in $\R^n$ where $n=2$ or $n=3$, so that $T$ is either a triangle
or tetrahedron. Let $H_0, H_1,\dotsc, H_n$ be the hyperplanes bounding
its faces. For each $H_i$, form a family of parallel hyperplanes
$\Omega_i$ regularly spaced at distance $\frac23$ times the height of
$T$. The collection of all such hyperplanes
$\Omega = \Omega_0\cup\dotsb \cup \Omega_n$ determines a tessellation
of $\R^n$, called the {\em kagome tessellation}.

\begin{figure}[htb]
  \centering
\centerline{\includegraphics[scale=0.4]{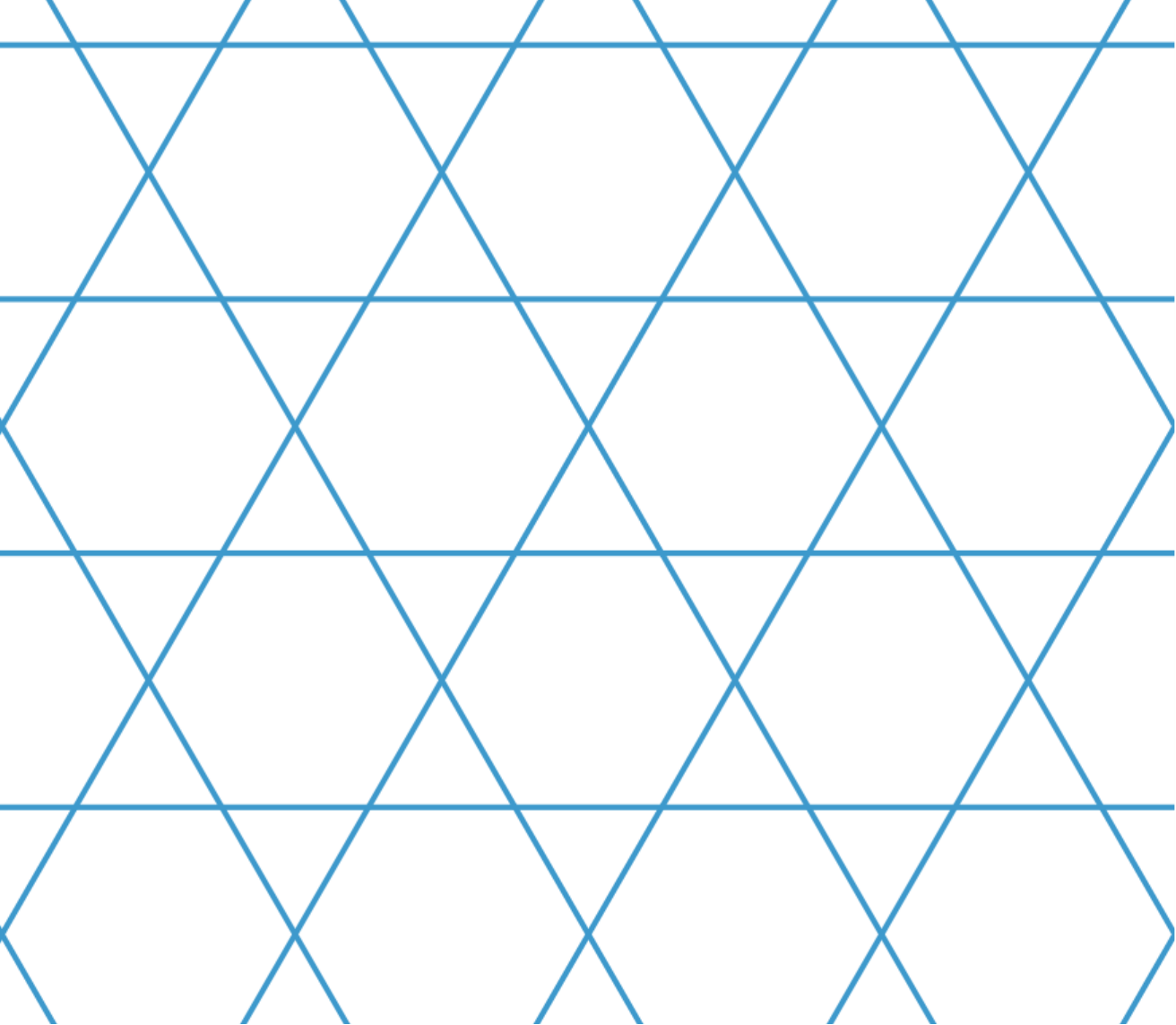}}
\caption{\label{fig:2dkagome} 2D kagome tessellation}
\end{figure}

\cref{fig:2dkagome} shows the 2D kagome tessellation $K_2$ which is made
from two kinds of tiles, equilateral triangles and regular
hexagons. The 2D kagome tessellation can be refined to the affine
$A_2^+$ tessellation by adding a line halfway between each pair of
adjacent parallel lines. In particular, the cell structure on the
sphere at infinity is the same as that of the $A_2^+$ tessellation,
and there is a similar classification of germs.

\cref{fig:3dkagome} shows the 3D kagome tessellation $K_3$ which
is made from two kinds of tiles, regular tetrahedra and truncated
tetrahedra. In contrast to the 2D kagome lattice, the 3D kagome
lattice is not related to the tessellation associated with the affine
Weyl group $W(A_3^+)$ in an obvious way. For example, the cell
structure on the sphere at infinity is significantly different, with
six triangular $2$-cells and six square $2$-cells formed by the four
circles corresponding to parallelism families of hyperplanes, which
intersect at twelve vertices.  See \cref{fig:3dkagome_sphere}.
Note that the square $2$-cells show that cells in the spherical
tessellation at infinity $L(\Delta)$ are not necessarily simplices.

\begin{figure}[htb]
  \centering
\centerline{\includegraphics[scale=0.2]{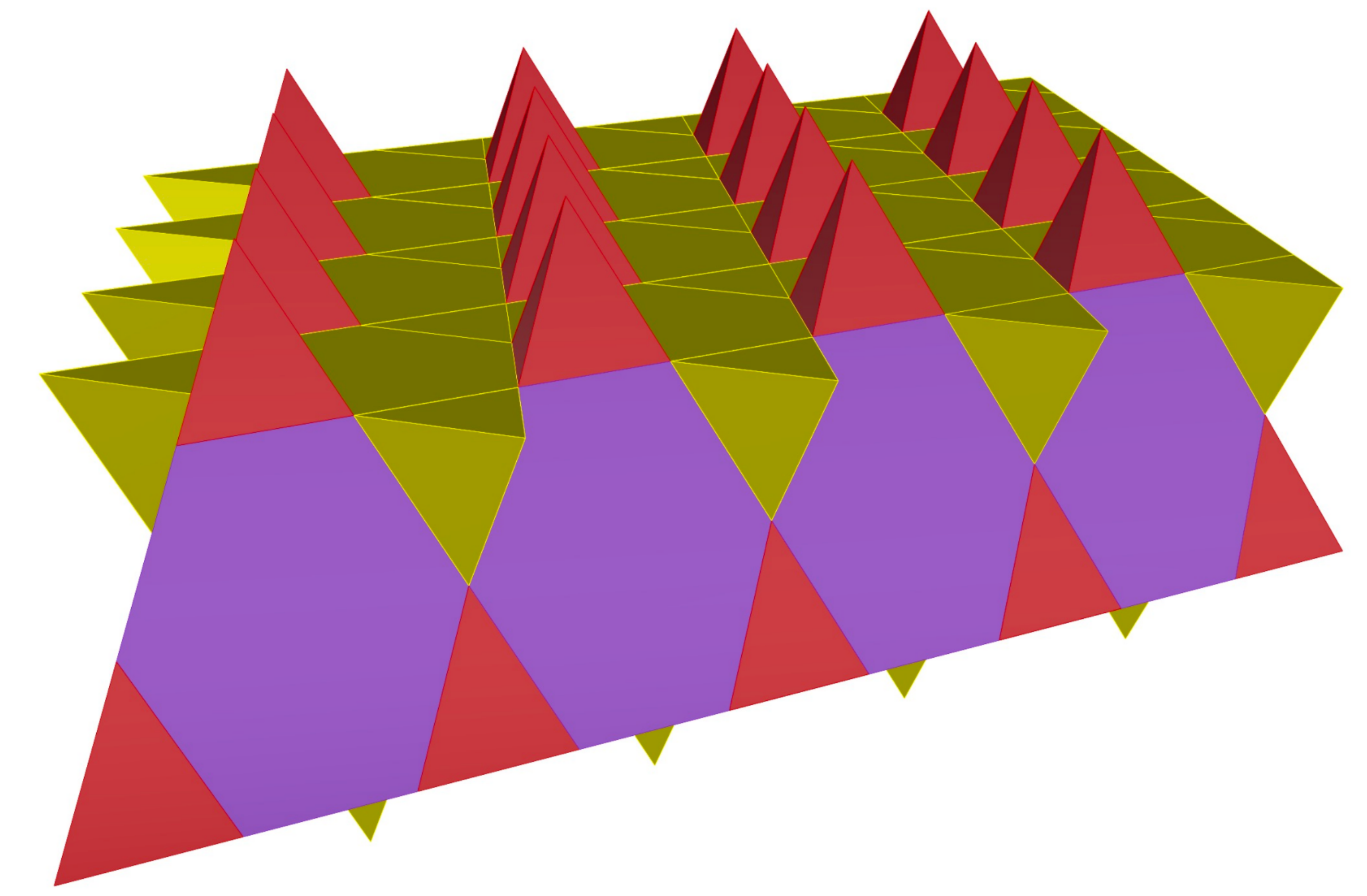}}
\caption{\label{fig:3dkagome} 3D kagome tessellation}
\end{figure}

\begin{figure}[htb]
  \centering
\centerline{\includegraphics[scale=0.5]{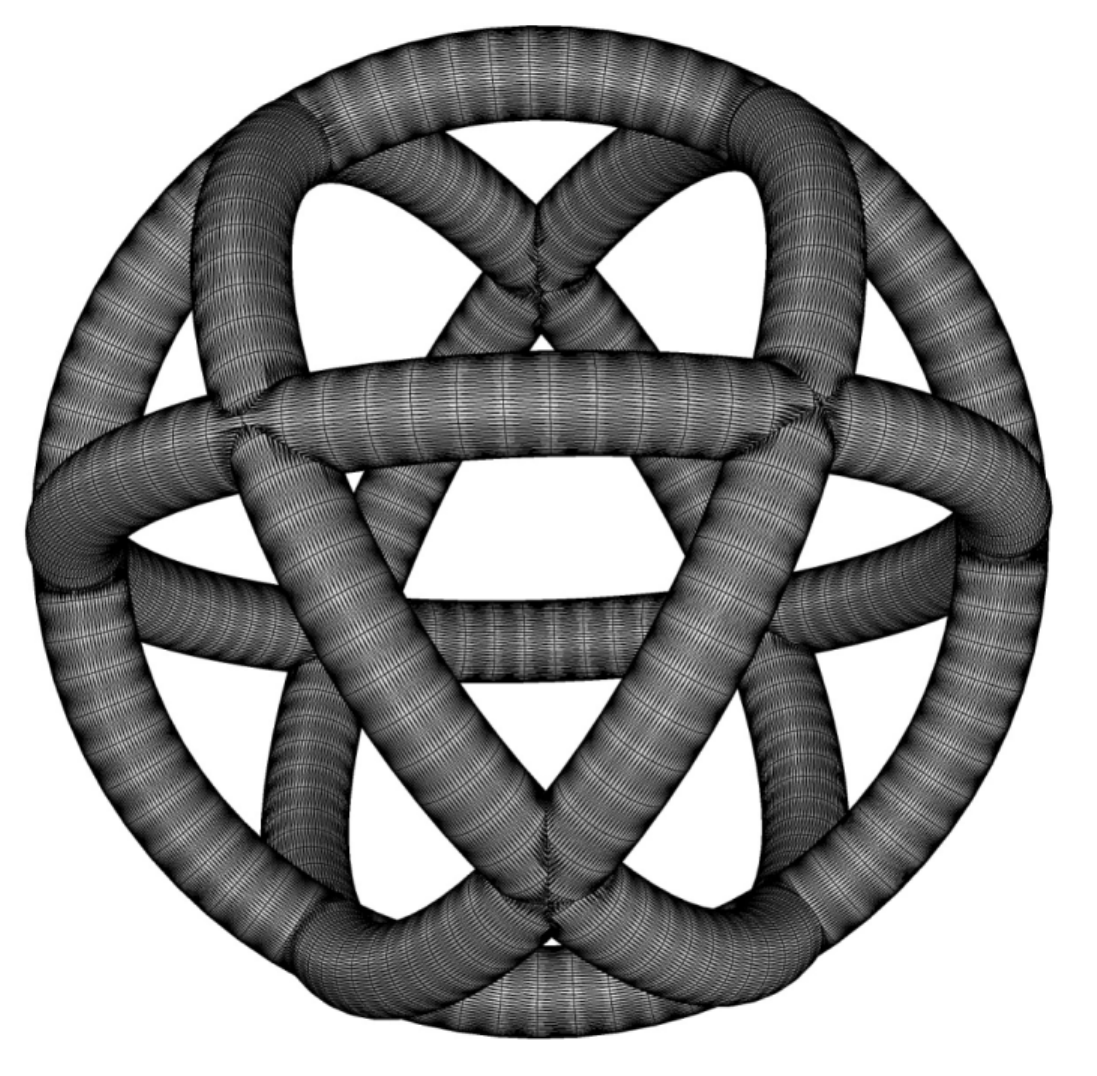}}
\caption{\label{fig:3dkagome_sphere} 3D kagome sphere at infinity}
\end{figure}

\end{document}